\documentclass[3p,11pt]{elsarticle}
\usepackage{amsmath}
\usepackage{graphicx}
\usepackage{caption} 
\usepackage{wrapfig}
\usepackage{color,soul}
\usepackage{enumitem}
\usepackage{subcaption}
\usepackage{comment}
\usepackage{amsmath}
\usepackage{times}
\usepackage{amssymb}
\usepackage{array}
\usepackage{yhmath}
\usepackage{latexsym}
\usepackage{amsthm}
\usepackage{amsfonts}
\usepackage{nomencl}
\usepackage[table]{xcolor}
\usepackage{lineno}

\makenomenclature
\usepackage{hyperref}
\usepackage{array,tabularx}
\usepackage{scalerel,stackengine}
\usepackage{graphics}
\usepackage[english]{babel}
\allowdisplaybreaks
\newtheorem{theorem}{Theorem}[section]
\newtheorem{corollary}{Corollary}[theorem]
\newtheorem{lemma}[theorem]{Lemma}

\stackMath
\numberwithin{equation}{section}
\date{}
\usepackage[utf8]{inputenc}

\begin{document}
\begin{frontmatter} 
\title{Sonic-supersonic jet flows from a straight two-dimensional nozzle with van der Waals equation of state \tnoteref{mytitlenote}} 
\author[1]{Anamika Pandey}
\author[1]{T. Raja Sekhar}
\address[1]{Department of Mathematics, Indian Institute of Technology Kharagpur, Kharagpur, West Bengal, India} 
\cortext[mycorrespondingauthor]{Corresponding author} 
\ead{trajasekhar@maths.iitkgp.ac.in} 

\begin{abstract}
This article concerns sonic-supersonic jet flows issuing from a two-dimensional straight nozzle described by the steady compressible Euler system under the van der Waals equation of state. The flow state is prescribed at the nozzle exit, while the surrounding medium is assumed to be either a vacuum or a static atmosphere with lower pressure. When the flow reaches the sonic state at the nozzle exit, the governing equations become degenerate hyperbolic and the resulting jet flow problem can be formulated as a degenerate free boundary problem. The analysis is further complicated by singular behavior near the endpoints of the nozzle exit.  By employing characteristic decompositions and suitable a priori estimates, we establish the global existence of locally Lipschitz continuous sonic-supersonic jet flows expanding into a vacuum. Moreover, in the presence of a static atmosphere with pressure lower than that at the nozzle exit, we prove the local existence of sonic-supersonic jet flows near the nozzle exit.
\end{abstract}

\begin{keyword}
   Free boundary; Jet flow; Compressible Euler system; Characteristic decomposition;  Degenerate hyperbolic BVP; Van der Waals gas.
  \MSC[] 35Q35; 35L45; 35L65; 35L67; 35L80
\end{keyword}
\end{frontmatter}

\section{Introduction}\label{Introduction}

Jet flows issuing from nozzles constitute a fundamental class of problems in compressible fluid dynamics and have long attracted considerable attention due to their importance in both practical applications and mathematical theory. Such flows arise naturally in a wide range of engineering settings, including aerospace propulsion systems, rocket engines, jet technologies and high-speed aerodynamic devices. Depending on the pressure conditions inside and outside the nozzle, the exhausting gas may undergo significant acceleration and expansion, leading to the formation of intricate flow patterns involving rarefaction waves, compression waves, shock fronts, contact discontinuities and nonlinear wave interactions. In particular, when a jet expands into a surrounding medium of lower pressure or into a vacuum, the interface separating the jet from the exterior environment is generally unknown in advance and must be determined simultaneously with the flow field. Consequently, the mathematical description of nozzle jet flows naturally gives rise to free boundary problems for nonlinear systems of partial differential equations. From the analytical viewpoint, jet flows provide a rich framework for investigating fundamental questions concerning the existence, uniqueness, regularity and qualitative behavior of solutions to the compressible Euler equations. The interaction of nonlinear waves, the appearance of transonic phenomena and the presence of free boundaries often generate significant mathematical difficulties, especially when the flow approaches sonic states where the governing equations may change type or become degenerate. Understanding these phenomena is therefore of considerable interest not only for applications in gas dynamics but also for the broader development of the theory of nonlinear hyperbolic equations and free boundary problems arising in compressible fluid flows.

Among the various flow regimes arising in nozzle flows, transonic phenomena occupy a particularly important position due to the interplay between subsonic, sonic and supersonic states. Of special interest is the sonic transition, which occurs when the flow speed reaches the local sound speed and the governing equations undergo a fundamental change in character. In many nozzle configurations, the sonic state serves as a critical threshold separating qualitatively different flow regimes and plays a decisive role in determining the subsequent evolution of the flow. From the mathematical perspective, the presence of sonic curves introduces significant analytical difficulties because the Euler equations become degenerate at these locations, leading to the loss of strict hyperbolicity and the breakdown of standard techniques applicable to uniformly hyperbolic systems. Moreover, sonic transitions are often accompanied by the emergence of complex wave structures, free boundaries and singular behaviors that strongly influence the global flow pattern. Consequently, the study of sonic and sonic-supersonic flows has become a central topic in the theory of compressible Euler equations, motivating extensive research on degenerate hyperbolic equations, transonic free boundary problems and the qualitative properties of solutions near sonic regions.

Motivated by both its practical significance and its rich mathematical structure, the nozzle jet flow problem has been investigated extensively over the past several decades. A pioneering contribution was made by Courant and Friedrichs \cite{Courant1948} in their classical monograph ``Supersonic Flow and Shock Waves'', where a detailed description of various jet flow patterns generated by supersonic flows exhausting from two-dimensional nozzles was presented. Since then, considerable efforts have been devoted to the rigorous mathematical analysis of nozzle jet flows under different physical assumptions and geometric configurations. In particular, Chen and Qu \cite{chen2010} investigated supersonic jet flow solution expanding into a lower-pressure atmosphere. They established the existence of classical solutions in both the interaction region of steady rarefaction waves and the reflection region near the jet interface for sufficiently small pressure differences, while also analyzing the formation of a vacuum for larger pressure differences. Subsequently, \cite{chenQu2014} established the global existence of continuous and piecewise smooth supersonic flows in two-dimensional convex ducts by analyzing the interaction of rarefaction waves and further characterized the occurrence of vacuum. Similar ideas were later extended to relativistic Euler flows in convex ducts in \cite{Luan2018}. More recently, Lai \cite{Lai2023} established the existence of sonic--supersonic jet flows issuing from a 2-D nozzle. Besides the aforementioned works, compressible flows in divergent nozzle configurations have also been widely studied \cite{Yuanyuan2024, Xiaomin2026, laigeng226, park2026}. Related investigations include nozzle flow problems in finitely long nozzles \cite{zhang2026, Gao2025, yang2022} and smooth supersonic flows in multi-dimensional nozzles \cite{Chen2014, genglai2020, gangxu2021}, along with the references therein. 
 
In addition to nozzle jet flows, substantial progress has been made in the study of degenerate hyperbolic boundary value problems arising in compressible fluid dynamics. Zhang and Zheng \cite{Zhang2014} proved the local existence of smooth transonic solutions for a Cauchy problem whose boundary exhibits Tricomi-type hyperbolic degeneracy, while Hu and Li \cite{yanbo2020} extended the analysis to the two-dimensional steady full Euler system. The existence of smooth transonic flows associated with Keldysh-type degenerate hyperbolic boundaries and related transonic free boundary problems has also been investigated in various settings \cite{wangc2019}, revealing the intricate mathematical structure generated by sonic degeneracy. Beyond the works mentioned above, extensive theoretical and numerical studies have also been carried out on the compressible Euler equations and related nonlinear hyperbolic systems in various physical and mathematical settings; see, for example, \cite{Manganaro2019, Manganaro2017, Mzafar2026, Pandey1, Pandey2, Rahul2026} and the references cited therein.

Although substantial progress has been made in the study of sonic-supersonic jet flows, most of the available results are established within the framework of ideal or polytropic gas laws. The analysis in these settings relies heavily on the particular structure of the pressure-density relation and the associated characteristic geometry. When a non-ideal equation of state such as the van der Waals law is considered, the nonlinear features of the governing equations become significantly more intricate, leading to substantial modifications in the characteristic structure and sonic degeneracy. As a consequence, several analytical techniques developed for the polytropic case cannot be applied directly and a new analysis is required to construct sonic-supersonic jet flow solutions in the non-ideal gas setting.
\begin{figure}
    \centering
    \includegraphics[width=0.7\linewidth]{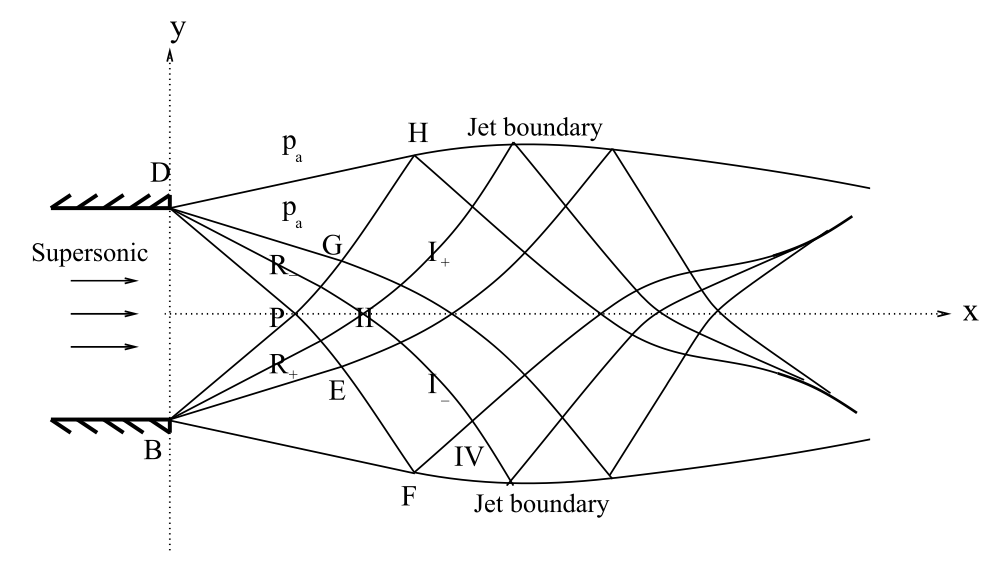}
    \caption {A representative jet-flow structure formed when a uniform flow expands into a low-pressure region; see Chapter V of \cite{Courant1948}.}
    \label{Jetflow}
\end{figure}

In this article, we investigate sonic-supersonic jet flows issuing from a 2-D straight nozzle for compressible fluids governed by the van der Waals equation of state. The flow is assumed to attain the sonic state at the nozzle exit and subsequently expand into either a vacuum or a static atmosphere of lower pressure, see Figure \ref{Jetflow}. In contrast to the uniformly supersonic case, the presence of sonic states leads to the degeneration of the governing equations and gives rise to a degenerate hyperbolic free boundary problem. Moreover, singular behavior occurs at the nozzle exit points $B$ and $D$ located at $(0,-l)$ and $(0,l)$, respectively, which further complicates the analysis. While substantial progress has been made in the study of sonic-supersonic jet flows for ideal and polytropic gases, extending these results to non-ideal gases presents significant mathematical challenges due to the more intricate nonlinear structure induced by the van der Waals pressure law. The objective of this work is to establish the existence of sonic-supersonic jet flow solutions in the van der Waals setting and thereby extend the existing theory beyond the classical polytropic framework.

The widely studied two-dimensional steady Euler equations read \cite{Courant1948}
\begin{equation}\label{Euler}
\begin{split}
    &\frac{\partial(\rho u) }{\partial x}+\frac{\partial (\rho v)}{\partial y}=0,\\
    &\frac{\partial(\rho u^2+p) }{\partial x}+\frac{\partial (\rho uv)}{\partial y}=0,\\
    &\frac{\partial(\rho uv) }{\partial x}+\frac{\partial (\rho v^2+p)}{\partial y}=0, \; (x,y) \in \mathbb{R}^2,
\end{split}
\end{equation}
where the variables $u, v$ represent x- and y-direction flow velocity, $\rho$ is density and $p=p(\tau)$ is the pressure and $\tau=\frac{1}{\rho}$ denotes the specific volume. 

Although the preceding analysis is presented for the classical polytropic gas, the ideal gas law is often inadequate for describing the behavior of real gases under high-pressure or low-temperature conditions. Such non-ideal effects arise in a wide range of engineering applications, including internal combustion engines, refrigerators, heat pumps, turbogenerators, hot-air balloons, Organic Rankine Cycle (ORC) systems, gas storage facilities, fire extinguishers and many other practical applications \cite{Mzafar2016}. The principal limitation of the polytropic gas model is that it neglects both the intermolecular attractive forces and the finite volume occupied by gas molecules. To account for these non-ideal effects, we consider a compressible fluid governed by the polytropic van der Waals equation of state \cite{callen1998thermodynamics, Pandey3},
\begin{equation}\label{vanderwaals}
p(\tau)=\frac{K}{(\tau-b)^{\gamma+1}}-\frac{a}{\tau^2},
\end{equation}
where $\tau=1/\rho$ denotes the specific volume, $K>0$ is a constant related to the entropy of the gas and $\gamma\in(0,1)$. The positive constants $a$ and $b$ characterize the intermolecular attractive forces and the finite volume occupied by gas molecules, respectively. In the special case $a=0$, the above equation reduces to the dusty gas model, whereas $a=b=0$ yields the classical polytropic ideal gas law.

According to \cite{MR3820376}, the van der Waals equation of state can be classified into the following three categories, as illustrated in Figure~\ref{vanderwaalsgas}.
\begin{itemize}
\item $p'(\tau)<0$ and $p''(\tau)>0$.
\item $p'(\tau)<0$ for $\tau\in(b,\infty)$, while $p''(\tau)>0$ for $\tau\in(b,\tau_1)\cup(\tau_2,\infty)$ and $p'(\tau)=p''(\tau)=0$ for $\tau\in(\tau_1,\tau_2)$.
\item $p'(\tau)<0$ for $\tau\in(b,\infty)$, whereas $p''(\tau)>0$ for $\tau\in(b,\tau_1)\cup(\tau_2,\infty)$, $p''(\tau)<0$ for $\tau\in(\tau_1,\tau_2)$ and $p''(\tau_1)=p''(\tau_2)=0$.
\end{itemize}

The van der Waals gas is known to possess regions where the pressure law loses convexity, leading to additional mathematical difficulties associated with changes in the wave structure and loss of genuine nonlinearity. In the present work, we restrict our attention to a physically relevant regime in which the pressure function remains strictly convex. This assumption guarantees the hyperbolicity of the Euler system and allows the characteristic structure of the flow to be analyzed within a stable analytical framework. Moreover, many of the arguments developed in this article rely only on the monotonicity and convexity properties of the pressure law and are therefore applicable to a broader class of convex equations of state beyond the van der Waals model; see, for example, \cite{MR3342411, MR3434409}. Accordingly, throughout this article, the pressure function $p=p(\tau)$ is assumed to satisfy
\begin{equation}\label{pressure_condition}
    p'(\tau)<0, \; p''(\tau)>0,\; \kappa(\tau)>0,\;m(\tau)>0 \;\text{for} \; \tau>\tau_1.
\end{equation}
For technical convenience, we further assume that $\kappa'(\tau)>0$. This assumption is justified for sufficiently large values of $\tau$. Indeed, introducing the dimensionless variables
$$\Pi=\frac{b^2p(\tau)}{a}, \qquad \bar{\tau}=\frac{\tau}{b},$$
the equation of state \eqref{vanderwaals} can be written in the dimensionless form
$$\Pi(\bar\tau)=\frac{A'}{(\bar\tau-1)^{\gamma+1}}-\frac{1}{\bar\tau^2},$$ where $A'=\frac{Ab^{1-\gamma}}{a}$, see \cite{MR3434409}. Without loss of generality, we normalize $A'=1$. A direct calculation then shows that
$$\kappa'(\tau)=\frac{\kappa(\tau)^2}{2b}\frac{d}{d\bar\tau}\left(\frac{\bar\tau\Pi''(\bar\tau)}{\Pi'(\bar\tau)}\right)$$ and
$$\frac{d}{d\bar\tau}\left(\frac{\bar\tau\Pi''(\bar\tau)}{\Pi'(\bar\tau)}\right)=\frac{H_\gamma(\bar\tau)}{[2(\bar\tau-1)^{\gamma+1}-(\gamma+1)\bar\tau^3(\bar\tau-1)]^2},$$
where $H_\gamma(\bar\tau)=(\gamma+1)^2(\gamma+2)\bar\tau^6-2\bar\tau^2(\bar\tau-1)^{\gamma+2}(\gamma+1)[(\gamma+1)^2\bar\tau^2+(7\gamma+5)\bar\tau+9]$.
Moreover, $H_0(\bar{\tau})>0$ for $\bar{\tau}>4$. By continuity of $H_\gamma(\bar{\tau})$ with respect to $\gamma$ in a neighborhood of $\gamma=0$, it follows that $\kappa'(\tau)>0$ for sufficiently large $\tau>4b$.

\begin{figure}[htbp]
\centering

\begin{subfigure}{0.32\textwidth}
    \centering
    \includegraphics[width=0.65\linewidth]{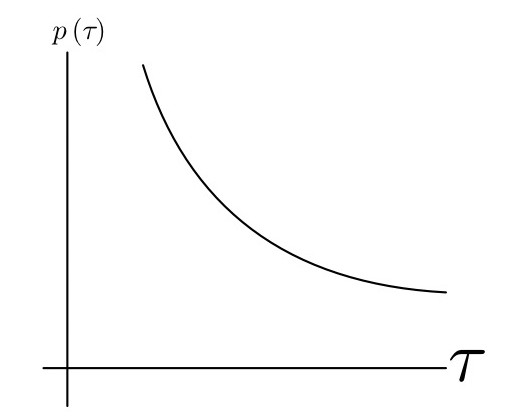}
    \caption{}
    \label{fig:sfig1}
\end{subfigure}
\hfill
\begin{subfigure}{0.32\textwidth}
    \centering
    \includegraphics[width=0.9\linewidth]{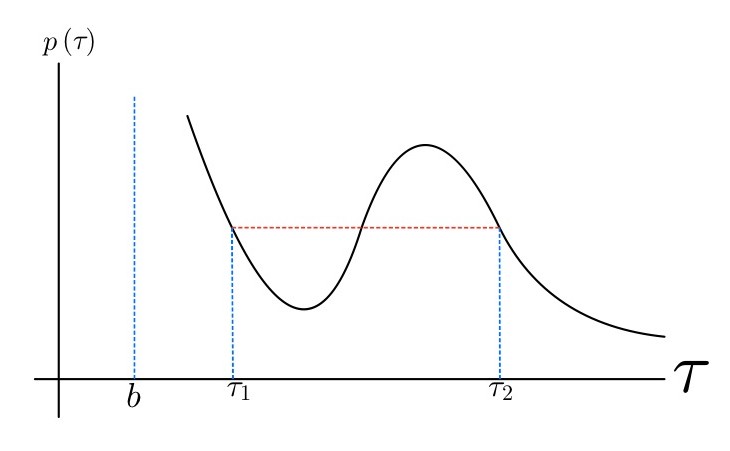}
    \caption{}
    \label{fig:sfig2}
\end{subfigure}
\hfill
\begin{subfigure}{0.32\textwidth}
    \centering
    \includegraphics[width=0.9\linewidth]{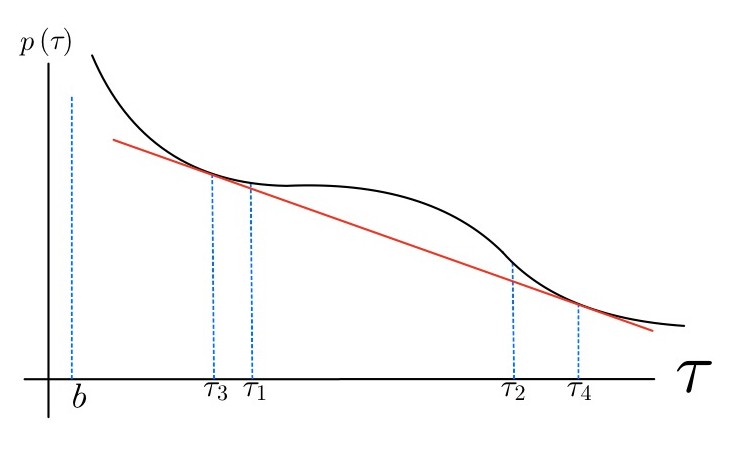}
    \caption{}
    \label{fig:sfig3}
\end{subfigure}
\caption{Three possible configurations of the van der Waals pressure function $p(\tau)$ corresponding to different convexity properties.}
\label{vanderwaalsgas}
\end{figure}

To analyze the degenerate hyperbolic free boundary value problem arising from sonic-supersonic jet flows, we employ the characteristic decomposition approach which was originally introduced in the study of simple wave solutions for the 2-D compressible Euler system \cite{Jiequan2006}, has subsequently become an effective tool in the investigation of multidimensional wave interactions, transonic phenomena and related free boundary problems \cite{Yfan2023, Jchen2025, Rahul3}. By exploiting the underlying characteristic framework of the Euler system and establishing suitable a priori estimates, we are able to handle both the sonic degeneracy and the singular behavior at the endpoints of the nozzle exit. As a consequence, we establish the global existence of locally Lipschitz continuous sonic-supersonic jet solutions in the vacuum case. Furthermore, when the jet exhausts into a static atmosphere with pressure $0\leq p_a<p_0$, we prove the local existence of sonic-supersonic jet flow solutions for $u_0=c_0$.

The remainder of this article is arranged as follows: In Section 2, we reformulate the problem together with the statement of the main results and derive several characteristic identities and decompositions that play a fundamental role in the subsequent analysis. These decompositions are subsequently employed to obtain the a priori estimates required for the global analysis of jet flows expanding into a vacuum in Section 3. In Section 4, we consider the case where the surrounding medium is a static atmosphere and prove the local existence of a sonic-supersonic jet solution at the nozzle exit. The paper concludes with Section 5.

\section{Preliminary analysis and problem description}

\subsection{Problem formation and main results}

In this subsection, we formulate the sonic-supersonic jet flow problem and state the main results of this article. Consider a two-dimensional straight nozzle that is symmetric with respect to the $x$-axis and bounded by the walls
\begin{equation}\label{walls}
      W_\pm=\{(x,y):\,y=\pm l,\;x\in(-\infty,0]\}.
\end{equation}
The nozzle exit is located on the $y$-axis. Consider a uniform flow with pressure $p_0$, velocity $(u_0,0)$, density $\rho_0$ and sound speed $c_0$ arrives at the nozzle exit, where $u_0\geq c_0$. 

We first consider the case where the nozzle is surrounded by a vacuum. Then the flow exhausts from the nozzle into the vacuum and is separated from the vacuum region by a vacuum boundary; see Figure \ref{vacuumboundary}. To describe the resulting sonic-supersonic jet flow, we consider system \eqref{Euler} together with the boundary conditions
\begin{equation}\label{Boundary_v}
\left.(u,v,c)\right|_{\overline{BD}}=(u_0,0,c_0),\qquad
\left.c\right|_{x=\varphi(y),\,|y|>l}=0,\qquad
\varphi(\pm l)=0.
\end{equation}
where $x=\varphi(y)\;(|y|>l)$ denotes a vacuum boundary. 

The expansion of compressible jet flows into a vacuum has attracted considerable attention in both theoretical and applied fluid dynamics; see, for instance, \cite{MR3342411, MR3434409, tuer1974axisymmetric, cassanova1967expansion}. From a physical perspective, such configurations arise naturally in high-altitude and near-space environments, where the surrounding medium is sufficiently rarefied that vacuum effects become significant. In the present setting, the vacuum boundary is not known a priori and must be determined as part of the solution, which leads to a free boundary problem for the Euler system. The problem of the expansion of a jet flow from a nozzle into a vacuum is of considerable basic and practical interest. The assumption that the nozzle is surrounded by a vacuum is natural if the aerospace vehicle is in the outer atmosphere or the surrounding atmosphere is very rare.

We investigate \eqref{Euler}--\eqref{Boundary_v} in the sonic case
 $u_0=c_0$. Since the incoming flow is sonic, the Euler system \eqref{Euler} becomes degenerate hyperbolic at the nozzle exit. As a result, the free boundary value problem \eqref{Euler}--\eqref{Boundary_v} is of degenerate hyperbolic type. Nevertheless, owing to the absence of compression in the jet, we establish the existence of a global solution. The corresponding result is summarized in the following theorem:
\begin{theorem}\label{main1}
Assume that the incoming flow is either sonic or supersonic, i.e.,
$u_0\ge c_0$ and that the pressure is governed by the van der Waals equation of state \eqref{vanderwaals}. Then the free boundary value problem \eqref{Euler}--\eqref{Boundary_v} admits a solution
\[
(u,v,c)\in C^{0,1}\!\left(\Sigma(M_0)\right)
\cap
C^{0}\!\left(\overline{\Sigma(M_0)}\right),
\]
where
\[
\Sigma(M_0)=\{(x,y)\mid x>\varphi(y;M_0),\,-\infty<y<+\infty\},
\]
with
\[
\varphi(y;M_0)=
\begin{cases}
(|y|-l)\cot\alpha_v, & |y|\ge l,\\
0, & |y|<l,
\end{cases}
\qquad
M_0=\frac{u_0}{c_0},
\]
where $\alpha_v$ denotes the flow angle along the vacuum boundary and is given by
\[
\alpha_v=\int_{\tau_0}^{\infty} \frac{\sqrt{-p'(\tau)}\sqrt{u^2+v^2+\tau^2p'(\tau)}}{u^2+v^2}d\tau.
\]
Moreover, $c$ remains positive in $\Sigma(M_0)$ and vanishes on the vacuum boundary
\[
x=\varphi(y;M_0),\qquad |y|>l.
\]
\end{theorem}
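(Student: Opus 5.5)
The construction will be explicit, built from centered simple waves at the exit endpoints $B$ and $D$, an interaction region between them, and uniform states elsewhere. The solution is assembled from four pieces. The first is the uniform state $(u_0,0,c_0)$ in the triangle-type region bounded by $\overline{BD}$ and the two characteristics issuing from $B$ and $D$; when $u_0=c_0$ these characteristics are both vertical and this region degenerates to the segment itself. The second and third are a centered rarefaction (Prandtl--Meyer) fan at $D$ and its mirror image at $B$, each expanding the flow from $c_0$ down to $c=0$. The fourth is the interaction region of the two fans beyond their first crossing characteristics.

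For the fans I use the van der Waals Riemann invariants. Set $q=\sqrt{u^2+v^2}$, write the flow angle $\theta$ and the Mach angle $A$ with $\sin A=c/q$, and use the Bernoulli relation $q^2/2+h(\tau)=\text{const}$. The simple-wave relation $\theta\pm\int \sqrt{q^2-c^2}/(qc)\,dq=\text{const}$ becomes, after the change of variable $c^2=-\tau^2p'(\tau)$ and $q\,dq=-dh=\tau p'\,d\tau$, exactly the integral defining $\alpha_v$. Under \eqref{pressure_condition}, in particular $p''>0$, the condition $\kappa'>0$, and $\tau^2p'(\tau)\to0$ as $\tau\to\infty$, this integral is finite. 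The fan at $D$ turns the flow from $\theta=0$ to $\theta=\alpha_v$. At that angle $c=0$, so the last characteristic of the fan is a straight ray along which the density vanishes, and this ray is the vacuum boundary $x=(|y|-l)\cot\alpha_v$. Continuity at the sonic edge holds because $A=\pi/2$ there. The fan is nevertheless only Lipschitz away from $D$, and continuity is lost at $D$ itself; this is why the solution is asked to lie in $C^{0,1}$ only in the open set $\Sigma(M_0)$ and in $C^0$ up to the boundary, with $D$ and $B$ excluded.

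In the interaction region I would solve a Goursat problem for the diagonal system in characteristic form. The data are given on the two characteristics bounding the fans from $B$ and $D$, which meet on the $x$-axis (or at the degenerate sonic segment when $u_0=c_0$). The approach follows the characteristic decomposition method of \cite{Jiequan2006}. First I derive the decompositions for $\bar\partial_\pm c$ with van der Waals coefficients involving $\kappa(\tau)$ and $m(\tau)$. These show that $\bar\partial_\pm c<0$ is preserved, so there is no compression and characteristics of the same family do not cross. With these signs, the Riemann invariants stay monotone and bounded between their boundary values, and $c$ stays in $(0,c_0]$ away from the vacuum. This gives a priori bounds on the angles and on $c$, and the first derivatives are bounded on compact subsets. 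Standard local existence for the Goursat problem, combined with continuation, then yields a global solution in the interaction region. Symmetry about the $x$-axis reduces the work to half the domain.

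The main obstacle is the sonic degeneracy when $u_0=c_0$. At the exit, $\alpha=\beta$ (both Mach directions coincide), so the Goursat data start on a degenerate curve and the characteristic equations are singular; the endpoint singularities at $B$ and $D$ add further difficulty. To handle this, I would use the decompositions to bound $\bar\partial_\pm c/\sin 2A$ or a similar normalized quantity, and pass to a limit from the supersonic case $u_0=c_0+\varepsilon$, for which the same construction works uniformly. A priori estimates independent of $\varepsilon$ on compact subsets of $\Sigma(M_0)$ then give a locally Lipschitz limit via Arzelà--Ascoli, which solves the problem with continuous boundary values; in particular $c=0$ on the vacuum rays and $c>0$ inside $\Sigma(M_0)$.
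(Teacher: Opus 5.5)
Your architecture matches the paper's: a uniform state, centered fans at $B$ and $D$ ending on the vacuum rays of angle $\alpha_v$, a Goursat problem in the interaction region controlled by the characteristic decompositions, and a sonic limit $u_0\downarrow c_0$ via uniform estimates and Arzel\`a--Ascoli. The gap is that the estimates carrying the proof are asserted rather than derived, and two of your justifications fail.

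\textbf{The interaction region.} Bounds on the Riemann invariants do not keep $c$ positive. Let $\lambda\in[0,\alpha_v)$ be the Prandtl--Meyer function. The value of $\theta+\lambda$ carried in along $C_-$ from the $D$-fan ranges over $[0,2\alpha_v)$, and $\theta-\lambda$ carried along $C_+$ from the $B$-fan ranges over $(-2\alpha_v,0]$. So $\lambda=\frac12[(\theta+\lambda)-(\theta-\lambda)]$ is a priori allowed to reach the vacuum value $\alpha_v$. That vacuum never forms at a finite point, and hence that the solution fills the whole sector between $C_\pm^P$, is the content of Lemmas~\ref{3.2_l2}--\ref{3.2_l3}. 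A first-touching-point argument with \eqref{ch2_4} for $\nu=1$ (at such a point $\bar\partial^-(\bar\partial^+c/c)>0$) gives $\mathcal H<\bar\partial^\pm c/c<0$, so $c$ decays at most exponentially along characteristics. Together with $\bar\partial^\pm c<0$, which makes the level curves $c=c_0-\varepsilon$ non-characteristic, this lets the solution be continued across them and shows they recede to infinity as $\varepsilon\to c_0$. A bound on $|\nabla c|$ on compact sets, which is all you claim, would not exclude $c$ vanishing at a finite point.

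\textbf{The sonic limit.} You name the right quantity, $\bar\partial^\pm c/(\sin\omega\cos\omega)$, which controls $\nabla(u,v,c)$ through \eqref{derivatives} and \eqref{ch1_8}. What is missing is the mechanism that bounds it uniformly in $u_0$ near the singular corners, namely the Riccati structure of the decompositions.
\begin{itemize}
\item Inserting $\bar\partial^-\omega<0$ from \eqref{3.3_5} into \eqref{ch2_6} gives $\bar\partial^-(\cos\omega/\bar\partial^+c)<-\mathcal K$, with $\mathcal K=1/(2c_0\mu^2(\tau_0))$ independent of $u_0$. This, not the finiteness of $\alpha_v$, is where $\kappa'>0$ enters, via $\mu^2(\tau)\le\mu^2(\tau_0)$.
\item Since $\cos\omega/\bar\partial^+c\to0$ at the fan center $D$, integrating along the $C_-$ characteristic from $D$ gives $\bar\partial^+c/\cos\omega>-1/(\mathcal K r_D)$ (Lemma~\ref{3.3_l1}).
\item The same argument applied to $c/\bar\partial^+c$, split at the threshold $c_1$ of \eqref{3.3_10}, bounds $\bar\partial^\pm c/c$ (Lemma~\ref{3.3_l2}).
\end{itemize}
The resulting Lipschitz constant depends only on $\operatorname{dist}(\cdot,\{B,D\})$, not on the distance to $\partial\Sigma(M_0)$. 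That is what yields continuity up to $\overline{BD}$ and up to the vacuum rays with the correct boundary values (Lemma~\ref{3.4_l2}). Estimates on compact subsets of $\Sigma(M_0)$, as you formulate them, give only a locally Lipschitz limit in the open set and say nothing about the boundary. Likewise, ``$A=\pi/2$ on the sonic edge'' is not a continuity argument.

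\textbf{Minor slips.}
\begin{itemize}
\item The correct relation is $q\,dq=-\tau p'(\tau)\,d\tau$.
\item At the sonic state $\alpha-\beta=\pi$: the two characteristic lines coincide, but the angles do not.
\item $\alpha_v<\infty$ because $-p'(\tau)\sim K(\gamma+1)\tau^{-\gamma-2}$ makes the integrand $O(\tau^{-1-\gamma/2})$; $\tau^2p'(\tau)\to0$ alone would not suffice.
\end{itemize}
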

We further investigate the case where the nozzle exhausts into a static atmosphere with pressure $p_a$, satisfying $0\leq p_a<p_0$, where $p_0$ is the pressure of the supersonic flow at the nozzle exit. The exhausting gas is separated from the surrounding atmosphere by a jet boundary, which is a contact discontinuity; see Figure \ref{Jetboundary}. Denoting the free jet boundary by $\overline{BD}=\{(x,y) \;| \; x=0,\; |y|<l\}$ and $x=\psi(y), |y|>l$, the flow is governed by the system \eqref{Euler} together with the boundary conditions
\begin{figure}
\begin{subfigure}{.5\textwidth}
  \centering
  \includegraphics[width=.75\linewidth]{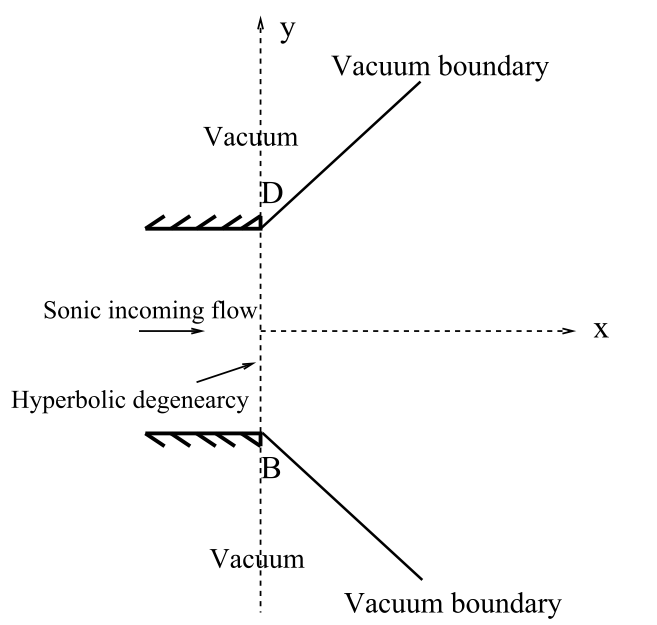}
  \caption{Vacuum boundary; $p_a=0$}  
  \label{vacuumboundary}
\end{subfigure}%
\begin{subfigure}{.5\textwidth}
  \centering
  \includegraphics[width=.75\linewidth]{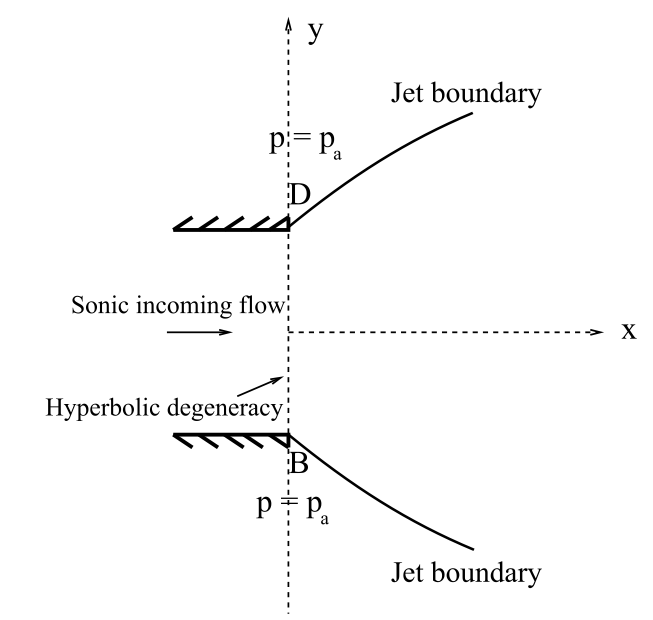}
  \caption{Jet boundary; $0<p_a<p_0$} 
  \label{Jetboundary}
\end{subfigure}
\caption{Configurations of sonic--supersonic jet flows issuing from a 2-D straight nozzle: (a) expansion into a vacuum; (b) expansion into an atmosphere.}
\label{case1}
\end{figure}
\begin{equation}\label{Boundary_p}
    \begin{cases}
        (u(x,y),v(x,y),c(x,y))=(u_0,0,c_0), & (x,y)\in \overline{BD};\\
        \begin{aligned}
        p(x,y)&=p_a,\\
        \frac{u(x,y)}{v(x,y)}&= \psi'(y),
        \end{aligned}
        & (x,y)\in \{(x,y): x=\psi(y),\ |y|>l\};\\
        \psi(l)=0,\quad \psi(-l)=0.
    \end{cases}
\end{equation}
We are concerned with the sonic case $u_0=c_0$. Since the flow attains the sonic state at the nozzle exit, the Euler system becomes degenerate there and \eqref{Euler}-\eqref{Boundary_p} is transformed into a degenerate hyperbolic free boundary problem. The analysis is further complicated by the singular behavior near the endpoints of the nozzle exit, which plays an essential role in the construction of sonic-supersonic jet flow solutions. 

To establish the existence of sonic-supersonic jet flows exhausting into an atmosphere, we first study \eqref{Euler}-\eqref{Boundary_p} for the supersonic case $u_0>c_0$. We prove the existence of local supersonic jet flow solutions near the nozzle exit and derive uniform interior $C^{0,1}$ estimates away from the nozzle end points B and D. Consequently, by means of the Arzelà-Ascoli's theorem and a standard diagonal argument, we obtain a locally Lipschitz continuous sonic-supersonic jet flow solution of the free boundary problem \eqref{Euler}-\eqref{Boundary_p}. The corresponding result is stated in the following theorem:
\begin{theorem}\label{main2}
Assume that $u_0=c_0$ and that $-\pi<\theta_a<0$.
Then there exists a sufficiently small constant $\delta>0$ such that the free boundary problem
\eqref{Euler}--\eqref{Boundary_p}
admits a local sonic--supersonic jet flow solution
\[
(u,v,c)
\in
C^{0,1}\!\left(\Gamma(\delta)\right)
\cap
C^{0}\!\left(\overline{\Gamma(\delta)}\right),
\]
where
\[
\Gamma(\delta)
=
\left\{
(x,y):
\bar \psi(y)<x<\bar \psi(y)+\delta,\;
|y|<l+\delta
\right\},
\]
with
\[
\bar \psi(y)=
\begin{cases}
\psi(y), & |y|>l,\\
0, & |y|\le l.
\end{cases}
\]
\end{theorem}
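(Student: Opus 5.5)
The plan is the approximation route announced before the statement. We solve the supersonic problems with $u_0=c_0+\varepsilon$, show estimates that do not depend on $\varepsilon$ away from $B$ and $D$, and pass to the limit. First, fix the local structure near $D$ (and symmetrically near $B$) for $u_0>c_0$. A centered simple rarefaction wave issues from $D$. In it, the flow angle $\theta$ turns from $0$ down to the value $\theta_a$ determined by $p=p_a$. The relation between $\theta$ and $q$ along the $C_+$ family is the Prandtl--Meyer-type integral already used for $\alpha_v$ in Theorem \ref{main1}. The condition $-\pi<\theta_a<0$ together with the convexity assumptions \eqref{pressure_condition} makes this well defined and monotone.

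\textbf{Step 1: the supersonic problem.} We state \eqref{Euler} in characteristic variables: the Riemann-type invariants $\theta\pm\omega(q)$, where $\omega'(q)=\sqrt{q^2-c^2}/(qc)$ in the van der Waals setting, and the Mach angle $A$ with $\sin A=c/q$. This gives a diagonal system
\[
\bar\partial_\pm(\theta\pm\omega)=0,\qquad \bar\partial_\pm=\cos(\theta\pm A)\partial_x+\sin(\theta\pm A)\partial_y .
\]
The domain near the exit splits into four regions:
\begin{itemize}
\item the uniform region bounded by the characteristics from $B$ and $D$;
\item the two centered waves at $D$ and $B$;
\item their interaction region, a Goursat problem;
\item the reflection region next to the free jet boundary $x=\psi(y)$.
\end{itemize}
On the jet boundary, $p=p_a$ fixes $q$, so one invariant is prescribed, and the boundary is a streamline. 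For fixed $\varepsilon>0$, the system is strictly hyperbolic in the relevant range because $q>c$ there. Standard local existence for Goursat problems and for free-boundary problems of this type (as in Chen--Qu and Lai) then gives a $C^1$ solution in a neighbourhood $\Gamma(\delta)$. We must check that $\delta$ can be chosen independent of $\varepsilon$.

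\textbf{Step 2: uniform estimates.} This is the main step. Using the characteristic decompositions of Section 2, we derive equations of the form
\[
\bar\partial_\mp(\bar\partial_\pm c)=a(\bar\partial_\pm c)^2+b\,\bar\partial_\pm c .
\]
The coefficients involve $\kappa(\tau)$, and their signs follow from $p''>0$, $\kappa>0$ and $\kappa'>0$. From these we get the following:
\begin{itemize}
\item the sign-preservation property $\bar\partial_\pm c\le0$, i.e.\ expansion and no compression, which persists in the interaction and reflection regions;
\item bounds on $\bar\partial_\pm c$ and $\bar\partial_\pm\theta$ on compact subsets away from $B$ and $D$, independent of $\varepsilon$.
\end{itemize}
To get the second item, we integrate the Riccati-type inequality along characteristics starting from the boundaries of the centered waves. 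On these boundaries the derivatives are explicit and behave like $1/r$ with $r$ the distance to $D$, so they are bounded at positive distance from $D$. We also need a uniform lower bound for $q-c$ away from the sonic segment $\overline{BD}$. This follows from monotonicity: $q$ strictly increases along $C_\pm$ into the jet. It yields a lower bound on the Mach angle difference, so characteristics of the two families stay transversal uniformly.

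I expect the main obstacle to be this step. At the sonic limit, $A\to\pi/2$, so the two characteristic families become tangent on $\overline{BD}$ and the Lipschitz bounds degenerate there. Near $B$ and $D$ the centered wave also makes derivatives blow up. The estimates therefore have to be local, with constants that depend on the distance to $\overline{BD}\cup\{B,D\}$ but not on $\varepsilon$. Controlling the reflection region uniformly, so that the free boundary $\psi_\varepsilon$ stays Lipschitz with slope bounded away from degeneracy, uses $\theta_a>-\pi$.

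\textbf{Step 3: passage to the limit.} With the uniform bounds, we apply Arzelà--Ascoli on an exhaustion of $\Gamma(\delta)$ and take a diagonal subsequence, so that $(u_\varepsilon,v_\varepsilon,c_\varepsilon)$ and $\psi_\varepsilon$ converge. The limit is locally Lipschitz and satisfies \eqref{Euler} in the weak, and in fact a.e., sense. Continuity up to $\overline{\Gamma(\delta)}$, including attainment of the sonic data on $\overline{BD}$ and of $p=p_a$ on $x=\psi(y)$, comes from equicontinuity of the invariants $\theta\pm\omega$ along characteristics. Those invariants are constant along their own families, which gives a modulus of continuity up to the boundary that is uniform in $\varepsilon$.
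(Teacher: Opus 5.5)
Your overall plan (supersonic approximations with $u_0\downarrow c_0$, estimates uniform away from $B$ and $D$, then Arzelà--Ascoli with a diagonal argument) is the paper's. However, Step 2 fails in the reflection region, and that is exactly where $\delta$ must be shown independent of $\varepsilon$. You claim $\bar\partial^\pm c\le 0$ persists there; it does not. On the jet boundary $p=p_a$, so $\tau$ and hence $c$ are constant along that streamline, i.e. $\bar\partial^+c+\bar\partial^-c=0$ there (this is \eqref{4.1_7}). Since the incoming rarefaction has $\bar\partial^+c<0$, the reflected waves have $\bar\partial^-c>0$: rarefactions reflect off a constant-pressure boundary as compressions, cf. \eqref{4.1_5} and \eqref{4.2_8}.

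With one derivative positive, the decomposition \eqref{ch2_2} loses its favourable sign. The term $(\bar\partial^-c)^2/(2\mu^2(\tau)\cos^2\omega)$ drives $\bar\partial^-c$ upward along $C_+$, allowing gradient blow-up, i.e. shock formation. The cross term $\bar\partial^+c\,\bar\partial^-c$ is now negative, so even the a priori lower bound for $\bar\partial^+c$ is lost; only its sign survives. Integrating Riccati inequalities therefore gives no $\varepsilon$-independent bound in this region, which is why the theorem is only local.

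The missing ingredient is a uniform \emph{existence size} for the free-boundary problem. The paper poses it with data on the $C_+$ characteristic $C_+^{n_j}$ from $B$ (the one on which $c=c_a$). It then uses a local existence result whose size $\epsilon_0$ depends only on the $C^1$ norm of those data. That norm is uniform in $n_j$: since $r_D\ge l$ on $C_+^{n_j}\cap\{y<0\}$, Lemma \ref{3.3_l1} gives $-1/(\mathcal K l)<\bar\partial^+c_{n_j}\le 0$ there. This yields $\epsilon_0$, hence $\delta$, together with $|\bar\partial^\pm c^*_{n_j}|<2/(\mathcal K l)$ uniformly. Note also that the paper does not re-solve the interior; in the central region $\Psi_0$ it simply reuses the vacuum sonic--supersonic solution $(\breve u,\breve v,\breve c)$.

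A second weakness is at the sonic segment. $\Gamma(\delta)$ touches $\overline{BD}$, and $C^0(\overline{\Gamma(\delta)})$ requires the data $(c_0,0,c_0)$ to be attained there. You let constants depend on the distance to $\overline{BD}$ and claim a uniform lower bound on $q-c$ ``from monotonicity''. But monotonicity only gives $q-c>u_0-c_0\to 0$, so you would need a quantitative expansion rate. You then rest boundary continuity on Riemann invariants being constant along characteristics. That yields a modulus of continuity only if you control, uniformly in $\varepsilon$, where backward characteristics land, and this control degenerates near $\overline{BD}$, where both families become tangent to the sonic line.

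The paper avoids both problems with the weighted quantity $\cos\omega/\bar\partial^\pm c$. By \eqref{ch2_6}, $\bar\partial^-(\cos\omega/\bar\partial^+c)<-\mathcal K$, so integrating from $D$ gives $\bar\partial^+c/\cos\omega>-1/(\mathcal K r_D)$, and symmetrically with $r_B$. Combined with Lemma \ref{3.3_l2} and \eqref{derivatives}, this bounds $\bar\partial^\pm c/(\sin\omega\cos\omega)$. The resulting Lipschitz bounds depend only on the distance to $\{B,D\}$ and stay uniform up to $\overline{BD}$, so the sonic data follow from the thin-strip argument of Lemma \ref{3.4_l2}.
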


\subsection{The Euler system and basic properties}
Considering a smooth solution, we can rewrite the system \eqref{Euler} in the form
\begin{equation}\label{1}
    \begin{cases}
        \displaystyle
        \left(\frac{u}{\tau}\right)_x+\left(\frac{v}{\tau}\right)_y=0,\\[2mm]
        \displaystyle
         uu_x+vu_y+\tau p_x=0, \\
         \displaystyle
         uv_x+vv_y+\tau p_y=0.
    \end{cases}
\end{equation}
Assuming that the flow is irrotational, i.e., $u_y=v_x$, system \eqref{1} reduces to the following form
\begin{equation}\label{2}
    \begin{cases}
        (c^2-u^2)u_x-uv(u_y+v_x)+(c^2-v^2)v_y=0,\\
        u_y-v_x=0,
    \end{cases}
\end{equation}
which is supplemented by Bernoulli's law
\begin{equation}\label{Bernaoulli}
    \frac{u^2+v^2}{2}+c^2+\frac{K((\tau-b)^2+\gamma\tau(b+\gamma\tau))}{\gamma(\tau-b)^{\gamma+2}}=\text{const.},
\end{equation}
where $c^2=-\tau^2p'(\tau)$ denotes the sound speed such that
\begin{equation}\label{3}
    p'(\tau)=\frac{-K(\gamma+1)}{(\tau-b)^{\gamma+2}}+\frac{2a}{\tau^3}, \quad p''(\tau)=\frac{K(\gamma+1)(\gamma+2)}{(\tau-b)^{\gamma+3}}-\frac{6a}{\tau^4}.
\end{equation}
It can be verified from the derivatives $p'(\tau)$ and $p''(\tau)$ that there exists a sufficiently large $\tau_1>b$ such that for all $\tau>\tau_1$,
$p'(\tau)<0 \quad \text{and} \quad p''(\tau)>0$.

The system \eqref{2} can be rewritten in the matrix form as
\begin{equation}\label{ch_matrix}
    \begin{bmatrix}
        c^2-u^2 & -uv\\
        0 & -1 
    \end{bmatrix}\begin{bmatrix}
        u\\v
    \end{bmatrix}_x+\begin{bmatrix}
        -uv & c^2-v^2\\
        -1 & 0 
    \end{bmatrix}\begin{bmatrix}
        u\\v
    \end{bmatrix}_y=0.
\end{equation}
Eigenvalues of \eqref{ch_matrix} are $\displaystyle{\Lambda_\pm= \frac{uv\pm c\sqrt{u^2+v^2-c^2}}{u^2-c^2}}$ and $\displaystyle{\Lambda_0=\frac{v}{u}}$, which implies that the system \eqref{Euler} is of mixed type: it's subsonic for $u^2+v^2<c^2$, sonic for $u^2+v^2=c^2$ and supersonic for $u^2+v^2>c^2$. When $u^2+v^2>c^2$, the system \eqref{Euler} is hyperbolic with two families of characteristic curves defined by the integral curves $\displaystyle{C_\pm: \frac{dy}{dx}=\Lambda_\pm}$ and a family of stream lines $\displaystyle{C_0:\frac{dy}{dx}=\Lambda_0}$. In this article, we restrict our attention to supersonic and sonic-supersonic flows.

We obtain the characteristic equations of \eqref{ch_matrix} as
\begin{equation}\label{Ch_equations}
    \bar\partial^\pm u+\Lambda_\mp\bar\partial^\pm v=0,   
\end{equation}
where, 
\begin{equation}\label{conditions}
   \bar\partial^+=\cos\alpha\partial_x+\sin\alpha\partial_y, \quad \bar\partial^-=\cos\beta\partial_x+\sin\beta\partial_y.
\end{equation}
Moreover, exploring \eqref{Ch_equations} gives
\begin{equation}\label{conditions1}
    \bar\partial^0=\cos\theta\partial_x+\sin\theta\partial_y, \quad \bar\partial^0=\frac{\bar\partial^++\bar\partial^-}{2\cos\omega}.
\end{equation}
\begin{figure}
    \centering
    \includegraphics[width=0.6\linewidth]{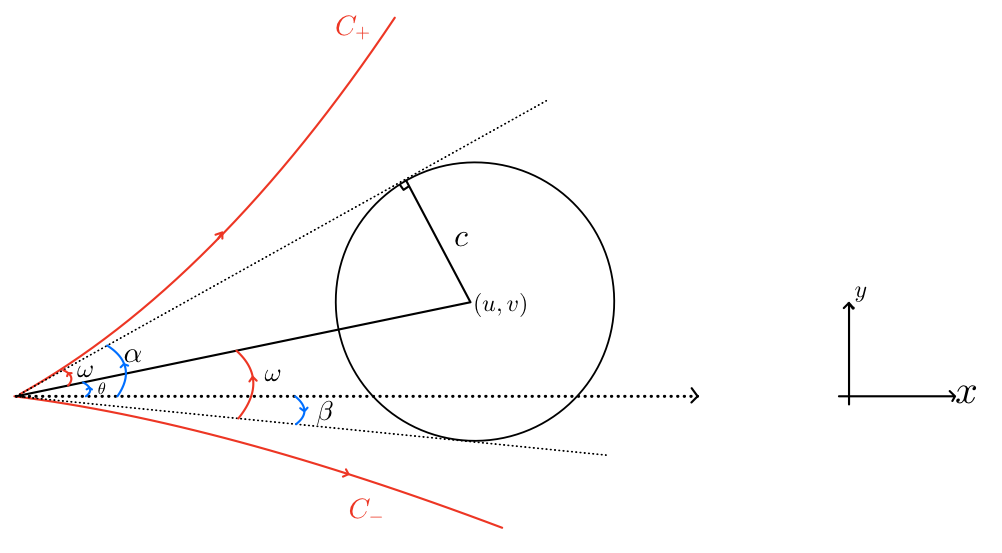}
    \caption{Characteristic curves, characteristic angles and characteristic directions.}
    \label{characteristic}
\end{figure}
The wave characteristic direction is defined as the tangent direction making an acute angle $\omega$ with the flow velocity vector $(u,v)$. A straightforward calculation shows that the $C_+$ characteristic direction forms with the direction of the flow velocity the angle $\omega$ from $(u, v)$ to $C_+$ in the counterclockwise direction and the $C_-$ characteristic direction forms with the direction of the flow velocity the angle $\omega$ from $(u, v)$ to $C_-$ in the clockwise direction; see Figure \ref{characteristic}. Furthermore, we have
\begin{equation}\label{4}
c^2=(u^2+v^2)\sin^2 \omega.
\end{equation}
The angle $\omega$ is referred to as the Mach angle.

Following \cite{chdecomposition}, we use the concept of characteristic angle. The characteristic angle associated with the $C_+$ ($C_-$, respectively) family is defined as the counterclockwise angle measured from the positive $x$-axis to the corresponding $C_+$ ($C_-$, respectively) characteristic direction. We denote these angles by $\alpha$ and $\beta$, respectively and assume that $0\leq \alpha-\beta\leq \pi$. Furthermore, let $\theta$ denote the counterclockwise angle from the positive $x$-axis to the flow velocity vector. Then it follows that
\begin{equation}\label{5}
    \begin{split}
        \begin{cases}
           \displaystyle
            \theta =\frac{1}{2}(\alpha+\beta),\quad \omega=\frac{1}{2}(\alpha-\beta),\\[2mm]
            \displaystyle
        u=\frac{c\cos\theta}{\sin\omega},\quad v=\frac{c\sin\theta}{\sin\omega}.
        \end{cases}
    \end{split}
\end{equation}
A direct computation yields
\begin{equation}\label{6}
    (u^2+v^2)\cos\alpha\cos\beta=u^2-c^2, \quad (u^2+v^2)\sin\alpha\sin\beta=v^2-c^2.
\end{equation}
Define
\begin{equation}\label{Bernaoulli1}
    \frac{u^2+v^2}{2}+\frac{K((\gamma+1)\tau-b)}{\gamma(\tau-b)^{\gamma+1}}-\frac{2a}{\tau}=\hat B.
\end{equation}
Exploiting \eqref{Bernaoulli1} and last two equations of \eqref{1} yields
\begin{equation}\label{7}
    u\hat B_x+v\hat B_y=0.
\end{equation}
Also from \eqref{Bernaoulli} we have
\begin{equation}\label{8}
    \tau_x=\frac{\tau}{c^2}(uu_x+vv_x-\hat B_x), \quad \tau_y=\frac{\tau}{c^2}(uu_y+vv_y-\hat B_y).
\end{equation} 
Further using (\ref{conditions}) we obtain
\begin{equation}\label{derivatives}
    \begin{split}
        &\partial_x = \frac{\sqrt{u^2+v^2}}{2c\cos\omega}(\sin\alpha\bar\partial^--\sin\beta\bar\partial^+),\\
        &\partial_y = \frac{\sqrt{u^2+v^2}}{2c\cos\omega}(\cos\beta\bar\partial^+-\cos\alpha\bar\partial^-).
    \end{split}
\end{equation} 
The following notations are used throughout the article:
\begin{equation}\label{9}
    \begin{cases}
        \displaystyle
        \kappa(\tau)=-\frac{2p'(\tau)}{2p'(\tau)+\tau p''(\tau)}=\frac{2-\frac{2b}{\tau}-\frac{4a(\tau-b)^{\gamma+3}}{K(\gamma+1)\tau^4}}{\gamma+\frac{2b}{\tau}+\frac{2a(\tau-b)^{\gamma+3}}{K(\gamma+1)\tau^4}},\\[2mm]
       \displaystyle
    m(\tau)=\frac{\kappa(\tau)-1}{\kappa(\tau)+1}=\frac{2-\gamma-\frac{4b}{\tau}-\frac{2a(\tau-b)^{\gamma+3}}{K(\gamma+1)\tau^4}}{\gamma+2+\frac{6a(\tau-b)^{\gamma+3}}{K(\gamma+1)\tau^4}},\\[2mm]
    \displaystyle
    \mu^2(\tau)=\frac{1}{1+\kappa(\tau)}, \quad \mathcal{Q}(\tau)=m(\tau)-\tan^2\omega. 
    \end{cases}
\end{equation}

\subsection{Characteristic equations in terms of characteristic angles}

In this subsection, we present the first-order characteristic decompositions for the characteristic angles. 

Exploring \eqref{5} and \eqref{conditions} we obtain
\begin{equation}\label{ch1_1}
    \begin{cases}
        \displaystyle
        \bar\partial^\pm u= \frac{\cos\theta}{\sin\omega}\bar\partial^\pm c+\frac{c}{2\sin^2\omega}(\cos\alpha\bar\partial^\pm \beta-\cos\beta\bar\partial^\pm\alpha),\\[2mm]
        \displaystyle
        \bar\partial^\pm v= \frac{\sin\theta}{\sin\omega}\bar\partial^\pm c+\frac{c}{2\sin^2\omega}(\sin\alpha\bar\partial^\pm \beta-\sin\beta\bar\partial^\pm\alpha).
    \end{cases}
\end{equation}
Inserting \eqref{ch1_1} into \eqref{Ch_equations} we compute
\begin{equation}\label{ch1_2}
    \begin{cases}
        \displaystyle
        \bar\partial^+ c=\frac{c(\bar\partial^+\alpha-\cos 2\omega\bar\partial^+\beta)}{\sin 2\omega}, \\[2mm]
        \displaystyle
        \bar\partial^-c=\frac{c(\cos 2\omega\bar\partial^-\alpha-\bar\partial^-\beta)}{\sin 2\omega}.
    \end{cases}
\end{equation}
In addition, invoking \eqref{Bernaoulli}, we obtain
\begin{equation}\label{ch1_3}
    \begin{cases}       
        u\bar\partial^+u+v\bar\partial^+v+\tau p'(\tau)\bar\partial^+\tau=0,\\
        u\bar\partial^-u+v\bar\partial^-v+\tau p'(\tau)\bar\partial^-\tau=0.
    \end{cases}
\end{equation}
Utilizing the expression of sound speed $c^2=-\tau^2 p'(\tau)$ we have
\begin{equation}\label{ch1_4}
    \begin{cases}
        \displaystyle
        \bar\partial^+\tau=-\frac{c\bar\partial^+c}{\tau(2p'(\tau)+\tau p''(\tau))},\\[2mm]
        \displaystyle
        \bar\partial^-\tau=-\frac{c\bar\partial^-c}{\tau(2p'(\tau)+\tau p''(\tau))}.
    \end{cases}
\end{equation}
Substituting \eqref{ch1_1}, \eqref{ch1_4} into \eqref{ch1_3} to get
\begin{equation}\label{ch1_5}
    \begin{split}
        \left(\frac{1}{\sin^2 \omega}+\kappa(\tau) \right)\bar\partial^+c=\frac{c\cos\omega(\bar\partial^+\alpha-\bar\partial^+\beta)}{2\sin^3\omega},\\
        \left(\frac{1}{\sin^2 \omega}+\kappa(\tau) \right)\bar\partial^-c=\frac{c\cos\omega(\bar\partial^-\alpha-\bar\partial^-\beta)}{2\sin^3\omega}.
    \end{split}
\end{equation}
Combining \eqref{ch1_2} and \eqref{ch1_5} gives
\begin{equation}\label{ch1_6}
    \begin{cases}
         \bar\partial^+\alpha=m(\tau) \cos 2\omega \bar\partial^+\beta,\\
          \bar\partial^-\beta=m(\tau)\cos 2\omega \bar\partial^-\alpha.
    \end{cases}
\end{equation}
Next, employ \eqref{ch1_5} and \eqref{ch1_6} to discover
\begin{equation}\label{ch1_7}
    \begin{cases}
        \displaystyle
        \bar\partial^+c=-c\mu^2(\tau)\cot\omega\bar\partial^+\beta,\\[2mm]
        \displaystyle
        \bar\partial^-c=c\mu^2(\tau)\cot\omega\bar\partial^-\beta,\\[2mm]
        \displaystyle
        c\bar\partial^+\alpha=\left(\frac{1-\kappa(\tau)\cos 2\omega}{2\cos^2\omega}\right)\sin 2\omega\bar\partial^+c,\\[2mm]
        \displaystyle
        c\bar\partial^-\beta=\left(\frac{\kappa(\tau)\cos 2\omega-1}{2\cos^2\omega}\right)\sin 2\omega\bar\partial^-c.
    \end{cases}
\end{equation}
Inserting \eqref{ch1_7} into \eqref{ch1_1} we obtain
\begin{equation}\label{ch1_8}
    \begin{cases}
        \bar\partial^+u=\kappa(\tau)\sin\beta\bar\partial^+c,\\
        \bar\partial^-u=-\kappa(\tau)\sin\alpha\bar\partial^-c,\\
        \bar\partial^+v=-\kappa(\tau)\cos\beta\bar\partial^+c,\\
        \bar\partial^-v=\kappa(\tau)\cos\alpha\bar\partial^-c.
    \end{cases}
\end{equation}

\subsection{Second order characteristic decompositions}

In this subsection, we derive the characteristic decomposition equations for the variable $c$. From \eqref{ch1_8}, it follows that $\nabla u$ and $\nabla v$ can be controlled through estimates on $\nabla c$. Therefore, our main objective is to obtain suitable estimates for the directional derivatives $\bar\partial^+c$ and $\bar\partial^-c$. 

\begin{lemma}
    The commutator relation between $\bar\partial^+$ and $\bar\partial^-$ takes the following form \cite{Jiequan2006}
    \begin{equation}\label{ch2_1}
        \bar\partial^-\bar\partial^+-\bar\partial^+\bar\partial^-=\frac{1}{\sin 2\omega}\left\{\left[\cos 2\omega\bar\partial^-\alpha-\bar\partial^+\beta \right]\bar\partial^++ \left[\cos 2\omega\bar\partial^+\beta-\bar\partial^-\alpha \right]\bar\partial^- \right\}.
    \end{equation}
\end{lemma}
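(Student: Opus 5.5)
The commutator identity is purely geometric: it holds for any two smooth unit vector fields $\bar\partial^+=\cos\alpha\partial_x+\sin\alpha\partial_y$ and $\bar\partial^-=\cos\beta\partial_x+\sin\beta\partial_y$ with $\alpha-\beta=2\omega\in(0,\pi)$. It does not use the Euler system. The plan is to compute the Lie bracket directly in Cartesian coordinates and then re-express the result in the frame $\{\bar\partial^+,\bar\partial^-\}$ using the inversion formula \eqref{derivatives}. Written purely in terms of $\alpha,\beta$, that formula reads
\[
\partial_x=\frac{\sin\alpha\,\bar\partial^--\sin\beta\,\bar\partial^+}{\sin(\alpha-\beta)},\qquad
\partial_y=\frac{\cos\beta\,\bar\partial^+-\cos\alpha\,\bar\partial^-}{\sin(\alpha-\beta)}.
\]
This agrees with \eqref{derivatives}, since $\sqrt{u^2+v^2}/(2c\cos\omega)=1/\sin 2\omega$ by \eqref{4}.

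First, for any smooth $f$, expand $\bar\partial^-\bar\partial^+f-\bar\partial^+\bar\partial^-f$. The second-order terms cancel because the coefficients are scalar functions and mixed partials commute. What remains is
\[
\bar\partial^-\bar\partial^+-\bar\partial^+\bar\partial^-=\bar\partial^-\alpha\,(-\sin\alpha\,\partial_x+\cos\alpha\,\partial_y)-\bar\partial^+\beta\,(-\sin\beta\,\partial_x+\cos\beta\,\partial_y).
\]
Each bracket is the derivative along the unit normal obtained by rotating $\bar\partial^\pm$ by $\pi/2$.

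Next, substitute the inversion formulas into these normal derivatives. For the first one,
\[
-\sin\alpha\,\partial_x+\cos\alpha\,\partial_y=\frac{(\sin\alpha\sin\beta+\cos\alpha\cos\beta)\bar\partial^+-(\sin^2\alpha+\cos^2\alpha)\bar\partial^-}{\sin 2\omega}=\frac{\cos 2\omega\,\bar\partial^+-\bar\partial^-}{\sin 2\omega}.
\]
By the same computation,
\[
-\sin\beta\,\partial_x+\cos\beta\,\partial_y=\frac{\bar\partial^+-\cos 2\omega\,\bar\partial^-}{\sin 2\omega}.
\]
Finally, insert these into the bracket and collect the coefficients of $\bar\partial^+$ and $\bar\partial^-$. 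The coefficient of $\bar\partial^+$ is $(\cos2\omega\,\bar\partial^-\alpha-\bar\partial^+\beta)/\sin2\omega$, and the coefficient of $\bar\partial^-$ is $(\cos2\omega\,\bar\partial^+\beta-\bar\partial^-\alpha)/\sin2\omega$. This is exactly \eqref{ch2_1}.

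There is no real obstacle. The only care needed is bookkeeping of signs: the orientation convention $\alpha-\beta=2\omega$, so that $\sin(\alpha-\beta)=\sin 2\omega>0$ in the supersonic region, and the order of the commutator. The computation requires $\sin2\omega\neq0$, which holds in the supersonic region where $0<\omega<\pi/2$. That restriction is consistent with the paper applying the identity away from the sonic degeneracy.
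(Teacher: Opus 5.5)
Your proof is correct. The Cartesian expansion of the bracket, the two rotated-normal identities $(\cos 2\omega\,\bar\partial^+-\bar\partial^-)/\sin 2\omega$ and $(\bar\partial^+-\cos 2\omega\,\bar\partial^-)/\sin 2\omega$, and the final collection of coefficients all check out and give \eqref{ch2_1} exactly, provided $\sin 2\omega\neq 0$. The paper gives no proof of its own and simply quotes the identity from \cite{Jiequan2006}; your direct computation (expanding the commutator, then substituting $\partial_x,\partial_y$ in terms of $\bar\partial^\pm$) is the standard way this identity is derived.
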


\begin{corollary}\label{corollary1} The characteristic decomposition of the variable $c$ is 
    \begin{equation}\label{ch2_2}
    \begin{split}
        c\bar\partial^-\bar\partial^+c&=\frac{\bar\partial^+c\bar\partial^+c}{2\mu^2(\tau)\cos^2\omega} + \frac{\bar\partial^+c\bar\partial^-c}{2\cos^2\omega}\left(\frac{1}{\mu^2(\tau)}-\kappa(\tau)\sin^2 2\omega \right) + \tau\kappa'(\tau)\bar\partial^+c\bar\partial^-c,\\
        c\bar\partial^+\bar\partial^-c&=\frac{\bar\partial^-c\bar\partial^-c}{2\mu^2(\tau)\cos^2 \omega}+\frac{\bar\partial^+c\bar\partial^-c}{2\cos^2 \omega}\left(\frac{1}{\mu^2(\tau)}-\kappa(\tau)\sin^2 2\omega \right)+ \tau\kappa'(\tau)\bar\partial^+c\bar\partial^-c.
    \end{split}
\end{equation}
\begin{proof}
    Applying the commutator identity \eqref{ch2_1} to the variable $u$, it follows that
    \begin{equation}\label{ch2_3}
        \bar\partial^-\bar\partial^+u-\bar\partial^+\bar\partial^-u=\frac{1}{\sin 2\omega}\left\{\left[\cos 2\omega\bar\partial^-\alpha-\bar\partial^+\beta \right]\bar\partial^+u+ \left[\cos 2\omega\bar\partial^+\beta-\bar\partial^-\alpha \right]\bar\partial^-u \right\}.
    \end{equation}
     Exploiting \eqref{ch1_7} and \eqref{ch1_8} into \eqref{ch2_3} and using \eqref{ch2_1} for $c$ we can easily derive \eqref{ch2_2}.
\end{proof}
\end{corollary}
For any constant $\nu>0$ and utilizing \eqref{ch2_2} we can derive
\begin{equation}\label{ch2_4}
    \begin{cases}
        \displaystyle
        c\bar\partial^-(c^{-\nu}\bar\partial^+c)= c^{-\nu}\left[\frac{\bar\partial^+c\bar\partial^+c}{2\mu^2(\tau)\cos^2 \omega} + \frac{\bar\partial^+c\bar\partial^-c}{2\cos^2 \omega}\left(\frac{1}{\mu^2(\tau)}-\kappa(\tau)\sin^2 2\omega \right) + (\tau\kappa'(\tau)-\nu)\bar\partial^+c\bar\partial^-c\right],\\[2mm]
        \displaystyle
        c\bar\partial^+(c^{-\nu}\bar\partial^-c)= c^{-\nu}\left[\frac{\bar\partial^-c\bar\partial^-c}{2\mu^2(\tau)\cos^2 \omega}+\frac{\bar\partial^+c\bar\partial^-c}{2\cos^2 \omega}\left(\frac{1}{\mu^2(\tau)}-\kappa(\tau)\sin^2 2\omega \right) + (\tau\kappa'(\tau)-\nu)\bar\partial^+c\bar\partial^-c\right].
    \end{cases}
\end{equation}

\begin{corollary} For the variable $\rho$, we have the following characteristic decomposition \cite{MR4549820}:
    \begin{equation}\label{ch2_5}
    \begin{split}
        c\bar\partial^+\bar\partial^-\rho&=\sin 2\omega \bar\partial^-\rho+\frac{\tau^4p''(\tau)}{4c \cos^2\omega}\left( \bar\partial^-\rho\bar\partial^-\rho+(f-1)\bar\partial^-\rho\bar\partial^+\rho\right),\\
        c\bar\partial^-\bar\partial^+\rho&=\sin 2\omega \bar\partial^+\rho+\frac{\tau^4p''(\tau)}{4c \cos^2\omega}\left( \bar\partial^+\rho\bar\partial^+\rho+(f-1)\bar\partial^-\rho\bar\partial^+\rho\right),
    \end{split}
\end{equation}
where $f=2\sin^2 \omega-\frac{8p'(\tau)\cos^4A}{\tau p''(\tau)}>0$ as $p''(\tau)>0, p'(\tau)<0$.
\end{corollary}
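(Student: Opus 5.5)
The decomposition \eqref{ch2_5} is cited from \cite{MR4549820}. To verify it, I would first rewrite the first-order relations in terms of $\rho$ and then repeat the commutator argument used for Corollary \ref{corollary1}.

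\textbf{Step 1: convert the first-order relations to $\rho$.} Since $\rho=1/\tau$ and $c^2=-\tau^2p'(\tau)$, the chain rule gives
\[
\bar\partial^\pm\rho=-\rho^2\bar\partial^\pm\tau .
\]
Using \eqref{ch1_4}, this becomes
\[
\bar\partial^\pm\rho=\frac{c\,\rho^2}{\tau\,(2p'+\tau p'')}\,\bar\partial^\pm c .
\]
So $\bar\partial^\pm\rho$ is a fixed multiple $g(\tau,c)$ of $\bar\partial^\pm c$. Combining this with \eqref{ch1_7} expresses every derivative of an angle through derivatives of $\rho$:
\begin{itemize}
\item $\bar\partial^+\beta$ and $\bar\partial^-\beta$ through $\bar\partial^\pm\rho$, via the first two lines of \eqref{ch1_7};
\item $\bar\partial^+\alpha$ and $\bar\partial^-\alpha$ through $\bar\partial^\pm\rho$, via \eqref{ch1_6}.
\end{itemize}
The coefficients involve $\kappa$, $\mu^2$, $\cot\omega$ and $\cos2\omega$. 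I would then recast $\kappa$, $\mu^2$ and $g$ in terms of $p'$, $p''$ and $\tau$. This is where the prefactor $\tau^4p''/(4c\cos^2\omega)$ should come from.

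\textbf{Step 2: compute the mixed derivative.} I would write
\[
\bar\partial^+\bar\partial^-\rho=\bar\partial^+\bigl(g\,\bar\partial^-c\bigr)=g\,\bar\partial^+\bar\partial^-c+\bar\partial^+g\;\bar\partial^-c .
\]
For the first term I substitute the second identity in \eqref{ch2_2}. For the second term, $\bar\partial^+g$ depends only on $\bar\partial^+\tau$ and $\bar\partial^+c$, and both are multiples of $\bar\partial^+\rho$. The result is a quadratic form
\[
A\,(\bar\partial^-\rho)^2+B\,\bar\partial^+\rho\,\bar\partial^-\rho .
\]
The $\kappa'(\tau)$ terms in \eqref{ch2_2} must cancel against $\bar\partial^+g$. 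Both involve $p'''$, so this cancellation is the consistency check I would verify first.

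\textbf{The main obstacle} is the linear term $\sin2\omega\,\bar\partial^-\rho$. A purely quadratic identity such as \eqref{ch2_2} cannot produce it after multiplication by $c$. So either the normalization of the derivative fields in \cite{MR4549820} differs from \eqref{conditions}, or the term comes from a Bernoulli-type relation applied to $\bar\partial^\pm\omega$. The typo $\cos^4A$ in the definition of $f$ suggests the reference uses different variables. I would therefore first compare the conventions of \cite{MR4549820} with \eqref{conditions}. Then I would redo Step 2 directly from the commutator \eqref{ch2_1} applied to $\rho$, with all angle derivatives replaced as in Step 1, and collect terms to identify $A$ and $B$ and to match $B/A=f-1$.

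\textbf{Step 3: positivity of $f$.} This is immediate once $f$ is identified, taking $\cos^4A$ to mean $\cos^4\omega$:
\[
f=2\sin^2\omega-\frac{8p'(\tau)\cos^4\omega}{\tau p''(\tau)} .
\]
The first term is nonnegative, and the second is nonnegative (and strictly positive for $\omega<\pi/2$) because $p'<0$ and $p''>0$ by \eqref{pressure_condition}. Hence $f>0$. The second identity in \eqref{ch2_5} follows by the $\pm$ symmetry, interchanging $\alpha\leftrightarrow\beta$ and $\bar\partial^+\leftrightarrow\bar\partial^-$.
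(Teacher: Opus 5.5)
\textbf{The gap is the linear term, and it cannot be closed.} Your Steps 1--2 are the right computation. But the ``main obstacle'' cannot be removed by changing conventions or adding a Bernoulli-type relation. As printed, \eqref{ch2_5} is not an identity for the steady system, so it cannot be proved in this setting.

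If $(u,v,\rho)(x,y)$ solves \eqref{Euler}, so does $(u,v,\rho)(\lambda x,\lambda y)$, and $\alpha,\beta,\omega,\tau$ are functions of the state. Hence $c\bar\partial^+\bar\partial^-\rho$ and every quadratic term scale like $\lambda^2$, while $\sin2\omega\,\bar\partial^-\rho$ scales like $\lambda$. Differentiated steady Bernoulli relations are homogeneous of first order, so nothing built from \eqref{Bernaoulli}, \eqref{ch1_8} and \eqref{ch2_1} can produce a zeroth-order coefficient. An identity containing the linear term would therefore force $\sin2\omega\,\bar\partial^-\rho\equiv0$. That fails in the centered simple wave, where $\bar\partial^-c<0$ by \eqref{3.2_7}, so $\bar\partial^-\rho\neq0$ there.

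A term of this type does arise for the pseudo-steady (self-similar) Euler equations. There $\bar\partial^\pm$ act on the pseudo-velocity $(u-\xi,v-\eta)$, and $\partial_\xi(u-\xi)=u_\xi-1$ injects zeroth-order terms. The undefined $A$ in $\cos^4A$ likewise suggests the formula was transcribed from a source with different notation. The paper gives no derivation beyond the citation and never uses \eqref{ch2_5} afterwards. So the correct outcome of your verification is a corrected statement.

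\textbf{What your plan actually proves.} First repair Step 1: \eqref{ch1_4} is missing a factor $2$, since $2c\,dc=-\tau(2p'+\tau p'')\,d\tau$. Correctly, $\bar\partial^\pm\tau=-\tfrac{\kappa\tau}{c}\bar\partial^\pm c$, so $g=\kappa\rho/c$ and $\bar\partial^+g=\tfrac{g}{c}(\kappa-1-\tau\kappa')\bar\partial^+c$. Insert the second line of \eqref{ch2_2}; the $\tau\kappa'$ terms cancel as you predicted. Using $\tfrac{c\tau(1+\kappa)}{\kappa}=\tfrac{\tau^4p''}{2c}$ and $\tfrac{\kappa}{1+\kappa}=-\tfrac{2p'}{\tau p''}$, you get
\[
c\bar\partial^+\bar\partial^-\rho=\frac{c\tau}{\kappa}\Bigl[\frac{1+\kappa}{2\cos^2\omega}(\bar\partial^-\rho)^2+\Bigl(\frac{1+\kappa\cos^2 2\omega}{2\cos^2\omega}+\kappa-1\Bigr)\bar\partial^+\rho\,\bar\partial^-\rho\Bigr]=\frac{\tau^4p''(\tau)}{4c\cos^2\omega}\Bigl((\bar\partial^-\rho)^2+(f-1)\bar\partial^-\rho\,\bar\partial^+\rho\Bigr),
\]
with $f=2\sin^2\omega+\tfrac{4\kappa\cos^4\omega}{1+\kappa}=2\sin^2\omega-\tfrac{8p'\cos^4\omega}{\tau p''}$. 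So $B/A=f-1$ exactly, and \eqref{ch2_5} holds precisely with the term $\sin2\omega\,\bar\partial^\mp\rho$ deleted. Your Step 3 and the $\pm$ symmetry then go through unchanged.

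If you instead redo the computation from \eqref{ch2_1}, correct two further typos first. \eqref{ch1_6} should read $\bar\partial^+\alpha=\mathcal{Q}\cos^2\omega\,\bar\partial^+\beta$ and $\bar\partial^-\beta=\mathcal{Q}\cos^2\omega\,\bar\partial^-\alpha$. The second line of \eqref{ch1_7} should involve $\bar\partial^-\alpha$, not $\bar\partial^-\beta$.
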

\begin{corollary}
    The variable $c$ satisfies the following second-order characteristic decomposition
    \begin{equation}\label{ch2_6}
        \begin{split}
        c\bar\partial^-\left(\frac{\cos\omega}{\bar\partial^+c}\right)&=-\left(\frac{\cos\omega}{\bar\partial^+c}\right)^2
            \left[     
                \frac{\bar\partial^+c\bar\partial^+c}{2\mu^2(\tau)\cos^3\omega} + \frac{\bar\partial^+c\bar\partial^-c}{\cos\omega}\left(\frac{1+\kappa(\tau)\cos^2 2\omega}{2\cos^2 \omega}+\tau\kappa'(\tau) \right)+ \frac{c\sin\omega\bar\partial^+c\bar\partial^-\omega}{\cos^2 \omega}
            \right],\\
           c\bar\partial^+\left(\frac{\cos\omega}{\bar\partial^-c}\right)&=-\left(\frac{\cos\omega}{\bar\partial^-c}\right)^2 \left[     
                \frac{\bar\partial^-c\bar\partial^-c}{2\mu^2(\tau)\cos^3\omega} + \frac{\bar\partial^+c\bar\partial^-c}{\cos\omega}\left(\frac{1+\kappa(\tau)\cos^2 2\omega}{2\cos^2 \omega}+\tau\kappa'(\tau) \right)+\frac{c\sin\omega\bar\partial^-c\bar\partial^+\omega}{\cos^2 \omega}
            \right].    
        \end{split}
    \end{equation}
\end{corollary}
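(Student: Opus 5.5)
Apply the quotient rule to $\cos\omega/\bar\partial^+c$ and substitute the decomposition \eqref{ch2_2} for $\bar\partial^-\bar\partial^+c$. Concretely,
\[
c\bar\partial^-\left(\frac{\cos\omega}{\bar\partial^+c}\right)=\frac{-c\sin\omega\,\bar\partial^-\omega}{\bar\partial^+c}-\frac{\cos\omega\, c\bar\partial^-\bar\partial^+c}{(\bar\partial^+c)^2}.
\]
Factoring out $-(\cos\omega/\bar\partial^+c)^2$, the first term becomes
\[
-\left(\frac{\cos\omega}{\bar\partial^+c}\right)^2\frac{c\sin\omega\,\bar\partial^+c\,\bar\partial^-\omega}{\cos^2\omega},
\]
which is exactly the last bracketed term of \eqref{ch2_6}. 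The second term becomes $-(\cos\omega/\bar\partial^+c)^2\cdot\frac{c\bar\partial^-\bar\partial^+c}{\cos\omega}$. Inserting the first line of \eqref{ch2_2}, the quadratic term $\frac{(\bar\partial^+c)^2}{2\mu^2\cos^2\omega}$ divided by $\cos\omega$ gives $\frac{(\bar\partial^+c)^2}{2\mu^2\cos^3\omega}$, as claimed. The mixed term gives
\[
\frac{\bar\partial^+c\bar\partial^-c}{\cos\omega}\left[\frac{1}{2\cos^2\omega}\left(\frac{1}{\mu^2}-\kappa\sin^22\omega\right)+\tau\kappa'\right].
\]

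It then remains to check the identity $\frac{1}{\mu^2}-\kappa\sin^22\omega=1+\kappa\cos^22\omega$. This follows from $1/\mu^2=1+\kappa$ by \eqref{9} together with $1-\sin^22\omega=\cos^22\omega$. This yields the first line of \eqref{ch2_6}.

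The second line follows symmetrically from the second line of \eqref{ch2_2}, applying the quotient rule with $\bar\partial^+$ acting on $\cos\omega/\bar\partial^-c$. This produces the term $-c\sin\omega\,\bar\partial^+\omega/\bar\partial^-c$, which again factors into the stated form.

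The only potential obstacle is bookkeeping: the sign of the $\omega$-derivative term and the trigonometric simplification of the mixed coefficient. There is no analytic difficulty, provided we work where $\bar\partial^\pm c\neq0$ so that the quotients make sense.
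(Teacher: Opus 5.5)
Your proof is correct and follows the paper's implicit route: the paper states \eqref{ch2_6} without a separate proof, as an immediate consequence of \eqref{ch2_2} via the quotient rule and the identity $1/\mu^2(\tau)-\kappa(\tau)\sin^2 2\omega=1+\kappa(\tau)\cos^2 2\omega$, which is exactly what you carry out. You also handle correctly the sign of the $\bar\partial^\mp\omega$ term and the restriction to points where $\bar\partial^\pm c\neq 0$.
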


\section{Supersonic and sonic-supersonic jet flows in the vacuum case}\label{section_3}

In this section, we study the global existence of supersonic and sonic-supersonic flow solutions to the problem \eqref{Euler}, \eqref{Boundary_v} for $u_0\geq c_0$. Besides the existence, we also establish some uniform interior estimates for the supersonic flow solutions with respect to $u_0\geq c_0$.

\subsection{Centered simple wave}

\begin{theorem}\label{principalpart}
    Let $(u,v,c)(x,y)=(\hat u,\hat v,\hat c)(\alpha)$ be a $C_+$ ($C_-$, respectively) centered simple wave solution with $y=x\tan\alpha,\;\alpha<\alpha_0$. Then the principal variables $(\hat u,\hat v,\hat c)(\alpha)$ satisfy the following system \cite{Courant1948}:
\begin{equation}\label{principalpart1}
    \begin{cases}
        \displaystyle
        \cos\alpha\frac{d\hat u}{d\alpha}+\sin\alpha\frac{d\hat v}{d\alpha}=0,\\[2mm]
        \displaystyle
         \frac{\hat u^2+\hat v^2}{2}+\frac{K((\gamma+1)\hat \tau-b)}{\gamma(\hat \tau-b)^{\gamma+1}}-\frac{2a}{\hat \tau}=\text{const.},\\[2mm]
         \hat u\sin\alpha-\hat v\cos\alpha=\hat c.
    \end{cases}
\end{equation}
\end{theorem}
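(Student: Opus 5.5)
We substitute the ansatz $(u,v,c)(x,y)=(\hat u,\hat v,\hat c)(\alpha)$, with $\alpha=\arctan(y/x)$ taken along the rays through the center $(0,0)$, into the irrotational system \eqref{2}, use Bernoulli's law \eqref{Bernaoulli1}, and identify the rays $y=x\tan\alpha$ with the $C_+$ characteristics. We treat the $C_+$ case; the $C_-$ case is symmetric, with $\alpha$ replaced by $\beta$ and the sign of the Mach-angle relation reversed.

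\textbf{First equation.} Because the solution is constant along each ray, $\bar\partial^+ u=\bar\partial^+ v=0$ holds along the ray direction $(\cos\alpha,\sin\alpha)$. We also have $\nabla\alpha=\frac{1}{r}(-\sin\alpha,\cos\alpha)$, so $u_x=-\frac{\sin\alpha}{r}\hat u'$ and $u_y=\frac{\cos\alpha}{r}\hat u'$, and likewise for $v$. The irrotationality condition $u_y=v_x$ then becomes $\cos\alpha\,\hat u'+\sin\alpha\,\hat v'=0$. This is the first equation of \eqref{principalpart1}.

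\textbf{Second equation.} By \eqref{7}, $\hat B$ is constant along streamlines. The incoming uniform state fixes $\hat B$ to be the same constant on every streamline entering the wave. Hence $\hat B(\alpha)$ is constant, which is the second line of \eqref{principalpart1} (this is exactly the expression in \eqref{Bernaoulli1}).

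\textbf{Third equation.} We have to show that the rays really are $C_+$ characteristics, i.e.\ that the ray direction makes the Mach angle $\omega$ with $(u,v)$. Insert the derivatives above into the first equation of \eqref{2}. After dividing by $\frac{1}{r}$, we get a linear system in $(\hat u',\hat v')$:
\[
-(c^2-u^2)\sin\alpha\,\hat u'-uv(\cos\alpha\,\hat u'-\sin\alpha\,\hat v')+(c^2-v^2)\cos\alpha\,\hat v'=0,
\]
together with $\hat u'=-\tan\alpha\,\hat v'$. For a nontrivial wave ($\hat v'\neq0$) the determinant must vanish. This gives
\[
(c^2-u^2)\sin^2\alpha+2uv\sin\alpha\cos\alpha+(c^2-v^2)\cos^2\alpha=0,
\]
that is, $(u\sin\alpha-v\cos\alpha)^2=c^2$. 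Equivalently, $\tan\alpha$ is a root of the characteristic equation, so it equals $\Lambda_\pm$.

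We pick the sign $u\sin\alpha-v\cos\alpha=c$, which is the consistency check against \eqref{5}. Since $\alpha=\theta+\omega$, we have $u\sin\alpha-v\cos\alpha=\sqrt{u^2+v^2}\sin(\alpha-\theta)=\sqrt{u^2+v^2}\sin\omega=c$ by \eqref{4}. The sign is positive because $\omega\in(0,\pi/2]$ and the $C_+$ direction lies counterclockwise from the velocity. This gives the third equation.

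\textbf{Main obstacle.} The delicate point is this last sign/branch selection. We must justify that the wave is a $C_+$ wave for $\alpha<\alpha_0$, with the sign fixed by the orientation convention $0\le\alpha-\beta\le\pi$, and rule out the $C_-$ root. Beyond that, the argument is pure algebra and the van der Waals law enters only through $\hat\tau=\hat\tau(\hat c)$. That relation is invertible because $(c^2)'(\tau)=-\tau(2p'+\tau p'')<0$ under \eqref{pressure_condition}, which closes the system for $(\hat u,\hat v,\hat c)$.
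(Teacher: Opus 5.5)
Your argument is correct. The paper gives no proof of this statement; it only cites Courant--Friedrichs. What you wrote is a self-contained version of the classical Prandtl--Meyer computation from that source: the first line comes from $u_y=v_x$ with $\nabla\alpha=r^{-1}(-\sin\alpha,\cos\alpha)$, the second from \eqref{7}, and the third from the vanishing determinant $c^2-(u\sin\alpha-v\cos\alpha)^2=0$ together with the orientation convention \eqref{4}--\eqref{5}.

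Compared with the bare citation, your version makes explicit that the van der Waals law enters only through the enthalpy in \eqref{Bernaoulli1}. You also correctly observe that for a $C_-$ wave the third relation reads $\hat u\sin\beta-\hat v\cos\beta=-\hat c$. The statement as written glosses over this; the paper sidesteps it by reflecting $y\mapsto -y$ and using $(\hat u,-\hat v,\hat c)$ for the lower wave.

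Two small tightenings are possible. First, $\alpha$ is by hypothesis the $C_+$ characteristic angle, so the third line follows at once from $\alpha=\theta+\omega$ and \eqref{4}. Your determinant step, which needs $(\hat u',\hat v')\neq0$ pointwise, is therefore really a consistency check. Second, constancy of $\hat B$ needs no appeal to streamlines entering from the uniform state. Under $u_y=v_x$ the momentum equations in \eqref{1} give $\nabla\hat B=0$ directly. Alternatively, \eqref{7} gives $0=u\hat B_x+v\hat B_y=-\hat B'(\alpha)\,\hat c/r$ with $\hat c>0$.
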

\begin{theorem}\label{simplewaveth}
Assume that the incoming flow is a uniform supersonic flow given by $(u,v,c)=(u_0,0,c_0)$ and $\tau>b$. Then there exist centered simple wave $S_+$ ($S_-$, respectively) consisting of straight $C_+$ ($C_-$, respectively) characteristic lines emanating from the nozzle endpoint $B$ ($D$, respectively), which connect the uniform flow state $(u,v,c)=(u_0,0,c_0)$ to the vacuum state. Moreover, the following relation holds:
\begin{equation}\label{Simple_1}
   \begin{cases}
       \hat u(\alpha)&=c\sin\alpha+\sqrt{\mathcal{F(\hat\tau)}+\mathcal{F}_1}\cos\alpha,\\
       \hat v(\alpha)&=-c\cos\alpha+\sqrt{\mathcal{F(\hat\tau)}+\mathcal{F}_1}\sin\alpha,\\
       \hat c(\alpha)&=\int_{\alpha_0}^\alpha \mu^2(\hat\tau)\sqrt{\mathcal{F(\hat\tau)}+\mathcal{F}_1}\;d\alpha+c_0,
   \end{cases}
\end{equation}
where 
\begin{equation}\label{Simple_2}
    \begin{split}
        \mathcal{F(\hat\tau)}&=-\frac{K}{\gamma(\hat\tau-b)^{\gamma+2}}\left((\gamma+1)(\gamma+2)\hat\tau^2-2(\gamma+2)b\hat\tau+2b^2 \right)+\frac{6a}{\hat\tau},\\
    \mathcal{F}_1&=u_0^2-c_0^2+\frac{K}{\gamma(\hat\tau_0-b)^{\gamma+2}}\left((\gamma+1)(\gamma+2)\hat\tau_0^2-2(\gamma+2)b\hat\tau_0+2b^2 \right)+\frac{6a}{\hat\tau_0}.
    \end{split}
\end{equation}
Further, we define
\begin{equation}\label{Simple_18}
    \alpha_v:=\int_{\tau_0}^\infty \frac{\sqrt{-p'(\tau)}\sqrt{u^2+v^2+\tau^2p'(\tau)}}{u^2+v^2}d\tau, 
\end{equation}
where $u^2+v^2=\mathcal{F}(\hat\tau)+\mathcal{F}_1+c^2.$
\end{theorem}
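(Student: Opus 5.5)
The plan is to build the centered simple wave at $B$ (the $C_+$ family) directly from the reduced system \eqref{principalpart1} of Theorem \ref{principalpart} and then obtain the wave at $D$ by the symmetry $y\mapsto -y$, $v\mapsto -v$. The fan variable is the characteristic angle $\alpha$. It runs from the value $\alpha_0$ carried by the incoming uniform state $(u_0,0,c_0)$ (there $\theta=0$ and $\omega=\omega_0$ with $\sin\omega_0=c_0/u_0$) toward the value at which $\hat c$ vanishes.

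The key step is to introduce the normal and tangential velocity components relative to the straight characteristic $y=x\tan\alpha$, namely
\[
N=\hat u\sin\alpha-\hat v\cos\alpha,\qquad T=\hat u\cos\alpha+\hat v\sin\alpha .
\]
\begin{itemize}
\item The third equation of \eqref{principalpart1} says $N=\hat c$.
\item Differentiating $T$ and using the first equation gives $T'=\hat u'\cos\alpha+\hat v'\sin\alpha+N=\hat c$.
\item Differentiating $N$ gives $N'=\hat u'\sin\alpha-\hat v'\cos\alpha+T$.
\end{itemize}
The Bernoulli relation (second equation) reads $N^2+T^2=\hat u^2+\hat v^2$. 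Together with $\hat c^2=-\hat\tau^2p'(\hat\tau)$, this expresses $T^2=\hat u^2+\hat v^2-\hat c^2$ as a function of $\hat\tau$.

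I will then check by direct differentiation that this function is exactly $\mathcal F(\hat\tau)+\mathcal F_1$. The constant $\mathcal F_1$ is fixed by the state at $\alpha_0$, where $T^2=u_0^2-c_0^2$. Inverting the formulas for $N$ and $T$ gives $\hat u=\hat c\sin\alpha+T\cos\alpha$ and $\hat v=-\hat c\cos\alpha+T\sin\alpha$, which are the first two lines of \eqref{Simple_1}.

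For the third line, I will combine $T'=\hat c$, i.e. $TT'=\hat cT$, with $TT'=\tfrac12\mathcal F'(\hat\tau)\hat\tau'$ and with $\hat c\hat c'=-\tfrac12(\tau^2p')'\hat\tau'$. This reduces to an ODE $\hat c'=\mu^2(\hat\tau)T$, where the factor $\mu^2=1/(1+\kappa)$ comes from $\kappa=-2p'/(2p'+\tau p'')$. This agrees with \eqref{ch1_7}, and integrating it gives the formula for $\hat c$. The orientation is chosen (fan opening clockwise at $B$) so that $\hat c$ decreases and $\hat\tau$ increases.

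Existence of the fan up to vacuum then amounts to solving the ODE for $\hat\tau(\alpha)$ globally. Rewriting it as $d\alpha/d\tau$, I obtain
\[
\frac{d\alpha}{d\tau}=\frac{\sqrt{-p'}\,\sqrt{T^2}}{T^2+c^2},
\]
which is the integrand in \eqref{Simple_18}. Since the integrand is positive, $\tau$ is strictly monotone in $\alpha$, so $\tau\in[\tau_0,\infty)$ parametrizes the wave. Sending $\tau\to\infty$ gives $c\to0$, and the total turning angle is $\alpha_v$.

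The main obstacle is finiteness of $\alpha_v$, together with positivity of $T^2$ along the fan (this matters in the sonic case $u_0=c_0$, where $T=0$ at $\alpha_0$). For the latter, I expect $T^2$ to be increasing in $\tau$ because $\mathcal F'>0$ when $p''>0$, using \eqref{pressure_condition} for $\tau>\tau_1$. For the former, as $\tau\to\infty$ we have $-p'\sim K(\gamma+1)\tau^{-\gamma-2}$, and $T^2$ stays bounded below by a positive limit. So the integrand behaves like $\tau^{-1-\gamma/2}$, which is integrable, and hence $\alpha_v<\infty$.

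Near the sonic endpoint the integrand behaves like $(\tau-\tau_0)^{1/2}$, so it is also fine there. The lower limit requires $\tau_0>\tau_1$ for convexity, and this is assumed.
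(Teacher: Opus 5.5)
Your overall route is the paper's: the components $N=\hat c$, $T=\hat g$ relative to the straight $C_+$ line, Bernoulli, and the relation $\hat c'=\mu^2(\hat\tau)\,T$ (the paper's \eqref{Simple_13}). Getting $T^2=\mathcal F(\hat\tau)+\mathcal F_1$ directly from Bernoulli via $\frac{d}{d\tau}(q^2-c^2)=\tau^2p''(\tau)$ is cleaner than the paper's detour through \eqref{Simple_14}. However, two steps are wrong as written.

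\textbf{Sign of $T'$.} The correct identity is $T'=\hat u'\cos\alpha+\hat v'\sin\alpha-N=-\hat c$ (this is \eqref{Simple_12}), not $+\hat c$. If you combine $T'=+\hat c$ with $TT'=\tfrac12\tau^2p''\hat\tau'$ and $\hat c\hat c'=-\tfrac12(\tau^2p')'\hat\tau'$, you get $\hat c'=-\mu^2T$, which contradicts the ODE you then state. The orientation is also not a free choice. The correct sign, together with $T=q\cos\omega>0$, \emph{forces} the fan at $B$ to open clockwise.

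\textbf{The integrand of \eqref{Simple_18} is not $d\alpha/d\tau$.} This is the more serious problem. Using $T'=-\hat c$ and $\tan\omega=\hat c/T$, one finds
\[
\frac{d\alpha}{d\tau}=-\frac{\tau^2p''(\tau)}{2\hat c\,T},\qquad
\frac{d\theta}{d\tau}=\frac{d\alpha}{d\tau}-\frac{d\omega}{d\tau}=-\frac{\sqrt{-p'(\tau)}\,T}{T^2+\hat c^2}.
\]
So \eqref{Simple_18} integrates the turning of the \emph{flow angle} $\theta$ (the Prandtl--Meyer relation), not of the characteristic angle. These are genuinely different functions. For example, at a sonic starting state the second vanishes like $(\tau-\tau_0)^{1/2}$, while the first blows up like $(\tau-\tau_0)^{-1/2}$. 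Two consequences follow.
\begin{enumerate}
\item A $\hat\tau(\alpha)$ obtained by inverting your formula would not satisfy \eqref{principalpart1}.
\item The claim that ``the total turning angle is $\alpha_v$'' is false for $\alpha$. Across the fan, $\alpha$ goes from $\alpha_0=\omega_0$ to $-\alpha_v$, a change of $\omega_0+\alpha_v$.
\end{enumerate}

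The paper keeps the two angles apart. It computes $\theta(\tau)$ separately from Bernoulli and $(u,v)=q(\cos\theta,\sin\theta)$ (its \eqref{Simple_7}), and takes $\alpha_v$ as the limit of $\theta$. That limit coincides with the limiting characteristic direction only because $\omega\to0$ as $c\to0$.

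The repair is easy:
\begin{enumerate}
\item Parametrize the fan by inverting the correct $\alpha(\tau)$, which is strictly monotone since $T>0$ and $p''>0$.
\item Apply your decay argument to the $\theta$-integrand. Your asymptotics are correct for it: $\tau^{-1-\gamma/2}$ at infinity and $(\tau-\tau_0)^{1/2}$ at a sonic start.
\end{enumerate}
Since $\tau^2p''/(\hat cT)$ also decays like $\tau^{-1-\gamma/2}$, the $\alpha$-range of the fan is finite as well.
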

\begin{proof}
Combining \eqref{ch1_4} and \eqref{ch1_7}, we obtain 
\begin{equation}\label{Simple_3}
         \begin{split}
             \bar\partial^-\alpha &=-\frac{\sqrt{u_0^2-2\int_{\tau_0}^\tau hp'(h)dh+\tau^2p'(\tau)}}{2\tau\sqrt{-p'(\tau)}}\bar\partial^-\tau\\
             &=-\frac{\sqrt{u_0^2+\tau_0^2p'(\tau_0)+\int_{\tau_0}^\tau h^2p''(h)dh}}{2\tau\sqrt{-p'(\tau)}}\bar\partial^-\tau.
         \end{split}
     \end{equation}
Integrating the above identity along the $C_-$ characteristic yields
\begin{equation}\label{Simple_4}
    \hat\alpha(\tau):=\alpha(x,y)=\alpha_0-\int_{\tau_0}^\tau \frac{\sqrt{u_0^2+\tau_0^2p'(\tau_0)+\int_{s_0}^s h^2p''(h)dh}}{2s\sqrt{-p'(s)}}ds.
\end{equation}
Differentiating \eqref{Simple_4} with respect to $\tau$ gives 
\begin{equation}\label{Simple_5}
    \frac{d\alpha}{d\tau}=-\frac{\sqrt{u_0^2+\tau_0^2p'(\tau_0)+\int_{\tau_0}^\tau h^2p''(h)dh}}{2\tau\sqrt{-p'(\tau)}}<0.
\end{equation}
From \eqref{Simple_4}-\eqref{Simple_5}, together with the relation $c^2=-\tau^2p'(\tau)$, it follows that $\tau(x,y)=\hat{\tau}(\alpha)$ and $(u,v,c)(x,y)=(\hat{u}, \hat{v}, \hat{c})(\alpha)$.
Next, applying the Bernoulli relation together with the incoming flow $(u_0,0,c_0)$, we obtain
\begin{equation}\label{Simple_6}
\frac{\hat u^2+\hat v^2}{2}+\hat c^2+\frac{K((\hat\tau-b)^2+\gamma\hat\tau(b+\gamma\hat\tau))}{\gamma(\hat\tau-b)^{\gamma+2}}
    =\frac{u_0^2}{2}+c_0^2+\frac{K((\tau_0-b)^2+\gamma\tau_0(b+\gamma\tau_0))}{\gamma(\tau_0-b)^{\gamma+2}}.
\end{equation}
Furthermore, using \eqref{Simple_6} and recalling $u=\sqrt{u^2+v^2}\cos\theta,\; v=\sqrt{u^2+v^2}\sin\theta$, we deduce 
\begin{equation}\label{Simple_7}
     \theta(\tau):=\theta(x,y)= -\int_{\tau_0}^{\hat\tau} \frac{\sqrt{-p'(\tau)}\sqrt{u^2+v^2+\hat\tau^2p'(\tau)}}{u^2+v^2}d\tau+\theta(\tau_0).
\end{equation} 
To facilitate the subsequent analysis, we decompose $(\hat u,\hat v)(\alpha)$ along the orthogonal directions $(\cos\alpha,\sin\alpha)$ and $(\sin\alpha,-\cos\alpha)$ as 
\begin{equation}\label{Simple_8}
    \hat g=\hat u \cos\alpha+\hat v\sin\alpha, \; \hat c=\hat u \sin\alpha-\hat v\cos\alpha.
\end{equation}
Accordingly, 
\begin{equation}\label{Simple_9}
    \hat u=\hat g \cos\alpha+\hat c\sin\alpha, \; \hat v=\hat g \sin\alpha-\hat c\cos\alpha,
\end{equation}
which immediately yields 
\begin{equation}\label{Simple_10}
    \hat g^2+\hat c^2=\hat u^2+\hat v^2.
\end{equation}
Differentiating \eqref{Simple_9} with respect to $\alpha$, we find
\begin{equation}\label{Simple_11}
    \begin{cases}
        \displaystyle
        \frac{d\hat u}{d\alpha}=\frac{d\hat g}{d\alpha}\cos\alpha-\hat g\sin\alpha+\frac{d\hat c}{d\alpha}\sin\alpha+\hat c\cos\alpha,\\[2mm]
        \displaystyle
        \frac{d\hat v}{d\alpha}=\frac{d\hat g}{d\alpha}\sin\alpha+\hat g\cos\alpha-\frac{d\hat c}{d\alpha}\cos\alpha+\hat c\sin\alpha.
    \end{cases}
\end{equation}
Making use of \eqref{Simple_11} and \eqref{principalpart1}, we obtain
\begin{equation}\label{Simple_12}
    \frac{d\hat g}{d\alpha}=-\hat c.
\end{equation}
Exploring \eqref{Simple_6}, \eqref{Simple_8} and \eqref{Simple_11} we have
\begin{equation}\label{Simple_13}
    \hat g(\alpha)=(1+\kappa(\tau))\frac{d\hat c}{d\alpha}.
\end{equation}
Combining \eqref{Simple_12} with \eqref{Simple_13}, we derive
\begin{equation}\label{Simple_14}
    \frac{d^2\hat c}{d\alpha^2}-\frac{\kappa'(\tau)\kappa(\tau)\mu^2(\tau)}{\sqrt{-p'(\tau)}}\left(\frac{d\hat c}{d\alpha}\right)^2+\hat c=0.
\end{equation}
Finally, imposing the initial conditions $\hat c(\alpha_0)=c_0$ and $\hat g(\alpha_0)=\sqrt{u_0^2-c_0^2}$ together with \eqref{Simple_14}, we obtain
\begin{equation}\label{Simple_15}
   \hat c:= c(x,y)=c_0+\int_{\alpha_0}^\alpha \mu^2(\hat\tau)\sqrt{\mathcal{F}(\hat\tau)+\mathcal{F}_1}d\alpha,
\end{equation}
where $\mathcal{F}(\tau)$ and $\mathcal{F}_1$ are given in \eqref{Simple_2}.

It then follows from \eqref{Simple_13} and \eqref{Simple_14} that
\begin{equation}\label{Simple_16}
    \hat g=(1+\kappa(\hat\tau))\hat c_\alpha=\sqrt{\mathcal{F}(\hat\tau)+\mathcal{F}_1},
\end{equation}
where $\mathcal F(\tau)$ and $\mathcal F_1$ are defined in \eqref{Simple_2}.

Substituting \eqref{Simple_15} and \eqref{Simple_16} into \eqref{Simple_9}, we conclude that 
\begin{equation}\label{Simple_17}
    \begin{split}
         u(\alpha)&=\sqrt{\mathcal{F(\tau)}+\mathcal{F}_1}\cos\alpha+c\sin\alpha,\\
         v(\alpha)&=\sqrt{\mathcal{F(\tau)}+\mathcal{F}_1}\sin\alpha-c\cos\alpha,
    \end{split}
\end{equation}
where $\mathcal{F}(\tau)$ and $\mathcal{F}_1$ are given in \eqref{Simple_2}.

Consider $\displaystyle{\alpha_v=\lim_{\hat\tau\to \infty}\theta}$ then
\begin{equation}
    \alpha_v=\int_{\tau_0}^{\infty} \frac{\sqrt{-p'(\tau)}\sqrt{u^2+v^2+\tau^2p'(\tau)}}{u^2+v^2}d\tau.
\end{equation}
A direct computation shows that the integral is absolutely convergent. Consequently, $\alpha_v$ is a bounded quantity. This completes the proof.
\end{proof}


\subsection{Supersonic jet flow expanding into the vacuum}\label{vacuum_subsection1}
\begin{figure}
    \centering
    \includegraphics[width=0.5\linewidth]{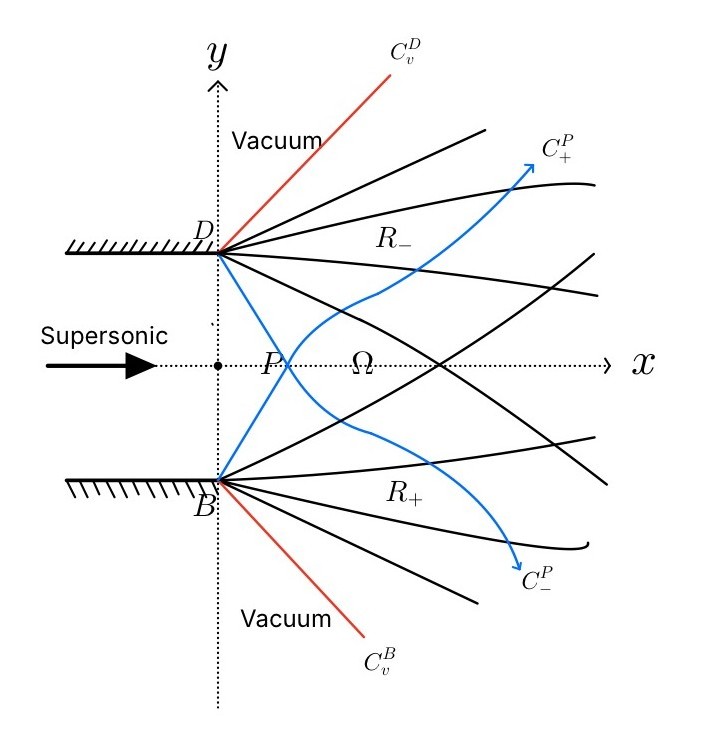}
    \caption {Configuration of a jet expanding into a vacuum from a straight two-dimensional nozzle.}
    \label{Simplewave}
\end{figure}
First, we consider the free boundary problem \eqref{Euler}--\eqref{Boundary_v} in the supersonic regime $u_0>c_0$. As shown by Courant and Friedrichs \cite{Courant1948}, the jet flow consists of two simple wave regions issuing from the nozzle endpoints $B$ and $D$; see Figure \ref{Simplewave}. Since the characteristics of a centered simple wave are straight lines, every point inside the upper and lower simple wave regions can be uniquely identified by its characteristic angle. Consequently, the characteristic angle is given by
\begin{equation}\label{3.2_1}
    \alpha=\arctan\left(\frac{y+l}{x}\right)
\quad\text{and}\quad
\alpha=\arctan\left(\frac{l-y}{x}\right),
\end{equation}
in the upper and lower simple wave regions, respectively.

Exploiting these geometric relations into \eqref{simplewaveth}, we obtain
\begin{equation}\label{3.2_2}
    \begin{split}
        (u,v,c)=(u_+(x,y),v_+(x,y),c_+(x,y))&=(\hat u, \hat v, \hat c)(\alpha), \quad \alpha_v<\arctan\left(\frac{y+l}{x}\right)<\alpha_0,\\
        (u,v,c)=(u_-(x,y),v_-(x,y),c_-(x,y))&=(\hat u, -\hat v, \hat c)(\alpha), \quad \alpha_v<\arctan\left(\frac{-y+l}{x}\right)<\alpha_0,
    \end{split}
\end{equation}
where $\displaystyle{\alpha_0=\arcsin\left(\frac{c_0}{u_0}\right)}$ and $\alpha_v$ is given in \eqref{Simple_18}.

The two centered simple waves intersect at the point
\[
P=(l\cot\alpha_0,0).
\]
To describe this interaction, we introduce the forward cross-characteristic curves $C_-^P$ and $C_+^P$ passing through $P$ in the simple wave regions $S_+$ and $S_-$, respectively. These curves serve as the initial curves for the subsequent interaction analysis.

We first determine the geometric properties of $C_\pm^P$ together with the corresponding boundary data. Parameterizing $C_-^P$ by the characteristic angle $\alpha$, we write
\begin{equation}\label{3.2_3}
    x=r(\alpha)\cos\alpha,\qquad
    y=r(\alpha)\sin\alpha,\qquad
    \alpha_v<\alpha<\alpha_0.
\end{equation}
Using the simple wave solution \eqref{Simple_1}, the Mach angle along $C_-^P$ is given by
\begin{equation}\label{3.2_4}
    \omega=\hat \omega(\alpha)=
    \arcsin\!\left(
    \frac{\hat c(\alpha)}
    {\sqrt{\hat u^2(\alpha)+\hat v^2(\alpha)}}
    \right),
    \qquad \text{on } C_-^P.
\end{equation}
Since $\beta=\alpha-2\hat \omega(\alpha)$ and the tangent vector to $C_-^P$ satisfies
\[
\sin\beta\,\frac{dx}{d\alpha}
-
\cos\beta\,\frac{dy}{d\alpha}
=0,
\]
it follows that
\begin{equation}\label{3.2_5}
    \frac{dr(\alpha)}{d\alpha}
    =
    -r(\alpha)\cot\bigl(2\hat \omega(\alpha)\bigr).
\end{equation}
Consequently,
\begin{equation}\label{3.2_6}
    r(\alpha)>0
    \quad\text{for}\quad
    \alpha\in(\alpha_v,\alpha_0]
    \qquad\text{and}\qquad
    \lim_{\alpha\to\alpha_v}r(\alpha)=\infty.
\end{equation}
Equation \eqref{3.2_6} implies that the cross-characteristic curve $C_-^P$ remains entirely inside the upper simple wave region and never intersects the vacuum boundary
\[
C_v^B=\left\{(x,y): (x,y)=(r\cos\alpha_v,\; -l+r\sin\alpha_v),\; r>0\right\}.
\]
Hence, the solution in the region enclosed by $\overline{BP}$, $C_-^P$ and $C_v^B$ is completely determined by the centered simple wave constructed in Theorem \ref{simplewaveth}. Throughout the sequel, this region will be denoted by $S_+$.

By symmetry, the same conclusion holds for the lower simple wave region. In particular, the cross-characteristic curve $C_+^P$ extends throughout the lower simple wave region without intersecting the vacuum boundary
\[
C_v^D=\left\{(x,y) : (x,y)=(r\cos\alpha_v,\; l-r\sin\alpha_v),\; r>0\right\},
\]
and the flow in the region bounded by $\overline{DP}$, $C_+^P$ and $C_v^D$ is described by the centered simple wave. For convenience, we continue to denote this region by $S_-$.

To determine the boundary data along the cross-characteristic curve $C_-^P$, we evaluate the characteristic derivative $\bar\partial^-c$. Since the characteristic angle satisfies
\[
\alpha=\arctan\left(\frac{y+l}{x}\right)
\quad\text{on } \quad C_-^P,
\]
it follows that
\begin{equation}\label{3.2_7}
    \bar\partial^-c=-\frac{2c\cos\omega\mu^2(\tau)\sqrt{\mathcal{F}(\tau)+\mathcal{F}_1}}{\sqrt{u^2+v^2}\sqrt{x^2+(y+l)^2}}<0 \quad \text{on} \quad C^P_- .
\end{equation}
By symmetry,
\begin{equation}\label{3.2_8}
    \bar\partial^+c<0 \qquad \text{on} \qquad C_+^P.
\end{equation}
Having determined the simple wave solutions in $S_-$ and $S_+$, it remains to analyze the interaction of the two wave families beyond the cross-characteristic curves $C_+^P$ and $C_-^P$. This interaction is described by the Euler system \eqref{Euler} supplemented with the boundary conditions
\begin{equation}\label{3.2_9}
\left.(u,v,c)\right|_{C_-^P} =(u_+(x,y),v_+(x,y),c_+(x,y)),\quad
\left.(u,v,c)\right|_{C_+^P}=(u_-(x,y),v_-(x,y),c_-(x,y)).
\end{equation}
\begin{lemma}\label{3.2_l1}
There exists a sufficiently small constant $\varepsilon>0$ such that the Goursat problem \eqref{Euler}--\eqref{3.2_9} admits a solution in the domain $\Omega_\varepsilon$, enclosed by the cross-characteristic curves $C_\pm^P$ and the level curve $c(x,y)=c_0-\varepsilon$. Moreover, the solution satisfies $\displaystyle{\bar{\partial}^\pm c<0,}$ throughout $\Omega_\varepsilon$.
\end{lemma}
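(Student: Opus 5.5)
The plan is the standard route for Goursat problems of characteristic type at a point of degeneracy-free interaction: local existence near $P$, then continuation controlled by a priori bounds on $\bar\partial^\pm c$. I work with the system in the characteristic form \eqref{ch1_6}--\eqref{ch1_8}, with unknowns $(\alpha,\beta,c)$, or equivalently $(u,v,c)$. In this form the problem is a Goursat problem for a strictly hyperbolic system as long as $c>0$ and the Mach angle stays away from $\pi/2$.

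Near $P$ the flow is uniformly supersonic, since $u_0>c_0$ and $\omega$ is close to $\alpha_0<\pi/2$. The boundary data along $C_\pm^P$ are smooth and satisfy the compatibility conditions at $P$: both simple waves equal the uniform state $(u_0,0,c_0)$ there. The classical local existence theory for Goursat problems of quasilinear hyperbolic systems (Li--Yu type) therefore gives a $C^1$ solution in a small neighbourhood of $P$ bounded by $C_\pm^P$.

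The key step is to propagate the sign $\bar\partial^\pm c<0$, which will also give the a priori $C^1$ bounds needed to extend the solution up to the level curve $c=c_0-\varepsilon$. The sign holds on the boundary by \eqref{3.2_7}--\eqref{3.2_8}. I use the decomposition \eqref{ch2_4}, choosing $\nu>0$ large enough that $\tau\kappa'(\tau)-\nu<0$ is no obstacle. For $\tau$ near $\tau_0$ the coefficient of $\bar\partial^+c\,\bar\partial^-c$ is bounded, and I instead regard \eqref{ch2_4} as a linear ODE along $C_-$ for $W_+:=c^{-\nu}\bar\partial^+c$:
\[
c\,\bar\partial^-W_+ = W_+\,G,\qquad G=\frac{\bar\partial^+c}{2\mu^2\cos^2\omega}+\bar\partial^-c\Big(\frac{1}{2\mu^2\cos^2\omega}-\frac{\kappa\sin^22\omega}{2\cos^2\omega}+\tau\kappa'-\nu\Big).
\]
Since $G$ is bounded on any region where the solution is $C^1$, integrating along $C_-$ from $C_+^P$ shows that $W_+$ cannot change sign, so $\bar\partial^+c<0$. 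Symmetrically, $\bar\partial^-c<0$ follows from integration along $C_+$ starting on $C_-^P$.

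For upper bounds I use that in the bracket of \eqref{ch2_4} all terms except possibly the last have a favourable sign structure. Writing $\bar\partial^\pm c=-|\cdot|$, I obtain Riccati-type inequalities. Applying \eqref{ch2_6} to $\cos\omega/\bar\partial^\pm c$ bounds $|\bar\partial^\pm c|$ above by the boundary values times constants depending on $\varepsilon$. Monotonicity also gives $c\le c_0$ and hence $c\ge c_0-\varepsilon>0$ in $\Omega_\varepsilon$.

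Both $\bar\partial^\pm c<0$ mean that $c$ decreases along every characteristic moving away from $P$. Since the region is bounded by $C_\pm^P$ and the curve $c=c_0-\varepsilon$, the domain of determinacy is bounded. The $C^1$ bounds, together with the fact that $\omega$ stays bounded away from $0$ and $\pi/2$ (from Bernoulli's law and $c\in[c_0-\varepsilon,c_0]$), let me continue the local solution by the usual extension argument until it fills $\Omega_\varepsilon$.

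I expect the main obstacle to be the geometry: showing that the level curve $c=c_0-\varepsilon$ together with $C_\pm^P$ actually encloses a bounded domain where the characteristics stay inside. I would handle this by choosing $\varepsilon$ small and comparing with the simple-wave data. The term $\tau\kappa'(\tau)$ is controlled by the standing assumption $\kappa'>0$ and the boundedness of $\tau$ near $\tau_0$.
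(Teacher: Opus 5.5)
Your proposal is correct and follows the paper's route: local solvability of the Goursat problem near $P$ from the classical theory, then propagation of $\bar\partial^\pm c<0$ from the boundary signs \eqref{3.2_7}--\eqref{3.2_8} via the characteristic decomposition. Your reading of \eqref{ch2_4} as a linear homogeneous ODE for $\bar\partial^+c$ along $C_-$ (and symmetrically for $\bar\partial^-c$ along $C_+$) is exactly what makes the sign propagation work. The continuation step and the upper bounds are not needed for this lemma. The paper simply takes $\varepsilon=c_0-\sup_{\overline{P_-P_+}}c$, which places $\Omega_\varepsilon$ inside the small local-existence triangle. Your extension argument is essentially what the paper carries out later, in Lemmas \ref{3.2_l2}--\ref{3.2_l3}.
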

\begin{proof}
To facilitate the proof, fix a sufficiently small $\eta>0$, the vertical line
\[
x=l\cot\alpha_0+\eta,
\]
intersects the characteristic curves $C_-^P$ and $C_+^P$ at the points $P_-$ and $P_+$, respectively; see Figure \ref{levelcurve}. Consequently, the boundary value problem \eqref{Euler}-\eqref{3.2_9} reduces to a Goursat problem posed on the characteristic curves $\widehat{PP_-}$ and $\widehat{PP_+}$.

By the classical theory of the method of characteristics (see, \cite{liboundaryvalue}), there is a sufficiently small $\eta>0$ for which the Goursat problem admits a unique classical solution in the domain enclosed by the characteristic segments $\widehat{PP_-}$, $\widehat{PP_+}$ and the line segment $\overline{P_-P_+}$.

Furthermore, combining the characteristic decompositions established in Corollary \ref{corollary1} with \eqref{3.2_7}--\eqref{3.2_8}, we conclude that
\begin{equation}\label{3.2_}
    \bar\partial^\pm c<0 \quad\text{in} \quad \Omega_\varepsilon.
\end{equation}
Define
\[
\varepsilon
=
c_0-
\sup_{\overline{P_-P_+}}c.
\]
Since $c<c_0$ inside the local interaction region, we have $\varepsilon>0$. Therefore, the Goursat problem admits a classical solution in the domain $\Omega_\varepsilon$, bounded by the characteristic curves $C_\pm^P$ and the level curve $c=c_0-\varepsilon$. This completes the proof.
\end{proof}
\begin{lemma}\label{3.2_l2}
Assume that the Goursat problem \eqref{Euler}--\eqref{3.2_9} admits a classical solution in the domain $\Omega_\varepsilon$ for some $\varepsilon\in(0,c_0)$. Then the solution satisfies
\begin{equation}\label{3.2_10}
    \mathcal{H}
    <
    \frac{\bar\partial^+c}{c}
    <0,
    \qquad
    \mathcal{H}
    <
    \frac{\bar\partial^-c}{c}
    <0
    \quad\text{in }\Omega_\varepsilon,
\end{equation}
where
\[
\mathcal{H}
=
\inf_{C_-^P}
\frac{\bar\partial^-c}{c}.
\]
\end{lemma}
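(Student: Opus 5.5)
The plan is to prove the two-sided bound by a maximum-principle type argument along characteristics, based on the decomposition \eqref{ch2_4}. The upper bound $\bar\partial^\pm c<0$ is already supplied by Lemma \ref{3.2_l1} (more precisely, by the argument there, which I will reuse on the whole of $\Omega_\varepsilon$). The substantial part is the lower bound $\mathcal{H}<\bar\partial^\pm c/c$.

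\textbf{Negativity on all of $\Omega_\varepsilon$.}
On the boundary, \eqref{3.2_7}--\eqref{3.2_8} give $\bar\partial^-c<0$ on $C_-^P$ and $\bar\partial^+c<0$ on $C_+^P$. By symmetry of the configuration the infimum of $\bar\partial^+c/c$ over $C_+^P$ equals $\mathcal{H}$.

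Suppose $\bar\partial^+c$ first vanishes at some point $Q$. Consider the characteristic triangle cut out by the backward $C_\pm$ characteristics through $Q$. Along the $C_-$ characteristic through $Q$, \eqref{ch2_2} can be written as a linear ODE
\[
c\,\bar\partial^-(\bar\partial^+c)=A\,\bar\partial^+c,
\qquad
A=\frac{\bar\partial^+c}{2\mu^2\cos^2\omega}+\frac{\bar\partial^-c}{2\cos^2\omega}\Bigl(\frac1{\mu^2}-\kappa\sin^22\omega\Bigr)+\tau\kappa'\bar\partial^-c .
\]
Since $A$ is bounded, $\bar\partial^+c$ keeps its sign from the initial curve. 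This contradicts $\bar\partial^+c(Q)=0$, so $\bar\partial^+c<0$ throughout. The same argument gives $\bar\partial^-c<0$.

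\textbf{Lower bound.}
I will work with $W_\pm=\bar\partial^\pm c/c$, which is the case $\nu=1$ of \eqref{ch2_4}. This gives
\[
c\,\bar\partial^-W_+=W_+\Bigl[\frac{\bar\partial^+c}{2\mu^2\cos^2\omega}+\bar\partial^-c\Bigl(\frac{1/\mu^2-\kappa\sin^22\omega}{2\cos^2\omega}+\tau\kappa'-1\Bigr)\Bigr].
\]
The bracket consists of terms that are products of the negative quantities $\bar\partial^\pm c$ with coefficients.
\begin{itemize}
\item $\frac1{\mu^2}=1+\kappa>0$.
\item $\frac{1/\mu^2-\kappa\sin^22\omega}{2\cos^2\omega}=\frac{1+\kappa\cos^2 2\omega}{2\cos^2\omega}\ge\frac12$.
\item $\tau\kappa'>0$ by the standing assumption $\kappa'>0$.
\end{itemize}
Hence the coefficient of $\bar\partial^-c$ is at least $-\tfrac12$. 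That alone does not fix its sign, which is why the choice of $\nu$ matters.

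I would rather choose $\nu$ with $0<\nu\le\tfrac12$, so that every coefficient in the bracket of \eqref{ch2_4} is positive. Then $c\,\bar\partial^-(c^{-\nu}\bar\partial^+c)=c^{-\nu}\bar\partial^+c\cdot(\text{negative})$, which is positive. So $c^{-\nu}\bar\partial^+c$ is increasing along $C_-$ directions, i.e. it cannot drop below its value at the foot on $C_+^P$. Equivalently,
\[
\bar\partial^+c\ \ge\ \Bigl(\frac{c}{c_{\rm foot}}\Bigr)^{\nu}\,\bar\partial^+c\big|_{\rm foot}.
\]
Here $c\le c_{\rm foot}$ because $c$ decreases along characteristics, since $\bar\partial^\pm c<0$.

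Converting to $W_+$:
\[
W_+\ \ge\ \Bigl(\frac{c_{\rm foot}}{c}\Bigr)^{1-\nu}W_+\big|_{\rm foot}.
\]
The factor $(c_{\rm foot}/c)^{1-\nu}$ exceeds $1$. Because $W_+$ is negative, this factor makes the lower bound worse, not better, so the pure $\nu$-trick does not directly yield $W_+>\mathcal{H}$.

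This is where I expect the main obstacle. I would attempt to handle it by a direct comparison for $W_\pm$ with $\nu=1$. At a first point where $W_+=\mathcal{H}$ (an interior minimum along the $C_-$ direction), I would need $\bar\partial^-W_+\ge0$, i.e. the bracket must be $\le0$.

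To get this, I would combine the decomposition with the explicit simple-wave values on $C_\pm^P$ from \eqref{3.2_7}. On $C_\pm^P$ the quantity $\bar\partial^\mp c/c$ decays like $1/r$, and I would check that the infimum $\mathcal{H}$ is attained at $P$. I would then argue via a continuity (bootstrap) argument in $\varepsilon$: the set of $\varepsilon$ for which the strict bounds hold is open and closed, with strictness preserved by the strict positivity of the coefficients.

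Finally, I would remark that the strict inequality of the bounds ($\mathcal{H}<W_\pm$, rather than $\le$) follows from the strong form of the ODE comparison.
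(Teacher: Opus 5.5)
\paragraph{Verdict: genuine gap.} Your proof of $\bar\partial^\pm c<0$ through the linear equation $c\,\bar\partial^-(\bar\partial^+c)=A\,\bar\partial^+c$ is fine. It is actually a cleaner justification than the paper's appeal to Lemma \ref{3.2_l1}. You are also right that the $\nu\le\frac12$ trick only gives the weaker bound $W_+\ge (c_{\mathrm{foot}}/c)^{1-\nu}\mathcal H$.

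However, the lower bound $\mathcal H<W_\pm$, which you yourself call the substantial part, is never proved. You correctly say that the bracket in the $\nu=1$ identity must be negative at a first touching point. You then propose to get this from three things: the explicit simple-wave values on $C_\pm^P$, the infimum being attained at $P$, and a bootstrap in $\varepsilon$ preserving ``strict positivity of the coefficients''. None of these works.
\begin{itemize}
\item As you showed, the coefficient $\frac{1+\kappa\cos^22\omega}{2\cos^2\omega}+\tau\kappa'-1$ of $\bar\partial^-c$ is not sign-definite, so there is no positivity to preserve.
\item Boundary data, or where $\mathcal H$ is attained (which is not needed), say nothing about the relative size of $\bar\partial^+c$ and $\bar\partial^-c$ at an interior touching point. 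That relative size is exactly what decides the sign of the bracket.
\item You also do not explain how $W_+$ is controlled on $C_-^P$, where it is not given by the data.
\end{itemize}

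\paragraph{The missing idea.} You need to run the first-touching argument for $W_+$ and $W_-$ \emph{jointly}; this is why $\mathcal H$ is defined through $W_-$ on $C_-^P$.

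Let $Q$ be the first point where $\min\{W_+,W_-\}$ reaches $\mathcal H$: first along $C_-^P$ starting from $P$, and then in the order of the characteristic quadrilaterals $\Pi_Q$. Say $W_+(Q)=\mathcal H$. Then $\bar\partial^-W_+(Q)\le0$. At the same time $W_-(Q)\ge\mathcal H=W_+(Q)$: on $C_-^P$ this is just the definition of $\mathcal H$, and in the interior it is the inductive hypothesis. Hence $\bar\partial^+c\le\bar\partial^-c<0$, and so $(\bar\partial^+c)^2\ge\bar\partial^+c\,\bar\partial^-c$ at $Q$.

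Inserting this into your $\nu=1$ identity gives
\[
c^2\,\bar\partial^-W_+\ \ge\ \bar\partial^+c\,\bar\partial^-c\Bigl(\frac{2+\kappa(1+\cos^22\omega)}{2\cos^2\omega}-1+\tau\kappa'\Bigr)>0\quad\text{at }Q,
\]
since $\frac{1}{\cos^2\omega}\ge1$. In words, the term $\frac{(1+\kappa)(\bar\partial^+c)^2}{2\cos^2\omega}$ absorbs the bad contribution $-\bar\partial^+c\,\bar\partial^-c$ coming from $\nu=1$.

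This contradicts $\bar\partial^-W_+(Q)\le0$. The case $W_-(Q)=\mathcal H$ is symmetric: it uses the second decomposition, together with $W_+\ge\mathcal H$ on $C_+^P$ by the symmetry of the data.

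This is precisely the mechanism of the paper's estimate \eqref{3.2_12} at the points $H$ and $Q$. Without the ordering $\bar\partial^+c\le\bar\partial^-c$ at the touching point, your sketch does not close.
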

\begin{proof}
By Lemma \ref{3.2_l1}, the characteristic derivatives satisfy
\[
\bar{\partial}^{\pm}c<0
\quad\text{in}\quad\Omega_\varepsilon.
\]
Therefore, it remains to establish the lower bound
\[
\mathcal{H}<\frac{\bar{\partial}^{\pm}c}{c}.
\]
We first observe that, in view of \eqref{3.2_7}, \eqref{3.2_8} and the Bernoulli relation \eqref{Bernaoulli},
\begin{equation}\label{3.2_11}
\frac{ u^2+ v^2}{2}+ c^2+\frac{K((\tau-b)^2+\gamma\tau(b+\gamma\tau))}{\gamma(\tau-b)^{\gamma+2}}
     =\frac{u_0^2}{2}+c_0^2+\frac{K((\tau_0-b)^2+\gamma\tau_0(b+\gamma\tau_0))}{\gamma(\tau_0-b)^{\gamma+2}}.
\end{equation}
Consequently, the quantity $\mathcal{H}$ is well defined and finite. Moreover, from the boundary data prescribed on $C_+^P$, we have
\[
\frac{\bar{\partial}^{+}c}{c}>\mathcal{H}
\quad\text{on} \quad C_+^P.
\]
We next show that
\[
\frac{\bar\partial^+c}{c}>\mathcal{H}
\quad\text{on}\quad C_-^P.
\]
Assume that $\frac{\bar\partial^+c}{c}\leq \mathcal{H}$ on $C_-^P$. Then there exists a first point $H\in C_-^P$ such that
\[
\frac{\bar\partial^+c}{c}>\mathcal{H}
\quad\text{on }\widehat{PH},
\quad\text{while}\quad
\left(\frac{\bar\partial^+c}{c}\right)(H)=\mathcal{H}.
\]
By the choice of $H$, it follows that
\[
\bar\partial^-\!\left(\frac{\bar\partial^+c}{c}\right)(H)\leq0.
\]
On the other hand, using the first equation of \eqref{ch2_4} along with \eqref{ch2_2}, we obtain
\begin{equation}\label{3.2_12}
    \begin{split}
        c\bar\partial^-\left(\frac{\bar\partial^+c}{c}\right)&=\frac{1}{c}\left(\frac{\bar\partial^+c\bar\partial^+c}{2\mu^2(\tau)\cos^2 \omega} + \frac{\bar\partial^+c\bar\partial^-c}{2\cos^2 \omega}\left(\frac{1}{\mu^2(\tau)}-\kappa(\tau)\sin^2 2\omega \right) + \tau\kappa'(\tau)\bar\partial^+c\bar\partial^-c-\bar\partial^+c\bar\partial^-c \right)\\
       &>\frac{\bar\partial^+c}{c}\left(\frac{1+\kappa(\tau)}{2\cos^2 \omega}\bar\partial^+c-\bar\partial^-c \right)\\
       &>\frac{\bar\partial^+c\bar\partial^-c}{2c\cos^2 \omega}\left(\kappa(\tau)+\cos 2\omega\right)>0 \quad \text{at} \quad H,
    \end{split}
\end{equation}
which contradicts
\[
\bar\partial^-\!\left(\frac{\bar\partial^+c}{c}\right)(H)\leq0.
\]
Hence,
\[
\frac{\bar\partial^+c}{c}>\mathcal{H}
\qquad\text{on }C_-^P.
\]
By the same argument, we also obtain
\[
\frac{\bar\partial^-c}{c}>\mathcal{H}
\qquad\text{on }C_+^P.
\]
Let $Q$ be an arbitrary point in $\Omega_\varepsilon$. Since the solution is constructed by the method of characteristics, the backward $C_+$ ($C_-$, respectively) characteristic passing through $Q$ remains inside $\Omega_\varepsilon$ until it intersects the cross-characteristic curve $C_-^P$ ($C_+^P$, respectively) at a point $Q_+$ ($Q_-$, respectively); see Figure \ref{levelcurve}. Denote by $\Pi_Q$ the characteristic quadrilateral enclosed by the characteristic arcs $\widehat{PQ_-}$, $\widehat{PQ_+}$, $\widehat{Q_-Q}$ and $\widehat{Q_+Q}$.

We shall prove that the estimate \eqref{3.2_10} propagates throughout $\Pi_Q$. Assume that \eqref{3.2_10} holds in $\Pi_Q\setminus\{Q\}$. Suppose, for contradiction, that
\[
\left(\frac{\bar\partial^+c}{c}\right)(Q)=\mathcal{H},
\qquad
\left(\frac{\bar\partial^-c}{c}\right)(Q)\geq\mathcal{H}.
\]
Then, by the assumption,
\[
\bar\partial^-\left(\frac{\bar\partial^+c}{c}\right)(Q)\leq0.
\]
However, applying the same computation as in \eqref{3.2_12} yields
\[
\bar\partial^-\left(\frac{\bar\partial^+c}{c}\right)(Q)>0,
\]
which is a contradiction. Hence,
\[
\left(\frac{\bar\partial^+c}{c}\right)(Q)>\mathcal{H}.
\]
An analogous argument shows that
\[
\left(\frac{\bar\partial^-c}{c}\right)(Q)>\mathcal{H}.
\]
Since $Q$ is arbitrary, the desired estimate follows by the method of continuity. This completes the proof.
\end{proof}
\begin{lemma}\label{3.2_l3}
The Goursat problem \eqref{Euler}--\eqref{3.2_9} admits a unique global classical solution in the domain
\[
\Omega=\bigcup_{\varepsilon\in(0,c_0)}\Omega_\varepsilon.
\]
Furthermore, $\Omega$ is an infinite sectorial domain enclosed by the characteristic curves $C_\pm^P$.
\end{lemma}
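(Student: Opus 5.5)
The plan is a continuity argument in the level parameter $\varepsilon$. Set
\[
\varepsilon^*=\sup\{\varepsilon\in(0,c_0):\ \text{the Goursat problem \eqref{Euler}--\eqref{3.2_9} has a classical solution in }\Omega_\varepsilon\}.
\]
Lemma \ref{3.2_l1} gives $\varepsilon^*>0$. The aim is to show $\varepsilon^*=c_0$. Uniqueness on each $\Omega_\varepsilon$ is standard for Goursat problems solved by characteristics, so the local solutions patch together into one solution on $\Omega=\bigcup\Omega_\varepsilon$.

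Suppose instead that $\varepsilon^*<c_0$. On $\Omega_{\varepsilon}$ with $\varepsilon<\varepsilon^*$ we have $c\ge c_0-\varepsilon^*>0$. This gives a bounded specific volume $\tau\le\tau^*<\infty$ and, by the Bernoulli relation \eqref{Bernaoulli}, a bounded speed. Moreover $\sin\omega=c/\sqrt{u^2+v^2}$ stays in a compact subset of $(0,1)$ as long as $c<c_0$ and the flow is strictly supersonic. Lemma \ref{3.2_l2} gives the uniform gradient bound $\mathcal H<\bar\partial^\pm c/c<0$, with $\mathcal H$ independent of $\varepsilon$. Through \eqref{ch1_7}, \eqref{ch1_8} and \eqref{derivatives}, this yields a uniform $C^1$ bound for $(u,v,c)$ and for the angles $\alpha,\beta$, since $\cos\omega$ is bounded below. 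With these a priori bounds, the local existence theory for the Goursat problem and for the characteristic Cauchy problem (the same reference \cite{liboundaryvalue} used in Lemma \ref{3.2_l1}) applies with a uniform step size. From data on the level curve $c=c_0-\varepsilon^*+\delta$ we can therefore extend the solution to a uniform neighborhood. Because $\bar\partial^\pm c<0$ strictly, $c$ decreases strictly along both characteristic families, so the level curves $c=\mathrm{const}$ are space-like (non-characteristic) graphs. The extended domain then contains $\Omega_{\varepsilon^*+\delta'}$, which contradicts maximality.

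For the geometry of $\Omega$, we must check that the extension never closes up before $c\to0$. The key fact is that $C_\pm^P$ extend to infinity without meeting the vacuum boundary, by \eqref{3.2_6}, and carry data with $c\to0$ only at infinity. Since $c$ is monotone decreasing along the forward characteristics and bounded away from zero on each $\Omega_\varepsilon$, each $\Omega_\varepsilon$ is a bounded characteristic curvilinear region. The level curves $c=c_0-\varepsilon$ join a point of $C_-^P$ to a point of $C_+^P$, and these points run off to infinity as $\varepsilon\to c_0$. Hence $\Omega$ is the unbounded region lying between $C_+^P$ and $C_-^P$, that is, the infinite sector. Showing that the region exhausts the whole sector, rather than some subdomain, uses the fact that every point between the two curves lies on a forward characteristic from them, together with uniform control of the characteristic slopes via $\alpha,\beta$.

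The main obstacle is the uniformity needed in the extension step, namely keeping $\omega$ away from $\pi/2$ and $0$ and ensuring the characteristics do not become tangent to the level curves. This needs a bound from Bernoulli linking $c<c_0$ to strict supersonicity, together with a lower bound on $|\bar\partial^\pm c|$ only in the weak form (sign), while the upper bound comes from Lemma \ref{3.2_l2}. If the ODE comparison in Lemma \ref{3.2_l2} alone is not sufficient, a fallback is to use the decomposition \eqref{ch2_6} for $\cos\omega/\bar\partial^\pm c$ to bound the derivatives away from zero on compact parts.
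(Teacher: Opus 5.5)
Your continuation step (the supremum $\varepsilon^*$, the $\varepsilon$-independent bounds from Bernoulli and Lemma \ref{3.2_l2}, and extension across the non-characteristic level curve) is essentially the paper's argument, at a comparable level of rigor. The gap is in the second assertion: that $\Omega$ is the \emph{whole} infinite sector between $C_\pm^P$.

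\textbf{Why your argument does not reach the sector claim.} Knowing that the endpoints $l_\varepsilon\cap C_\pm^P$ escape to infinity says nothing about the interior of the level curves $l_\varepsilon=\{c=c_0-\varepsilon\}$. A priori they could accumulate, as $\varepsilon\to c_0$, on a curve at finite distance on which $c\to0$, i.e.\ a vacuum forming inside the interaction region. Then $\bigcup_\varepsilon\Omega_\varepsilon$ would be a proper subdomain of the sector. Your last sentence ("every point between the two curves lies on a forward characteristic \dots\ uniform control of the slopes") states what has to be proved rather than proving it.

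\textbf{The missing idea.} You need the estimate $\operatorname{dist}(P,l_\varepsilon)\to+\infty$ as $\varepsilon\to c_0$. The paper proves it in three steps.

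First, read Lemma \ref{3.2_l2} as a Lipschitz bound on $\ln c$ along characteristics, not just as a $C^1$ bound. Integrating $\bar\partial^\pm c/c>\mathcal H$ along a characteristic arc $\widehat{IJ}$ gives $|\widehat{IJ}|>|\mathcal H|^{-1}\ln\bigl(c(I)/c(J)\bigr)$. Hence $c$ can reach $0$ only after infinite arc length.

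Second, arc length must be converted into Euclidean distance, which requires controlling how the characteristics turn near vacuum. Choose $\epsilon$ so small that $\tan^2\omega<m(\tau)$, i.e.\ $\mathcal Q>0$, whenever $c<\epsilon$. Then \eqref{ch1_7} gives $0<\bar\partial^+\alpha<-(\kappa(\tau_0)-1)\bar\partial^+c/c_0$. So the characteristics there are convex with total turning less than $\pi/4$, and therefore $\operatorname{dist}(I,J)>|\widehat{IJ}|/\sqrt2$.

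Third, the same sign information gives $\alpha<0$ along $C_+^{P_-}$ and $\beta>0$ along $C_-^{P_+}$, where $c(P_\pm)=\epsilon$. Hence these two characteristics never meet. Consequently every point of $l_\varepsilon$ with $\varepsilon>c_0-\epsilon$ lies on a characteristic issuing from $l_{c_0-\epsilon}$, so the first two steps apply to it.

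Without this distance estimate, your proof gives a classical solution on $\bigcup_\varepsilon\Omega_\varepsilon$ but not that this union is the full infinite sector.
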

\begin{figure}
    \centering
    \includegraphics[width=0.7\linewidth]{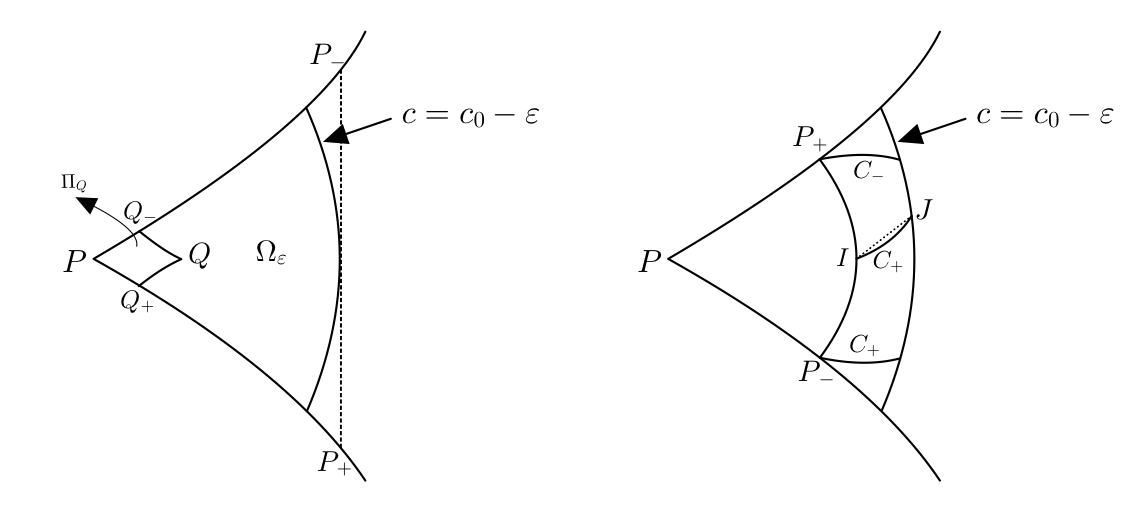}
    \caption {Illustration of the local solution and the corresponding level curve of $c$.}
    \label{levelcurve}
\end{figure}
\begin{proof}
Suppose that the Goursat problem \eqref{Euler}--\eqref{3.2_9} admits a classical solution in the domain $\Omega_\varepsilon$ for some $\varepsilon\in(0,c_0)$. By Lemma \ref{3.2_l2} together with \eqref{ch1_8}, the classical solution of the Goursat problem \eqref{Euler}--\eqref{3.2_9} satisfies
\begin{equation}\label{3.2_13}
    \begin{split}
       c_0-\varepsilon<c<c_0,\\
        u_0<\sqrt{u^2+v^2}<\sqrt{u_0^2+\frac{2K((\gamma+1)\tau_0-b)}{\gamma(\tau_0-b)^{\gamma+1}}-\frac{4a}{\tau_0}},\\
        |\nabla c|<-\frac{\mathcal H}{\cos\omega_0}\sqrt{u_0^2+\frac{2K((\gamma+1)\tau_0-b)}{\gamma(\tau_0-b)^{\gamma+1}}-\frac{4a}{\tau_0}},\\
        |\nabla u|<-\frac{\kappa(\tau_0)\mathcal H}{\cos\omega_0}\sqrt{u_0^2+\frac{2K((\gamma+1)\tau_0-b)}{\gamma(\tau_0-b)^{\gamma+1}}-\frac{4a}{\tau_0}},\\
        |\nabla v|<-\frac{\kappa(\tau_0)\mathcal H}{\cos\omega_0}\sqrt{u_0^2+\frac{2K((\gamma+1)\tau_0-b)}{\gamma(\tau_0-b)^{\gamma+1}}-\frac{4a}{\tau_0}},
    \end{split}
\end{equation}
throughout $\Omega_\varepsilon$ for some $\varepsilon\in(0,c_0)$.

Moreover, Lemma \ref{3.2_l2} implies that
\[
\bar{\partial}^{\pm}c<0
\qquad\text{in }\Omega_\varepsilon,
\]
which ensures that every level curve of $c$ is non-characteristic. Consequently, the local continuation theorem for Goursat problems (cf. \cite{MR1291392}) can be applied to extend the classical solution beyond the level curve $c=c_0-\varepsilon$. Hence, there exists $\varepsilon_1>\varepsilon$ such that the solution extends uniquely to the larger domain $\Omega_{\varepsilon_1}$.

Repeating the above continuation argument and invoking the method of continuity, we conclude that the Goursat problem \eqref{Euler}--\eqref{3.2_9} admits a unique global classical solution in
\[
\Omega=\bigcup_{\varepsilon\in(0,c_0)}\Omega_\varepsilon,
\]
which is a sector-shaped domain bounded by the characteristic curves $C_\pm^P$; see Figure \ref{levelcurve}. 

Next, we establish
\begin{equation}\label{3.2_14}
   \operatorname{dist}(P,l_\varepsilon)\to+\infty
    \qquad\text{as}\qquad
    \varepsilon\to c_0,
\end{equation}
where $l_\varepsilon$ denotes the level curve $c(x,y)=c_0-\varepsilon$.

For any $\epsilon>0$, let $\alpha_\epsilon$ be determined by $\hat c(\alpha_\epsilon)=\epsilon$. Then there exists a sufficiently small $\epsilon>0$ such that
\begin{equation}\label{3.2_15}
\begin{split}
&\alpha_\epsilon+(\kappa(\tau_0)-1)\frac{\epsilon}{c_0}<0,\\
&\tan^2\omega=\frac{c^2}{(u^2+v^2)\cos^2 \omega}<\frac{c^2}{c_0^2\cos^2 \omega_0}<m(\tau_0),\quad \text{for } c<\epsilon,\\
&(\kappa(\tau_0)-1)\frac{\epsilon}{c_0}<\frac{\pi}{4}.
\end{split}
\end{equation}
Let $P_\pm$ denote the points on the cross-characteristic curves $C_\pm^P$ satisfying
\[
c(P_\pm)=\epsilon.
\]
Furthermore, let $C_+^{P_-}$ ($C_-^{P_+}$, respectively) be the $C_+$ ($C_-$, respectively) characteristic curve issuing from $P_-$ ($P_+$, respectively). Then, in view of equation \eqref{ch1_7} together with the second inequality in \eqref{3.2_15}, we obtain
\begin{equation}\label{3.2_16}
    0<\bar\partial^+\alpha<-(\kappa(\tau_0)-1)\frac{\bar\partial^+c}{c_0},
    \qquad \text{along } C_+^{P_-}.
\end{equation}
Integrating \eqref{3.2_16} along the characteristic curve $C_+^{P_-}$ and using the first inequality in \eqref{3.2_15}, we arrive at
\begin{equation}\label{3.2_17}
    \alpha(P_-)=\alpha_\epsilon
    <\alpha<
    \alpha_\epsilon+(\kappa(\tau_0)-1)\frac{\epsilon}{c_0}
    <0,
    \qquad \text{on } C_+^{P_-}.
\end{equation}
By symmetry, the corresponding estimate $\beta>0$ holds along $C_-^{P_+}$. Consequently, the characteristic curves $C_+^{P_-}$ and $C_-^{P_+}$ remain disjoint.

For any $\varepsilon>c_0-\epsilon$, let $J$ be an arbitrary point on the level curve $l_\varepsilon$. Since the characteristic curves $C_+^{P_-}$ and $C_-^{P_+}$ remain disjoint, there exists a point $I\in l_{c_0-\epsilon}$ such that $J$ lies on either the $C_+^I$ or the $C_-^I$ characteristic issuing from $I$. Without loss of generality, we assume that $J\in C_+^I$, where $C_+^I$ denotes the $C_+$ characteristic curve passing through $I$. By Lemma \ref{3.2_l2}, we have
\begin{equation}\label{3.2_18}
    \widehat{|IJ|} >-\frac{1}{\mathcal{H}}\ln\left(\frac{\varepsilon}{c_0-\epsilon}\right),
\end{equation}
where $\widehat{|IJ|}$ denotes the arc length of the characteristic segment $\widehat{IJ}$. Moreover, arguing as in the derivation of \eqref{3.2_16}, the characteristic curve $\widehat{IJ}$ is convex and satisfies
\begin{equation}\label{3.2_19}
    0<\alpha(J)-\alpha(I)
    <
    (\kappa(\tau_0)-1)\frac{\epsilon}{c_0}.
\end{equation}
Combining \eqref{3.2_19} with the third inequality in \eqref{3.2_15}, we conclude that
\begin{equation}\label{3.2_20}
    \operatorname{dist}(I,J)>
    -\frac{1}{\sqrt{2}\mathcal{H}}
    \ln\left(\frac{\varepsilon}{c_0-\epsilon}\right).
\end{equation}
Consequently,
\[
\operatorname{dist}(P,l_\varepsilon)\rightarrow+\infty
\qquad\text{as}\qquad
\varepsilon\rightarrow c_0,
\]
which establishes \eqref{3.2_14} and completes the proof.
\end{proof}
Combining the above results we conclude that, for every fixed $u_0>c_0$, there exists a global piecewise smooth supersonic jet flow expanding from the nozzle into a vacuum. The proof of Theorem \ref{main1} in the supersonic regime is therefore completed.

\subsection{A priori estimates}\label{vacuum_subsection2}

In this subsection, we derive interior $C^{0,1}$ estimates for the supersonic jet flow solutions constructed in the previous subsection \ref{vacuum_subsection1}. These estimates play an essential role in the limiting procedure used to establish the existence of sonic-supersonic jet flow solutions. To overcome the singular behavior arising near the vacuum boundary, we employ the approach introduced in \cite{MR4275747}. We begin by defining the following distance functions:
\begin{equation}\label{3.3_1}
    r_D(x,y)=\operatorname{dist}\big((x,y),D\big),
    \qquad
    r_B(x,y)=\operatorname{dist}\big((x,y),B\big).
\end{equation}
For later use, we also define
\begin{equation}\label{3.3_9}
    \begin{cases}
        \Omega_+=\Omega \cap\{y\geq 0\},\\
        \Omega_-=\Omega \cap\{y\leq 0\},\\
        \Sigma_+(M_0)=\Sigma(M_0)\cap\{y\geq 0\},\\
        \Sigma_-(M_0)=\Sigma(M_0)\cap\{y\leq 0\}.
    \end{cases}
\end{equation}
Furthermore, since
\[
\sin\omega=\frac{c}{\sqrt{u^2+v^2}}<\frac{c}{c_0},\quad \text{for }\sqrt{u^2+v^2}>c_0,
\]
there exists a constant $c_1\in(0,c_0]$, independent of $u_0\ge c_0$ such that
\begin{equation}\label{3.3_10}
    \frac{1+\kappa(\tau)\cos^2 2\omega+ 2\tau\kappa'(\tau)\cos^2 \omega}{2\cos^2 \omega}>1\quad \text{for }0<c<c_1.
\end{equation}
\begin{lemma}\label{3.3_l1}
For $u_0>c_0$, the supersonic jet flow solution of the free boundary problem \eqref{Euler}--\eqref{Boundary_v} satisfies
\begin{equation}\label{3.3_2}
    \frac{\bar\partial^+c}{\cos\omega}>
    -\frac{1}{\mathcal{K}\,r_D},
    \qquad
    \frac{\bar\partial^-c}{\cos\omega}>
    -\frac{1}{\mathcal{K}\,r_B}
    \quad \text{in } \Sigma(M_0),
\end{equation}
where 
\[
\mathcal{K}
=
\frac{1}{2c_0\mu^2(\tau_0)}.
\]
\end{lemma}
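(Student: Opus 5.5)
We compare $W_+:=\bar\partial^+c/\cos\omega$ with the barrier $-1/(\mathcal K r_D)$, and $W_-:=\bar\partial^-c/\cos\omega$ with $-1/(\mathcal K r_B)$. By the symmetry $y\mapsto -y$ of the configuration, it suffices to prove the first inequality. The argument splits according to the three regions of $\Sigma(M_0)$: the uniform region, where $\nabla c=0$ and the bound is trivial; the simple wave regions $S_\pm$; and the interaction region $\Omega$.

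\textbf{Step 1 (simple wave regions).} In $S_-$ the $C_+$ characteristics are straight rays from $D$, and $c=\hat c(\alpha)$. By \eqref{Simple_15}–\eqref{Simple_16}, $d\hat c/d\alpha=\mu^2(\hat\tau)\sqrt{\mathcal F+\mathcal F_1}$. The derivative along the cross direction is
\[
\bar\partial^+c=\pm\frac{2\cos\omega\,\mu^2\sqrt{\mathcal F+\mathcal F_1}}{\sqrt{u^2+v^2}\,r_D},
\]
computed exactly as in \eqref{3.2_7} with $r_D$ in place of $\sqrt{x^2+(y+l)^2}$. Using $\sqrt{\mathcal F+\mathcal F_1}=\hat g\le\sqrt{u^2+v^2}$, we get
\[
\frac{\bar\partial^+c}{\cos\omega}\ge -\frac{2\mu^2(\tau)}{r_D}.
\]
It remains to check $2\mu^2(\tau)\le 2c_0\mu^2(\tau_0)$, or a normalized version of it. This should follow from the monotonicity of $\mu^2=1/(1+\kappa)$ in $\tau$, since $\kappa'>0$ and $\tau\ge\tau_0$ in the expansion; I expect the constant $\mathcal K$ to be calibrated precisely so that this holds. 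In $S_+$, the derivative $\bar\partial^+c$ is taken along the straight $C_+$ rays from $B$, so $\bar\partial^+c=0$ there and the bound is trivial. The second inequality is handled symmetrically.

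\textbf{Step 2 (propagation into $\Omega$ by a maximum principle).} On the boundary $C_+^P\subset S_-$, Step 1 gives the strict inequality. Suppose, for contradiction, that there is a first point $H\in\Omega$, ordered as in the proof of Lemma \ref{3.2_l2} through characteristic quadrilaterals $\Pi_Q$, where $W_+=-1/(\mathcal K r_D)$. Then
\[
\bar\partial^-\Big(W_++\frac{1}{\mathcal K r_D}\Big)(H)\le 0.
\]
To contradict this, rewrite the first equation of \eqref{ch2_6} as an equation for $W_+$:
\[
c\,\bar\partial^-W_+=W_+^2\Big[\frac{\cos\omega}{2\mu^2}\Big]+W_+\bar\partial^-c\,\Big(\frac{1+\kappa\cos^22\omega}{2\cos^2\omega}+\tau\kappa'\Big)+\frac{c\sin\omega}{\cos^2\omega}W_+\bar\partial^-\omega.
\]
The middle term is positive because $W_+<0$ and $\bar\partial^-c<0$ by Lemma \ref{3.2_l2}. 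The last term is controlled using \eqref{ch1_7}, which expresses $\bar\partial^-\omega$ through $\bar\partial^-c$. On the other side, $\bar\partial^-(1/r_D)=-\cos(\beta-\phi_D)/r_D^2$, where $\phi_D$ is the polar angle about $D$. In $\Omega$ the $C_-$ direction has a component away from $D$, so this quantity is bounded above by $1/r_D^2$.

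At $H$ the $W_+^2$ term equals $\cos\omega/(2\mu^2\mathcal K^2r_D^2)$, while the barrier contributes at most $c/(\mathcal K r_D^2)$. Since $1/(2\mu^2\mathcal K)=c_0\mu^2(\tau_0)/\mu^2\ge c$, the quadratic term should dominate, provided $\cos\omega$ is handled correctly. This is the role of the choice $\mathcal K=1/(2c_0\mu^2(\tau_0))$, and it yields $\bar\partial^-(W_++1/(\mathcal K r_D))>0$ at $H$, a contradiction.

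\textbf{Step 3 (main obstacle).} The hardest part is the sign and size bookkeeping at $H$. Two issues need care: controlling the $\bar\partial^-\omega$ term and the factor $\cos\omega$ uniformly, especially where $c$ is small near the vacuum boundary; and establishing the geometric inequality $\bar\partial^-r_D\ge -1$ together with its correct direction. Near vacuum, where $c<c_1$, I would use \eqref{3.3_10} so that the middle coefficient exceeds $1$ and absorbs the $\omega$-term. Elsewhere I would rely on the bounds \eqref{3.2_13}. If the pointwise comparison fails, the fallback is to perturb the barrier to $-(1+\delta)/(\mathcal K r_D)$ and let $\delta\to0$. Finally, since all estimates are uniform in $u_0>c_0$, the bounds hold throughout $\Sigma(M_0)$.
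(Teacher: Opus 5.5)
Your comparison argument along $C_-$ characteristics works and is a genuine, if mild, variant of the paper's proof. However, several computations are off, and these errors are what produce the ``obstacles'' in your Step 3.

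(i) In $S_-$ the straight rays from $D$ are $C_-$ characteristics. If they were $C_+$, then $\bar\partial^+c$ would vanish there, as it does in $S_+$. Also, in transcribing \eqref{3.2_7} you dropped the factor $c$. Since $\sqrt{\mathcal F+\mathcal F_1}=\hat g=\sqrt{u^2+v^2}\cos\omega$, the correct value in $S_-$ is $\bar\partial^+c/\cos\omega=-2\mu^2(\tau)\,c\cos\omega/r_D$. What must be checked is $\mu^2(\tau)c\cos\omega<\mu^2(\tau_0)c_0$, which holds because $c\le c_0$, $\tau\ge\tau_0$ with $\kappa'>0$, and $\cos\omega<1$. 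No ``normalization'' is involved.

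(ii) Rewriting \eqref{ch2_6} for $W_+=\bar\partial^+c/\cos\omega$ gives
\[
c\,\bar\partial^-W_+=\frac{W_+^2}{2\mu^2(\tau)\cos\omega}+W_+\,\bar\partial^-c\left(\frac{1+\kappa(\tau)\cos^22\omega}{2\cos^2\omega}+\tau\kappa'(\tau)\right)+c\tan\omega\,W_+\,\bar\partial^-\omega .
\]
So the coefficient of $W_+^2$ is $1/(2\mu^2\cos\omega)$, not $\cos\omega/(2\mu^2)$. With your coefficient, the pointwise comparison at $H$ would require $c_0\mu^2(\tau_0)\cos\omega\ge\mu^2(\tau)c$. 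This is false near $P$, where $c\approx c_0$ and $\tau\approx\tau_0$. With the correct coefficient, the factor $1/\cos\omega\ge1$ works in your favour.

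(iii) By \eqref{3.3_5}, $\bar\partial^-\omega$ has the sign of $\bar\partial^-c\le 0$. Hence the last term is nonnegative and can simply be dropped. Neither \eqref{3.3_10} nor the perturbed barrier is needed.

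(iv) $|\bar\partial^-r_D|\le1$ holds simply because $|\nabla r_D|=1$. No information about the direction of $C_-$ relative to $D$ is required.

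With these corrections, at a touching point
\[
\bar\partial^-W_+\ge \frac{W_+^2}{2\mu^2c\cos\omega}>\mathcal K W_+^2=\frac{1}{\mathcal K r_D^2}\ge\bar\partial^-\bigl(-1/(\mathcal K r_D)\bigr),
\]
which is the contradiction you want. This is a scalar comparison along each $C_-$ characteristic issuing from $C_+^P$. The signs of $\bar\partial^-c$ and $\bar\partial^-\omega$ are already known from Lemma~\ref{3.2_l2}, so the ordering through the quadrilaterals $\Pi_Q$ is superfluous.

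The paper uses the same Riccati structure but linearizes it by passing to the reciprocal. From \eqref{ch2_6} and \eqref{3.3_5} it obtains $\bar\partial^-(\cos\omega/\bar\partial^+c)<-1/(2\mu^2c\cos\omega)<-\mathcal K$ on all of $S_-\cup\Omega$. It then notes that $\cos\omega/\bar\partial^+c\to0$ at the corner $D$. Finally it integrates along the $C_-$ characteristic from $D$, a straight ray in $S_-$ continued into $\Omega$, to get $\bar\partial^+c/\cos\omega>-1/(\mathcal K|\widehat{D\mathcal A}|)\ge-1/(\mathcal K r_D)$. That argument needs no contradiction step, treats $S_-$ and $\Omega$ in one sweep, and even gives the slightly sharper arc-length bound.

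Your barrier $-1/(\mathcal K r_D)$ is just the reciprocal of $-\mathcal K r_D$, so the two arguments are equivalent in substance. What yours buys is avoiding the singular corner: the limit of the reciprocal at $D$ is replaced by the explicit simple-wave formula on $C_+^P$. The cost is a separate computation in $S_-$ and the first-touching-point argument.
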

\begin{proof}
We establish the estimate
\begin{equation}\label{3.3_3}
    \frac{\bar\partial^+c}{\cos\omega}>
    -\frac{1}{\mathcal{K}\,r_D}
    \quad \text{in } \Sigma(M_0).
\end{equation}
Since $\bar\partial^+c=0$ in $\Sigma(M_0)\setminus(S_-\cup\Omega)$, it is sufficient to verify \eqref{3.3_3} in the region $S_-\cup\Omega$.

Furthermore, by \eqref{ch1_7} and the fact that $\bar\partial^\pm c\leq0$, we obtain
\begin{equation}\label{3.3_5}
\begin{split}
2c\bar\partial^-\omega
&=(1+\kappa(\tau))\tan\omega\bigl(1-\mathcal{Q}\cos^2 \omega\bigr)\bar\partial^-c\\
&=\tan\omega(1+\kappa(\tau)\sin^2 \omega)\bar\partial^-c
<0.
\end{split}
\end{equation}
Substituting \eqref{3.3_5} into the first equation of \eqref{ch2_6}, we deduce
\begin{equation}\label{3.3_6}
\bar\partial^-
\left(
\frac{\cos\omega}{\bar\partial^+c}
\right)
<
-\frac{1}{2\mu^2(\tau)c\cos\omega}
<
-\mathcal{K},
\qquad
\text{in }S_-\cup\Omega.
\end{equation}
Let $\mathcal{A}$ be an arbitrary point in $S_-\cup\Omega$ and let $\widehat{D\mathcal{A}}$ denotes the $C_-$ characteristic curve joining $D$ to $\mathcal{A}$. By the construction of the simple wave, it follows that
\begin{equation}\label{3.3_7}
    \left(
    \frac{\cos\omega}{\bar\partial^+c}
    \right)(x,y)\longrightarrow 0,
    \qquad\text{as }(x,y)\rightarrow D.
\end{equation}
Integrating the differential inequality \eqref{3.3_6} along the characteristic arc $\widehat{D\mathcal{A}}$ and using \eqref{3.3_7}, we obtain
\begin{equation}\label{3.3_8}
    \left(
    \frac{\cos\omega}{\bar\partial^+c}
    \right)(\mathcal{A})
    >
    -\frac{1}{\mathcal{K}\,|\widehat{D\mathcal{A}}|}
    >
    -\frac{1}{\mathcal{K}\,\operatorname{dist}(D,\mathcal{A})},
\end{equation}
where $|\widehat{D\mathcal{A}}|$ denotes the arc length of the characteristic segment $\widehat{D\mathcal{A}}$. Since the point $\mathcal{A}$ is arbitrary, estimate \eqref{3.3_3} follows throughout $S_-\cup\Omega$. The second inequality in \eqref{3.3_2} can be established by an analogous argument. This completes the proof.
\end{proof}

\begin{lemma}\label{3.3_l2}
For $u_0>c_0$, the supersonic jet flow solution of the free boundary problem \eqref{Euler}--\eqref{Boundary_v} satisfies 
    \begin{align}\label{3.3_11}
        \begin{split}
            \frac{\bar\partial^+c}{c}&>-\max\left\{\frac{2}{\kappa(\tau_0)r_D},\frac{1}{\mathcal{K}lc_1}\right\}, \quad \text{in } \Sigma_+(M_0);\\
            \frac{\bar\partial^-c}{c}&>-\max\left\{\frac{2}{\kappa(\tau_0)r_B},\frac{1}{\mathcal{K}lc_1}\right\}, \quad \text{in } \Sigma_-(M_0);\\
            \frac{\bar\partial^\pm c}{c}&>-\frac{c_0}{c_1}\max\left\{\frac{2}{\kappa(\tau_0 )l},\frac{1}{\mathcal{K}lc_1}\right\},\quad \text{in } \Sigma_\mp(M_0).
        \end{split}
    \end{align}
\end{lemma}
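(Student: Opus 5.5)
We propose to prove Lemma \ref{3.3_l2} by splitting $\Sigma_+(M_0)$ according to the size of $c$, combining Lemma \ref{3.3_l1} in the region where $c$ is not small with a maximum-principle argument on $c^{-1}\bar\partial^+c$ in the region where $c<c_1$. We only treat the first inequality, since the second follows by symmetry $y\mapsto -y$. The third inequality comes from the first two together with the geometry of $\Sigma_\mp$.

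\textbf{Step 1 (region $c\ge c_1$).} From Lemma \ref{3.3_l1} we have $\bar\partial^+c>-\cos\omega/(\mathcal K r_D)\ge -1/(\mathcal K r_D)$. Dividing by $c\ge c_1$ gives $\bar\partial^+c/c>-1/(\mathcal K c_1 r_D)$. Every point of $\Sigma_+(M_0)$ in which $\bar\partial^+c\neq 0$ lies in $S_-\cup\Omega$. Points there with $y\ge0$ and small $r_D$ lie in the simple wave $S_-$ issuing from $D$, where the explicit formula \eqref{Simple_1} is available.

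In $S_-$ the $C_+$ characteristics are straight lines through $D$, $c=\hat c(\beta)$ (by symmetry), and $\bar\partial^+c$ equals $\hat c'$ divided by the distance $r_D$ (the analogue of \eqref{3.2_7}). Using $\hat g=(1+\kappa)\hat c_\alpha$ and $\hat g\le\sqrt{u^2+v^2}$, we get $|\bar\partial^+c|/c\lesssim 1/((1+\kappa) r_D\sin\omega)$-type bounds. We intend to sharpen these, via $\mu^2=1/(1+\kappa)$ and monotonicity of $\kappa$ (recall $\kappa'>0$ and $\tau\ge\tau_0$), to the bound $2/(\kappa(\tau_0)r_D)$.

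\textbf{Step 2 (region $c<c_1$, the main obstacle).} Here $c/c_1$ gives no help, so we use the decomposition \eqref{ch2_4} with $\nu=1$:
\[
c\bar\partial^-\!\left(\frac{\bar\partial^+c}{c}\right)=\frac{1}{c}\left[\frac{(\bar\partial^+c)^2}{2\mu^2\cos^2\omega}+\left(\frac{1+\kappa\cos^2 2\omega+2\tau\kappa'\cos^2\omega}{2\cos^2\omega}-1\right)\bar\partial^+c\,\bar\partial^-c\right].
\]
By \eqref{3.3_10} the coefficient of $\bar\partial^+c\,\bar\partial^-c$ is positive for $c<c_1$. Since $\bar\partial^\pm c\le0$, the right-hand side is then nonnegative. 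Hence $\bar\partial^+c/c$ is nondecreasing along $C_-$ characteristics in $\{c<c_1\}$.

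Tracing a $C_-$ characteristic backwards from a point with $c<c_1$ reaches either the level curve $c=c_1$ or the symmetry axis / simple wave region. At the level curve $c=c_1$, Step 1 applies, and there $r_D\ge l$ on the relevant portion (points with $c\le c_1<c_0$ in $\Omega\cap\{y\ge 0\}$ are at distance at least $l$ from $D$ up to the bound already accounted). This yields the bound $\max\{2/(\kappa(\tau_0)r_D),1/(\mathcal K l c_1)\}$. The delicate part is checking that the backward $C_-$ characteristic in $\Sigma_+$ indeed exits through a place where one of the two bounds holds, rather than through the axis $y=0$. For this we would invoke the symmetry $\bar\partial^+c(x,0)=\bar\partial^-c(x,-0)$ together with a comparison of both quotients simultaneously, as in the first-touching-point argument of Lemma \ref{3.2_l2}.

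\textbf{Step 3 (third inequality).} For $\bar\partial^\pm c/c$ in $\Sigma_\mp$, e.g.\ $\bar\partial^+c$ in $\Sigma_-$, we have $r_D\ge l$. We multiply the bound from Lemma \ref{3.3_l1} by $1/c$ when $c\ge c_1$, which gives the factor $c_0/c_1$ after the estimate $c\le c_0$ is used to normalize. When $c<c_1$ we propagate along $C_-$ exactly as in Step 2. Together these give $-\frac{c_0}{c_1}\max\{2/(\kappa(\tau_0)l),1/(\mathcal K l c_1)\}$.
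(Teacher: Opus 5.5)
\textbf{There is a genuine gap in your proof of the first inequality.} It lies in the interaction region $\Omega_+$, not in $S_-$.

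In Step 1 you apply Lemma \ref{3.3_l1} to $\bar\partial^+c$ itself. For $c\ge c_1$ this only yields $\bar\partial^+c/c>-1/(\mathcal K c_1 r_D)$. That implies the claimed bound only if $r_D\ge l$, or if $c_1\ge \kappa(\tau_0)c_0/(1+\kappa(\tau_0))$, which you cannot assume.

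You dismiss small $r_D$ by saying such points lie in $S_-$. There the explicit fan formula does give $\bar\partial^+c/c=-2\mu^2(\tau)\cos^2\omega/r_D$, so your sharpening works in $S_-$. In Step 2 you also assert $r_D\ge l$ on the level curve $c=c_1$ in $\Omega_+$. Both claims fail in the near-sonic regime, which is exactly where the lemma is used:
\begin{itemize}
\item Along $C_+^P$, the mirror of \eqref{3.2_5} gives $dr_D/d\phi=-r_D\cot 2\hat\omega$, with $\phi$ the ray angle at $D$.
\item So $r_D$ decreases from $|DP|=l/\sin\alpha_0$ as long as $\hat\omega>\pi/4$.
\item The Mach angle at $P$ is $\alpha_0=\arcsin(c_0/u_0)\to\pi/2$ as $u_0\downarrow c_0$, and the decrease factor tends to $0$.
\end{itemize}
Thus $C_+^P$, and with it $\Omega_+$ and its level curve $c=c_1$, passes arbitrarily close to $D$.

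Step 2 cannot compensate for this. Using only the sign of $\bar\partial^-(\bar\partial^+c/c)$, you merely transport the value at the entry point $Q_1$ of the backward $C_-$ characteristic. That value is of size $1/r_D(Q_1)$ and is not controlled by $r_D(Q)$.

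\textbf{The missing idea} is to keep the quadratic (Riccati) term and integrate $c/\bar\partial^+c$ along the entire backward $C_-$ characteristic down to $D$, where $c/\bar\partial^+c\to0$. Set $X=\bar\partial^+c/c$ and $Y=\bar\partial^-c/c$. Identity \eqref{3.3_12} reads $\bar\partial^-X=\frac{\kappa}{2\cos^2\omega}X^2+B\,XY+\frac{X(X-Y)}{2\cos^2\omega}$ with $B\ge0$. Hence $\bar\partial^-(1/X)\le-\kappa(\tau_0)/2$ wherever $c<c_1$ (by \eqref{3.3_10}, after regrouping the $XY$ terms) or wherever $X\le Y$.

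Lemma \ref{3.3_l1} enters through $\bar\partial^-c$, not $\bar\partial^+c$. In $\{y\ge0\}$ one has $r_B\ge l$, so $Y>-1/(\mathcal K l c_1)$ when $c\ge c_1$. Therefore $X<-1/(\mathcal K l c_1)$ forces $X<Y$. A first-crossing argument then shows that if $X(\mathcal A)$ lies below this threshold, it does so along the whole arc $\widehat{D\mathcal A}$. Integrating gives $X(\mathcal A)>-2/(\kappa(\tau_0)|\widehat{D\mathcal A}|)\ge-2/(\kappa(\tau_0)r_D)$. This is exactly the $\max$ structure in the statement, and it is the mechanism behind \eqref{3.3_13}.

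Your concern about the axis is unfounded. On $y=0$ symmetry gives $\beta=-\omega<0$, so backward $C_-$ characteristics from $\Omega_+$ stay in $\{y\ge0\}$ and end at $D$.

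Once the first inequality is established, your Step 3 is fine. For $c\ge c_1$ in $\Sigma_-$, using Lemma \ref{3.3_l1} directly (with $r_D\ge l$) even avoids the factor $c_0/c_1$. However, the $c<c_1$ case of Step 3 starts from the first inequality on the axis, so it inherits the gap above.
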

\begin{proof}
    Since the flow remains constant, namely $(u,v,c)=(u_0,0,c_0)$, in the region $\Sigma(M_0)\setminus(S_-\cup S_+\cup\Omega)$, it suffices to establish the desired estimates in $S_-\cup S_+\cup\Omega$.
    
From the characteristic decomposition \eqref{ch2_4} for $\nu=1$, we obtain
\begin{equation}\label{3.3_12}
    \begin{split}
        \bar\partial^-\left(\frac{c}{\bar\partial^+c}\right)&=-\frac{1}{\left(\frac{c}{\bar\partial^+c}\right)^2}\bar\partial^-\left(\frac{\bar\partial^+c}{c}\right)\\
       &=-\frac{1}{c^2\left(\frac{c}{\bar\partial^+c}\right)^2} \left(\frac{\bar\partial^+c\bar\partial^+c}{2\mu^2(\tau)\cos^2 \omega} + \frac{\bar\partial^+c\bar\partial^-c}{2\cos^2 \omega}\left(\frac{1}{\mu^2(\tau)}-\kappa(\tau)\sin^2 2\omega \right) + (\tau\kappa'(\tau)-1)\bar\partial^+c\bar\partial^-c\right)\\
         &=-\frac{1}{\left(\frac{c}{\bar\partial^+c}\right)^2}\left(\frac{\kappa(\tau)\bar\partial^+c\bar\partial^+c}{2c^2\cos^2\omega} + \frac{\bar\partial^+c\bar\partial^-c}{c^2}\left(\frac{\kappa(\tau)\cos^2 2\omega+2\sin^2 \omega}{2\cos^2 \omega}+\tau\kappa'(\tau)\right)+\frac{\bar\partial^+c}{2c\cos^2 \omega}\left(\frac{\bar\partial^+c}{c}-\frac{\bar\partial^-c}{c} \right) \right). \\
    \end{split}
\end{equation}
Our next objective is to prove that in $S_-\cup \Omega_+$
\begin{equation}\label{3.3_13}
    \bar\partial^-\left(\frac{c}{\bar\partial^+c}\right)<-\frac{2}{\kappa(\tau_0)}\quad \text{if } \frac{\bar\partial^+c}{c}<-\frac{1}{\mathcal{K}lc_1}.
\end{equation}
We first consider the region where $c\geq c_1$. By Lemma \ref{3.3_l1},
\begin{equation}\label{3.3_14}
    \bar\partial^-c>-\frac{1}{\mathcal{K}l}
    \qquad\text{in }S_-\cup\Omega_+.
\end{equation}
Therefore,
\[
\frac{\bar\partial^-c}{c}
>
-\frac{1}{\mathcal{K}lc_1},
\qquad
\text{for }y>0.
\]
Hence, if
\[
\frac{\bar\partial^+c}{c}
<
-\frac{1}{\mathcal{K}lc_1},
\]
then
\[
\frac{\bar\partial^+c}{c}
-
\frac{\bar\partial^-c}{c}
<0.
\]
Substituting this into \eqref{3.3_12} yields
\[
\bar\partial^-
\left(
\frac{c}{\bar\partial^+c}
\right)
<
-\frac{2}{\kappa(\tau_0)\cos^2 \omega}
<
-\frac{2}{\kappa(\tau_0)}.
\]
It remains to consider the region where $c<c_1$. In this case, combining \eqref{3.3_10} with \eqref{3.3_12}, we obtain
\[
\bar\partial^-
\left(
\frac{c}{\bar\partial^+c}
\right)
<
-\frac{1}{2\mu^2(\tau)\cos^2 \omega}
<
-\frac{2}{\kappa(\tau_0)},
\]
which establishes \eqref{3.3_13}.

Now let $\mathcal{A}$ be an arbitrary point in $S_-\cup\Omega_+$ and let $\widehat{D\mathcal{A}}$ denote the $C_-$ characteristic curve joining $D$ to $\mathcal{A}$. It follows that along $\widehat{D\mathcal{A}}$
\begin{equation}\label{3.3_15}
    \left(
    \frac{\bar\partial^+c}{c}
    \right)(x,y)
    \longrightarrow
    -\infty,
    \qquad
    \text{as }(x,y)\rightarrow D.
\end{equation}
Integrating the differential inequality \eqref{3.3_13} along the characteristic arc $\widehat{D\mathcal{A}}$ and using \eqref{3.3_15}, we arrive at
\begin{equation}\label{3.3_16}
    \frac{\bar\partial^+c}{c}>-\max\left\{\frac{2}{\kappa(\tau_0)r_B}, \frac{1}{\mathcal{K}lc_1}\right\}.
\end{equation}
Since the point $\mathcal{A}$ is arbitrary, estimate \eqref{3.3_16} holds throughout $\Sigma_+(M_0)$. By symmetry, we also obtain
\[
\frac{\bar\partial^-c}{c}>
-\max\left\{
\frac{2}{\kappa(\tau_0)r_D},
\frac{1}{\mathcal{K}lc_1}
\right\}
\qquad\text{in }\Sigma_-(M_0).
\]
It remains to estimate $\displaystyle{\frac{\bar\partial^+c}{c}}$ in $\Sigma_-(M_0)$. By the first characteristic decomposition in Corollary \ref{corollary1}, we have
\begin{equation}\label{3.3_17}
    \bar\partial^-\left(
    \frac{\bar\partial^+c}{c}
    \right)
    =
    \frac{\kappa(\tau)}{2\cos^2 \omega}
    \left(
    \frac{\bar\partial^+c}{c}
    \right)^2
    +
    \left(
    \frac{\kappa(\tau)+1-\kappa(\tau)\sin^2 2\omega}{2\cos^2 \omega}-1
    \right)
    \frac{\bar\partial^+c}{c}
    \cdot
    \frac{\bar\partial^-c}{c}.
\end{equation}
Consequently,
\begin{equation}\label{3.3_18}
\begin{cases}
\displaystyle
\bar\partial^-
\left(
\frac{\bar\partial^+c}{c}
\right)
>0,
& c<c_1,\\[2mm]
\displaystyle
\bar\partial^-
\left(
\frac{\bar\partial^+c}{c}
\right)
>
-\frac{\bar\partial^+c}{c}\,
\frac{\bar\partial^-c}{c},
& c\geq c_1.
\end{cases}
\end{equation}
Now let $Q$ be an arbitrary point in $\Omega_-$. The backward $C_-$ characteristic issuing from $Q$ intersects the $x$-axis at a point $Q_-$. In view of the estimate established above, we have
\begin{equation}\label{3.3_19}
\left(
\frac{\bar\partial^+c}{c}
\right)(Q_-)
\geq
-\max\left\{
\frac{2}{l\kappa(\tau_0)},
\frac{1}{\mathcal{K}lc_1}
\right\}.
\end{equation}
If $c(Q_-)<c_1$, then the monotonicity property $\bar\partial^-c<0$ implies that $c<c_1$ along the entire characteristic segment $\widehat{Q_-Q}$. Therefore, by \eqref{3.3_18}
\[
\left(\frac{\bar\partial^+c}{c}\right)(Q)
>
\left(\frac{\bar\partial^+c}{c}\right)(Q_-)
>
-\max\left\{
\frac{2}{l\kappa(\tau_0)},
\frac{1}{\mathcal{K}lc_1}
\right\}.
\]
Now suppose that $c(Q_-)\geq c_1$. If there exists a point $Q_n\in\widehat{Q_-Q}$ satisfying $c(Q_n)=c_1$ which, in view of \eqref{3.3_18}, gives
\begin{equation}
\begin{split}
\left(\frac{\bar\partial^+c}{c}\right)(Q)
&\geq
\left(\frac{\bar\partial^+c}{c}\right)(Q_n)\\
&>
\frac{c(Q_-)}{c_1}
\left(\frac{\bar\partial^+c}{c}\right)(Q_-)\\
&>
-\frac{c_0}{c_1}
\max\left\{
\frac{2}{l\kappa(\tau_0)},
\frac{1}{\mathcal{K}lc_1}
\right\}.
\end{split}
\end{equation}
If $c>c_1$ along the entire characteristic segment $\widehat{Q_-Q}$, then it follows from \eqref{3.3_18} that
\begin{equation}
\begin{split}
\left(\frac{\bar\partial^+c}{c}\right)(Q)
&>
\frac{c(Q_-)}{c(Q)}
\left(\frac{\bar\partial^+c}{c}\right)(Q_-)\\
&>
-\frac{c_0}{c_1}
\max\left\{
\frac{2}{l\kappa(\tau_0)},
\frac{1}{\mathcal{K}lc_1}
\right\}.
\end{split}
\end{equation}
Therefore,
\[
\left(\frac{\bar\partial^+c}{c}\right)(Q)>
-\frac{c_0}{c_1}
\max\left\{
\frac{2}{l\kappa(\tau_0)},
\frac{1}{\mathcal{K}lc_1}
\right\}.
\]
Since the point $Q$ is arbitrary in $\Omega_-$, we conclude that
\[
\frac{\bar\partial^+c}{c}>
-\frac{c_0}{c_1}
\max\left\{
\frac{2}{l\kappa(\tau_0)},
\frac{1}{\mathcal{K}lc_1}
\right\}
\qquad\text{in }\Sigma_-(M_0).
\]
By symmetry, the corresponding estimate for $\bar\partial^-c$ holds in $\Sigma_+(M_0)$. This completes the proof.
\end{proof}

\subsection{Sonic-supersonic jet solution for the vacuum case}

Next, we turn to the sonic case by considering the free boundary value problem \eqref{Euler}--\eqref{Boundary_v} with $u_0=c_0$.\\
To construct a solution, we approximate the sonic incoming flow by a sequence of supersonic flows. For each positive integer $n$, let
\begin{equation}\label{3.4_1}
    u_0^n=\left(1+\frac{1}{n}\right)c_0,
    \qquad n=1,2,3,\cdots.
\end{equation}
According to the results established in Subsections \ref{vacuum_subsection1} and \ref{vacuum_subsection2}, for each $n$ the free boundary value problem \eqref{Euler}--\eqref{Boundary_v} with $u_0=u_0^n$ possesses a global supersonic jet solution
\[
(u,v,c)=(u_n,v_n,c_n)(x,y),
\]
defined in the domain $\Sigma\!\left(1+\frac{1}{n}\right)$.
\begin{lemma}\label{3.4_l1}
Let $\Delta\Subset\Sigma(1)$ be any compact subset. Then there exists a positive integer $N_\Delta$ such that, for every $n>N_\Delta$, the corresponding supersonic jet flow solution
\[
(u,v,c)=(u_n,v_n,c_n)(x,y)
\]
satisfies
\begin{equation}\label{3.4_2}
    (u_0^n)^2<u_n^2+v_n^2<\hat q^{\,2},
    \quad
    [(u_n,v_n,c_n)]_{C^{0,1}(\Delta)}
    <2\kappa(\tau_0)\,\mathcal{M}(\varepsilon),
\end{equation}
where
\begin{equation*}
    \begin{split}
    \varepsilon&=\operatorname{dist}\bigl(\Delta,\{B,D\}\bigr),\\
        \hat q&=\sqrt{4c_0^2+2\left(\frac{K((\gamma+1)\tau_0-b)}{\gamma(\tau_0-b)^{\gamma+1}}-\frac{2a}{\tau_0}\right)},\\
        [(u_n,v_n,c_n)]_{C^{0,1}(\Delta)}
&=
\max\Bigl\{
[u_n]_{C^{0,1}(\Delta)},
[v_n]_{C^{0,1}(\Delta)},
[c_n]_{C^{0,1}(\Delta)}
\Bigr\},\\
\mathcal{M}(\varepsilon)
&=
-\sqrt{2}\,
\max\left\{
\frac{1}{\mathcal{K}\varepsilon},
\frac{2\hat q\,c_0}{\kappa(\tau_0)\varepsilon},
\frac{\hat q}{\mathcal{K}lc_1},
\frac{2\hat q\,c_0}{\kappa(\tau_0)lc_1},
\frac{2\hat q\,c_0}{\kappa(\tau_0)lc_1^2\mathcal{K}}
\right\}.
    \end{split}
\end{equation*}

\end{lemma}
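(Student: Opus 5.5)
The plan is to combine the a priori bounds of Lemmas \ref{3.3_l1} and \ref{3.3_l2} with the algebraic relations \eqref{ch1_8} and \eqref{derivatives}. These bounds are uniform in $u_0\ge c_0$, since $\mathcal{K}$, $c_1$, $\kappa(\tau_0)$ and $l$ do not depend on $M_0$.

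\textbf{Step 1: choice of $N_\Delta$.} Since $\Delta\Subset\Sigma(1)$ is compact, it has a positive distance from the limiting vacuum boundary $x=\varphi(y;1)$. The boundaries $\varphi(\cdot;1+\frac1n)$ converge to $\varphi(\cdot;1)$, because $\alpha_v$ depends continuously on $u_0$ through \eqref{Simple_18}. Hence $\Delta\subset\Sigma(1+\frac1n)$ for all $n>N_\Delta$, and each $(u_n,v_n,c_n)$ is defined and Lipschitz on $\Delta$.

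\textbf{Step 2: speed bounds.} The lower bound $(u_0^n)^2<u_n^2+v_n^2$ follows from $\bar\partial^\pm c\le 0$: the flow only expands, so $c_n\le c_0$, and Bernoulli's law \eqref{Bernaoulli1} then forces the speed to increase from its value $u_0^n$ on $\overline{BD}$. For the upper bound we use \eqref{Bernaoulli1} with $\tau\to\infty$, which gives $q^2< (u_0^n)^2+2\bigl(\frac{K((\gamma+1)\tau_0-b)}{\gamma(\tau_0-b)^{\gamma+1}}-\frac{2a}{\tau_0}\bigr)$, exactly as in \eqref{3.2_13}. Since $u_0^n\le 2c_0$, this is bounded by $\hat q^2$.

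\textbf{Step 3: Lipschitz seminorm.}
1. On $\Delta$ we have $r_B,r_D\ge\varepsilon$.
2. Lemma \ref{3.3_l1} gives $|\bar\partial^\pm c|\le \cos\omega/(\mathcal{K}\varepsilon)\le 1/(\mathcal{K}\varepsilon)$.
3. Lemma \ref{3.3_l2} gives $|\bar\partial^\pm c|\le c\cdot\max\{\cdots\}$. Using $c\le c_0$, or $c\le\hat q$ where convenient, this is dominated by the entries of $\mathcal{M}(\varepsilon)$, including the third-line bound carrying the factor $c_0/c_1$.
4. Taking the minimum of the two available bounds is not needed; it suffices to bound $|\bar\partial^\pm c|$ by $|\mathcal{M}(\varepsilon)|/\sqrt2$, up to the factors $\hat q$ coming from converting.
5. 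By \eqref{derivatives}, $|\nabla c|\le \frac{\sqrt{u^2+v^2}}{2c\cos\omega}(|\bar\partial^+c|+|\bar\partial^-c|)$. Here the factor $q/c=1/\sin\omega$ is dangerous, so instead I will estimate $\nabla c$ through the two characteristic directions directly. Since the angle $2\omega$ between them is bounded below away from degeneracy only in a weak sense, I will rather use the normalized quantities $\bar\partial^\pm c/\cos\omega$ and $\bar\partial^\pm c/c$. This is why $\mathcal{M}$ contains the $\hat q$ factors: $c/\sin\omega=q\le\hat q$ converts $\bar\partial^\pm c/c$ bounds into gradient bounds, while the factor $1/\cos\omega$ is absorbed by Lemma \ref{3.3_l1}.
6. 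Finally, \eqref{ch1_8} gives $|\bar\partial^\pm u|,|\bar\partial^\pm v|\le\kappa(\tau)|\bar\partial^\pm c|$. Since $\kappa$ is increasing (we assume $\kappa'>0$) and bounded on the relevant range, this accounts for the factor $\kappa(\tau_0)$, up to a constant 2, in \eqref{3.4_2}.

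\textbf{Main obstacle.} I expect the hardest point to be this conversion from characteristic derivatives to the full gradient with constants independent of $n$. Near the sonic state, $\omega\to\pi/2$ and the characteristic directions nearly coincide, so \eqref{derivatives} degenerates. I would handle it by choosing, for each of the two families, whichever of Lemma \ref{3.3_l1} (which controls $\bar\partial^\pm c/\cos\omega$) or Lemma \ref{3.3_l2} (which controls $\bar\partial^\pm c/c$) is sharper. I would then take the maximum, which is exactly the structure of $\mathcal{M}(\varepsilon)$, with the $\sqrt2$ coming from summing the two directional components.
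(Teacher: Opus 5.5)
Your route is the paper's: Bernoulli's law for the speed bounds, Lemmas~\ref{3.3_l1} and \ref{3.3_l2} for weighted bounds on $\bar\partial^\pm c$, and \eqref{derivatives}, \eqref{ch1_8} to pass to the full gradient. Your Step 1 also supplies the inclusion $\Delta\subset\Sigma(1+\frac1n)$, which the paper leaves implicit.

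The gap is in Step 3, exactly at the point you flag as the main obstacle, and your account there is internally inconsistent. Since $\sqrt{u^2+v^2}/c=1/\sin\omega$, \eqref{derivatives} gives $|\nabla c|\lesssim(|\bar\partial^+c|+|\bar\partial^-c|)/(2\sin\omega\cos\omega)$. This weight is singular both at the sonic end ($\omega\to\pi/2$) and at the vacuum end ($\omega\to0$). Lemma~\ref{3.3_l1} bounds $|\bar\partial^\pm c|/\cos\omega$. Lemma~\ref{3.3_l2}, together with $q=c/\sin\omega\le\hat q$, bounds $|\bar\partial^\pm c|/\sin\omega$. Each lemma removes only one of the two singular factors. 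Consequently:
\begin{itemize}
\item Item 4 (``it suffices to bound $|\bar\partial^\pm c|$'', ``taking the minimum is not needed'') is false.
\item Item 5 cannot absorb both factors at once from two separate bounds.
\item The remedy in your last paragraph is not carried out. The choice between the lemmas cannot be made per characteristic family; it must be made pointwise according to the size of $\omega$.
\item The $\sqrt2$ does not come from summing the two directional components. That sum, with the factor $\frac12$ in \eqref{derivatives}, costs nothing.
\end{itemize}

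The missing idea is the elementary inequality
\[
\frac{1}{\sin\omega\cos\omega}\le\sqrt2\,\max\Bigl\{\frac{1}{\sin\omega},\frac{1}{\cos\omega}\Bigr\},\qquad \omega\in(0,\tfrac{\pi}{2}),
\]
which holds because $\max\{\sin\omega,\cos\omega\}\ge1/\sqrt2$. It gives $\frac{|\bar\partial^\pm c|}{\sin\omega\cos\omega}\le\sqrt2\max\bigl\{\frac{|\bar\partial^\pm c|}{\cos\omega},\,q\frac{|\bar\partial^\pm c|}{c}\bigr\}$. This is the first display of the paper's proof, written there with $\min$ because $\bar\partial^\pm c<0$. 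Inserting Lemmas~\ref{3.3_l1} and \ref{3.3_l2} with $r_B,r_D\ge\varepsilon$ and $q\le\hat q$ yields $|\mathcal{M}(\varepsilon)|$; this is where both the $\sqrt2$ and the maximum in $\mathcal{M}$ come from.

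Your ``take the sharper bound'' idea is equivalent. If $A$ bounds $|\bar\partial^\pm c|/\cos\omega$ and $B$ bounds $|\bar\partial^\pm c|/\sin\omega$, then $\min\{A/\sin\omega,\,B/\cos\omega\}\le\sqrt2\max\{A,B\}$. But this inequality is precisely the step your proposal does not state.

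A smaller slip in item 6: $c\le c_0$ forces $\tau\ge\tau_0$, so $\kappa'>0$ gives $\kappa(\tau)\ge\kappa(\tau_0)$, not $\le$. Monotonicity therefore produces the constant $\sup_{\tau\ge\tau_0}\kappa(\tau)$, which is finite since $\kappa\to2/\gamma$, rather than $\kappa(\tau_0)$. The paper makes the same replacement without justification.
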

\begin{proof}
Since $\bar\partial^\pm c_n<0$, it follows that
\begin{equation}\label{3.4_3}
    \begin{split}
        \frac{\bar\partial^\pm c_n}{\sin\omega_n\cos\omega_n}
    &>
    \sqrt{2}\,
    \min\left\{
    \frac{\bar\partial^\pm c_n}{\cos\omega_n},
    \frac{\bar\partial^\pm c_n}{\sin\omega_n}
    \right\}\\
   & =
    \sqrt{2}\,
    \min\left\{
    \frac{\bar\partial^\pm c_n}{\cos\omega_n},
    \frac{\sqrt{u_n^2+v_n^2}\,\bar\partial^\pm c_n}{c_n}
    \right\}.
    \end{split}
\end{equation}
Moreover, the Bernoulli relation implies that
\[
u_0^n<\sqrt{u_n^2+v_n^2}<\hat q
\qquad
\text{in }
\Sigma\!\left(1+\frac{1}{n}\right).
\]
Therefore, applying Lemmas \ref{3.3_l1} and \ref{3.3_l2}, we obtain
\begin{equation}\label{3.4_4}
    \frac{\bar\partial^\pm c_n}{\sin\omega_n\cos\omega_n}
    >
   \mathcal{M}(\varepsilon)
    \qquad
    \text{in }\Delta.
\end{equation}
Finally, combining \eqref{3.4_4} with \eqref{derivatives} and \eqref{ch1_8} yields the estimate \eqref{3.4_2}. The proof is completed.
\end{proof}
By Lemma \ref{3.4_l1}, together with a standard diagonal argument, there is a bounded function
\[
(\breve u,\breve v,\breve c)\in C^{0,1}\bigl(\Sigma(1)\bigr)
\]
and a subsequence
\[
\{(u_{n_j},v_{n_j},c_{n_j})\}_{j=1}^{\infty}
\subset
\{(u_n,v_n,c_n)\}_{n=1}^{\infty},
\]
such that, for every compact subset $\Delta\Subset\Sigma(1)$,
\begin{equation}\label{3.4_5}
    (u_{n_j},v_{n_j},c_{n_j})
    \longrightarrow
    (\breve u,\breve v,\breve c)
    \qquad
    \text{in }C^{0,1}(\Delta)
    \quad\text{as }j\to\infty.
\end{equation}

\begin{lemma}\label{3.4_l2}
The limit function $(\breve u,\breve v,\breve c)$ obtained in \eqref{3.4_5} extends continuously to $\overline{\Sigma(1)}$. Furthermore,
\[
(\breve u,\breve v,\breve c)=(c_0,0,c_0)
\qquad\text{on }\overline{BD},
\]
and
\[
(\breve u,\breve v,\breve c)=(u_v,v_v,0)
\quad
\text{on }
\{(x,y)\mid x=\varphi(y),\,y<-l\},
\]
\[
(\breve u,\breve v,\breve c)=(u_v,-v_v,0)
\quad
\text{on }
\{(x,y)\mid x=\varphi(y),\,y>l\},
\]
where $\displaystyle{u_v=\hat u(\alpha_v)\;\text{and} \;v_v=\hat v(\alpha_v),}$ given by \eqref{Simple_1}.
\end{lemma}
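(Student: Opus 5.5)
The plan is to obtain the continuous extension from \emph{uniform-in-$n$ moduli of continuity} of the approximate solutions $(u_n,v_n,c_n)$ up to the boundary of $\Sigma(1)$. These moduli will come from monotonicity and the explicit simple wave structure rather than from the Lipschitz bounds of Lemma \ref{3.4_l1}, since those bounds degenerate near $B$ and $D$. We treat three pieces of the boundary separately: the open segment $\overline{BD}$ away from its endpoints, the vacuum boundaries $x=\varphi(y)$ for $|y|>l$, and the corner points $B$ and $D$.

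\textbf{Step 1: the segment $\overline{BD}$.} For $u_0^n>c_0$, the flow is the constant state $(u_0^n,0,c_0)$ in the triangle bounded by $\overline{BD}$ and the straight leading characteristics through $B$ and $D$, which meet at $P_n=(l\cot\alpha_0^n,0)$. Since $\alpha_0^n=\arcsin(c_0/u_0^n)\to\pi/2$, these triangles shrink to $\overline{BD}$. We therefore need a quantitative bound near $\overline{BD}$. Take a point $Q=(x,y)$ with $|y|\le l-\delta$ and small $x>0$. The backward $C_\pm$ characteristics through $Q$ have slopes close to the vertical, because $\omega$ is close to $\pi/2$ near the sonic state. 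Along each of them we integrate the bounds $|\bar\partial^\pm c_n|\lesssim c\,\mathcal M(\delta)$ of Lemmas \ref{3.3_l1}--\ref{3.3_l2}, and these bounds are uniform in $n$ at distance $\delta$ from $B$ and $D$. This gives $|c_n(Q)-c_0|\le C(\delta)\,x^{1/2}$, or at least $\le o(1)$ uniformly as $x\to0$. The constant in this estimate must be verified, because the characteristic length is only comparable to $x/\cos\omega$. The derivative formulas \eqref{ch1_8} together with Bernoulli's law then transfer the estimate to $u_n$ and $v_n$. Letting $n\to\infty$ yields $(\breve u,\breve v,\breve c)\to(c_0,0,c_0)$ on $\overline{BD}$.

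\textbf{Step 2: the vacuum boundary.} Near $x=\varphi(y)$ with $y<-l$, the solution for $y$ sufficiently negative lies in the simple wave region $S_+$ centered at $B$, and there it equals $(\hat u,\hat v,\hat c)(\alpha;u_0^n)$ with $\alpha=\arctan\frac{y+l}{x}$. The explicit formulas \eqref{Simple_1}--\eqref{Simple_2}, together with $\mathcal F_1$ and $\alpha_v$, depend continuously on $u_0$ down to $u_0=c_0$. The simple wave profiles therefore converge uniformly in $\alpha\in[\alpha_v,\alpha_0]$, and the limit extends continuously to the ray $\alpha=\alpha_v$ with value $(u_v,v_v,0)$. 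This matches $\varphi(y;1)$ in Theorem \ref{main1}. The interaction region $\Omega_n$ escapes to infinity: its vertex $P_n$ tends to $(0,0)$, but its boundary $C_-^{P}$ stays away from the vacuum ray by \eqref{3.2_6}. Hence every boundary point with $|y|>l$ has a neighborhood that is covered by the simple wave for all large $n$. We will need to verify this last claim uniformly in $n$, using the divergence $r(\alpha)\to\infty$ as $\alpha\to\alpha_v$ from \eqref{3.2_5} with constants uniform in $n$. The case $y>l$ follows by symmetry.

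\textbf{Step 3: the corners $B$ and $D$ (main obstacle).} At $B$ and $D$ the limit must be discontinuous in direction, so the assertion of continuity there means continuity on $\overline{\Sigma(1)}$ understood via the values along $\overline{BD}$ and along the vacuum rays. This is where Step 1 fails, because $\mathcal M(\varepsilon)$ blows up. The first approach we will try is to use that near $B$ the solution is exactly the centered simple wave for angles in $(\alpha_v,\alpha_0^n)$, with the constant state on the other side, so every value near $B$ is a function of $\alpha$ alone. We expect the genuine difficulty to be controlling the thin interaction region between the $C_-$ characteristic from $B$ and $\overline{BD}$ near $B$. We plan to handle it with the monotonicity $\bar\partial^\pm c_n<0$, which forces $c_n$ to lie between the simple wave value and $c_0$ there, together with Bernoulli's law, which bounds $\sqrt{u_n^2+v_n^2}$ and hence the velocity oscillation. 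Passing to the subsequence limit then gives the stated boundary values.
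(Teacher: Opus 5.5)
Your proposal has two genuine gaps. The first is in Step 2, which rests on the claim that every vacuum-boundary point with $|y|>l$ has a neighbourhood contained in the simple wave $S_+$ for all large $n$. That claim is false, because the uniformity you hoped to extract from \eqref{3.2_5} does not exist.

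\textbf{Why the simple-wave regions collapse.} Along $C_-^{P_n}$ we have $d\ln r/d\alpha=-\cot(2\hat\omega)=\frac{\hat c^2-\hat g^2}{2\hat g\hat c}$. Here $\hat g=\sqrt{\hat u^2+\hat v^2-\hat c^2}$ satisfies $\hat g(\alpha_0^n)=\sqrt{(u_0^n)^2-c_0^2}=:g_n\to0$, and $d\hat g/d\alpha=-\hat c$ by \eqref{Simple_12}. Hence, for $\alpha$ near $\alpha_0^n$, $\ln\bigl(r(\alpha_0^n)/r(\alpha)\bigr)\approx\tfrac12\ln\bigl(1+c_0(\alpha_0^n-\alpha)/g_n\bigr)$. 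So $C_-^{P_n}$ first dives to within $O(g_n^{1/2})$ of $B$. Afterwards $r$ grows only by the factor $\exp\int\cot(2\hat\omega)\,d\alpha$. This factor is bounded uniformly in $n$ on compact subsets of $(\alpha_v,\alpha_0)$ and becomes unbounded only as $\alpha\to\alpha_v$. Therefore the angle at which $C_-^{P_n}$ reaches any fixed radius $R$ tends to $\alpha_v$.

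Thus $S_\pm^n$ collapse onto $\overline{BD}$ and the vacuum rays. For large $n$, every fixed interior point near the vacuum boundary lies in the interaction region $\Omega_n$, where no explicit formula is available; this is exactly why the estimates of Subsection \ref{vacuum_subsection2} are needed. The same collapse removes the premise of your Step 3. You are right that $\breve c$ cannot be continuous at $B$ and $D$, where the traces $c_0$ and $0$ meet. The paper's argument also treats only boundary points at distance $\varepsilon>0$ from $B$ and $D$. So Step 3 can be dropped, and the statement read on $\overline{\Sigma(1)}\setminus\{B,D\}$.

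\textbf{Step 1 does not close either.} For large $n$ the point $Q$ is no longer in the constant-state triangle. Its backward $C_\pm$ characteristics do not return to $\overline{BD}$; they cross $C_\mp^{P_n}$ and then follow the straight characteristics of the fans into the corners $B$ and $D$, where your bounds blow up. An unweighted bound on $\bar\partial^\pm c_n$, integrated along these nearly vertical curves, is not uniform in $n$.

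\textbf{The missing idea is the estimate you set aside.} Lemma \ref{3.4_l1} degenerates only as $\operatorname{dist}(\cdot,\{B,D\})\to0$, not at the sonic segment or at the vacuum boundary. In \eqref{derivatives} the gradient carries the factor $\sqrt{u^2+v^2}/(c\cos\omega)=1/(\sin\omega\cos\omega)$. Lemma \ref{3.3_l1} bounds $\bar\partial^\pm c/\cos\omega$ and Lemma \ref{3.3_l2} bounds $\bar\partial^\pm c/c$; through \eqref{3.4_3} these absorb both singular factors. Together with \eqref{ch1_8}, this gives $|\nabla(u_n,v_n,c_n)|\le 2\kappa(\tau_0)|\mathcal{M}(\varepsilon)|$, uniformly in $n$, on the strips $\{0<x<\eta,\ |y|<l-\varepsilon\}$ and $\{\varphi(y;1)<x<\varphi(y;1)+\eta,\ y<-l-\varepsilon\}$, which reach the boundary.

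Now integrate horizontally from the boundary. There each approximate solution takes the exact values $(u_0^n,0,c_0)$ on $\overline{BD}$, and $(\hat u,\hat v,0)(\alpha_v^n)$ on the vacuum ray. The latter holds because the vacuum ray lies in $\overline{S_+^n}$: $C_-^{P_n}$ never meets it for fixed $n$, even though nearby interior points lie in $\Omega_n$. This gives deviations $O(\eta)$ uniformly in $n$. Then let $n\to\infty$, using $u_0^n\to c_0$ and the continuous dependence of the vacuum state and of $\varphi$ on $u_0$, which you correctly note. Finally let $\eta\to0$. This is the paper's argument, and it never needs to know in which region of the approximate flow a point lies.
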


\begin{proof}
Fix $\varepsilon\in(0,l)$ and let $\eta>0$ be sufficiently small. Define
\begin{equation}\label{3.4_6}
    \mathcal{X}(\varepsilon,\eta)
    =
    \left\{
    (x,y)\; : \;
    x\in (0,\eta),\;
    |y|<l-\varepsilon
    \right\}.
\end{equation}
By Lemma \ref{3.4_l1}, we have
\begin{equation}\label{3.4_7}
    [(u_{n_j},v_{n_j},c_{n_j})]_{C^{0,1}(\mathcal{X}(\varepsilon,\eta))}
    <
    2\kappa(\tau_0)\,\mathcal{M}(\varepsilon).
\end{equation}
Consequently,
\begin{equation}\label{3.4_8}
\begin{split}
    \|u_{n_j}-u_0^{\,n_j}\|_{0;\mathcal{X}(\varepsilon,\eta)}
    &<
    2\kappa(\tau_0)\,\mathcal{M}(\varepsilon)\eta,\\
    \|v_{n_j}\|_{0;\mathcal{X}(\varepsilon,\eta)}
    &<
    2\kappa(\tau_0)\,\mathcal{M}(\varepsilon)\eta,\\
    \|c_{n_j}-c_0\|_{0;\mathcal{X}(\varepsilon,\eta)}
    &<
    2\kappa(\tau_0)\,\mathcal{M}(\varepsilon)\eta.
\end{split}
\end{equation}
Letting $j\to\infty$ in \eqref{3.4_8} yields
\begin{equation}\label{3.4_9}
    \|(\breve u,\breve v,\breve c)-(c_0,0,c_0)\|_{0;\mathcal{X}(\varepsilon,\eta)}
    <
    6\kappa(\tau_0)\,\mathcal{M}(\varepsilon)\eta.
\end{equation}
Since $\eta>0$ is arbitrary, it follows that, for every $y_0\in(-l,l)$
\begin{equation}\label{3.4_10}
    (\breve u(x,y),\breve v(x,y),\breve c(x,y))
    \longrightarrow
    (c_0,0,c_0),
    \qquad
    \text{as }(x,y)\rightarrow(0,y_0).
\end{equation}
For any sufficiently small $\varepsilon>0$ and $\eta>0$, define
\begin{equation}\label{3.4_11}
    \mathcal{Y}(\varepsilon,\eta)
    =
    \left\{
    (x,y)\;\middle|\;
    \varphi(y;l)<x<\varphi(y;l)+\eta,\;
    y<-l-\varepsilon
    \right\}.
\end{equation}
Let $(x_0,y_0)\in\mathcal{Y}(\varepsilon,\eta)$. Then, for all sufficiently large $j$, we have
\[
(x_0,y_0)\in\Sigma\!\left(1+\frac{1}{n_j}\right).
\]
Applying Lemma \ref{3.4_l1}, we obtain
\begin{equation}\label{3.4_12}
[(u_{n_j},v_{n_j},c_{n_j})]_{C^{0,1}\!\left(
\mathcal{Y}(\varepsilon,\eta)\cap
\Sigma\!\left(1+\frac{1}{n_j}\right)
\right)}
<
2\kappa(\tau_0)\,\mathcal{M}(\varepsilon).
\end{equation}
Moreover, since
\[
\varphi(y;l)<\varphi\!\left(y;1+\frac{1}{n_j}\right),
\qquad y<-l,
\]
it follows that
\begin{equation}\label{3.4_13}
\begin{split}
\left|u_{n_j}(x_0,y_0)-u_{v,n_j}\right|
&<
2\kappa(\tau_0)\,\mathcal{M}(\varepsilon)\eta,\\
\left|v_{n_j}(x_0,y_0)-v_{v,n_j}\right|
&<
2\kappa(\tau_0)\,\mathcal{M}(\varepsilon)\eta,\\
|c_{n_j}(x_0,y_0)|
&<
2\kappa(\tau_0)\,\mathcal{M}(\varepsilon)\eta,
\end{split}
\end{equation}
where $(u_{v,n_j},v_{v,n_j})$ denotes the vacuum state corresponding to the simple wave solution associated with $u_0=u_0^{n_j}$. By letting $j\to\infty$, we arrive at
\begin{equation*}
    |(\breve u,\breve v,\breve c)|(x_0,y_0)-(u_v, v_v, 0)|<6\kappa(\tau_0)\,\mathcal{M}(\varepsilon)\eta.
\end{equation*}
Consequently,
\begin{equation}\label{3.4_14}
\|
(\breve u,\breve v,\breve c)
-
(u_v,v_v,0)
\|_{0;\mathcal{Y}(\varepsilon,\eta)}
<
6\kappa(\tau_0)\,\mathcal{M}(\varepsilon)\eta.
\end{equation}
\begin{figure}
    \centering
    \includegraphics[width=0.5\linewidth]{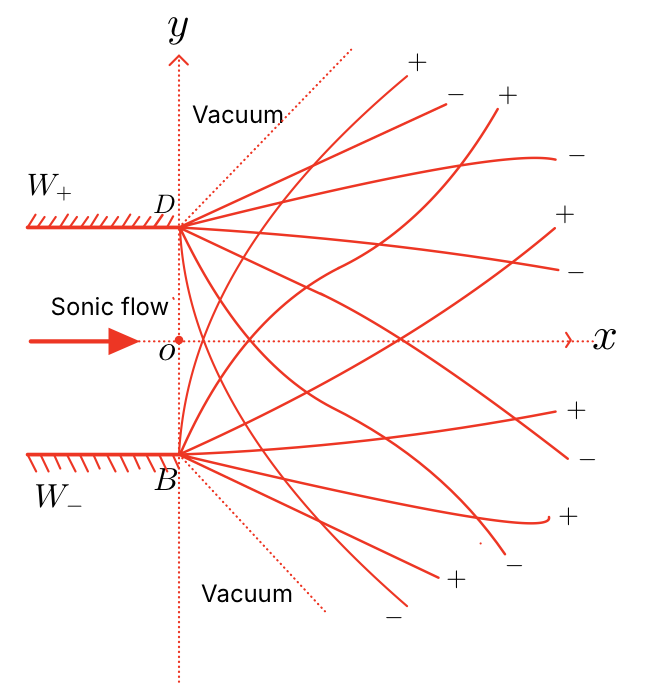}
    \caption {Configuration of the sonic--supersonic jet expanding into a vacuum: The wave pattern together with the symmetric vacuum boundaries emanating from the nozzle exit.}
    \label{Vacuum_flow}
\end{figure}
In view of the arbitrariness of $\varepsilon$ and $\eta$, $(\breve u,\breve v,\breve c)$ extends continuously to the vacuum boundary
\[
\{(x,y): x=\varphi(y;1),\;y<-l\},
\]
and satisfies
\[
(\breve u,\breve v,\breve c)(\varphi(y;1),y)
=
(u_v,v_v,0),
\qquad
y<-l.
\]
Using symmetry, $(\breve u,\breve v,\breve c)$ also extends continuously to
\[
\{(x,y)\mid x=\varphi(y;1),\;y>l\},
\]
with boundary value
\[
(\breve u,\breve v,\breve c)(\varphi(y;1),y)
=
(u_v,-v_v,0),
\qquad
y>l.
\]
This concludes the proof.
\end{proof}
Combining the results established above, we conclude the existence of a Lipschitz-continuous sonic-supersonic jet solution for vacuum from the straight 2-D nozzle. The corresponding wave structure is depicted in Figure \ref{Vacuum_flow}. The proof of Theorem \ref{main1} for $u_0=c_0$ is therefore completed.

\section{Sonic-supersonic jet flows from the straight 2-D nozzle into the atmosphere}

This section investigates
\eqref{Euler}--\eqref{Boundary_p}. Introduce
\begin{equation}\label{4_1}
    \alpha_a
    =
    \{\alpha:\,
    \hat c(\alpha)=c_a\},
\end{equation}
where $c_a$ denotes the sound speed corresponding to the prescribed
atmospheric pressure $p_a$. Furthermore, define
\begin{equation}\label{4_2}
    \theta_a
    =
    \alpha_a-
    \arcsin\!\left(
    \frac{c_a}
    {\sqrt{\hat u^2(\alpha_a)+\hat v^2(\alpha_a)}}
    \right).
\end{equation}
Throughout the present section, we suppose that
\begin{equation}\label{4_3}
    -\pi<\theta_a<0.
\end{equation}

\subsection{Local supersonic jet flow into an atmosphere}

Throughout this subsection, we assume that \eqref{4_3} holds. The uniform supersonic flow issuing from the two corners of the nozzle expands through two symmetric centered simple waves, denoted by $\mathcal{S}_+$ and $\mathcal{S}_-$, until the atmospheric pressure $p=p_a$ is attained. The resulting jet is separated from the surrounding stationary atmosphere by two free boundaries, each of which is a contact discontinuity.

According to the centered simple wave solution established in Theorem \ref{simplewaveth}, the upper simple wave $\mathcal{S}_+$ is described by
\begin{equation}\label{4.1_1}
    (u,v,c)
    =
    (u_+(x,y),v_+(x,y),c_+(x,y)),
    \qquad
    \alpha_a<
    \arctan\!\left(\frac{y+l}{x}\right)
    <
    \alpha_0,
\end{equation}
whereas the lower simple wave $\mathcal{S}_-$ is given by
\begin{equation}\label{4.1_2}
    (u,v,c)
    =
    (u_-(x,y),v_-(x,y),c_-(x,y)),
    \qquad
    \alpha_a<
    \arctan\!\left(\frac{l-y}{x}\right)
    <
    \alpha_0.
\end{equation}
The simple wave $\mathcal{S}_+$ is enclosed by the two straight $C_+$ characteristic lines
\[
\overline{BP}: \; y=x\tan\alpha_0-l,
\qquad
\overline{BE}: \; y=x\tan\alpha_a-l.
\]
The region beyond the characteristic line $\overline{BE}$ is occupied by a constant state, which is separated from the surrounding atmosphere by the jet boundary
\[
J_-:\; \frac{y+l}{x}=\tan\theta_a.
\]
Similarly, $\mathcal{S}_-$ is enclosed by the two straight $C_-$ characteristic lines
\[
\overline{DP}: \; y=l-x\tan\alpha_0,
\qquad
\overline{DG}: \; y=l-x\tan\alpha_a.
\]
The constant-state region behind the characteristic line $\overline{DG}$ is bounded by the jet boundary
\[
J_+:\;\frac{l-y}{x}=\tan\theta_a,
\]
which separates the jet from the stationary atmosphere.
The two centered simple waves begin to interact at the point
\[
P=(l\cot\alpha_0,\,0).
\]
From the point $P$ we construct a $C_+$ ($C_-$, respectively) cross-characteristic curve, denoted by $\widehat{PG}$ ($\widehat{PE}$, respectively), lying in the simple wave $\mathcal{S}_-$ ($\mathcal{S}_+$, respectively). The flow in the interaction region is then determined by solving the Goursat problem for \eqref{Euler} with the boundary condition
\begin{equation}\label{4.1_3}
\left.(u,v,c)\right|_{\widehat{PE}}
=(u_+,v_+,c_+),\qquad
\left.(u,v,c)\right|_{\widehat{PG}}
=(u_-,v_-,c_-).
\end{equation}
By the results established in Section \ref{section_3}, \eqref{Euler}-\eqref{4.1_3} possesses a global classical solution in the domain enclosed by the characteristic curves $\widehat{PG}$, $\widehat{PE}$, positive characteristic $C_+^E$ emanating from $E$ and the negative characteristic $C_-^G$ emanating from $G$. Furthermore, the solution obeys the estimates obtained in Lemma \ref{3.2_l2}.

Since the flow adjacent to a constant-state region is itself a simple wave, two non-planar simple waves, denoted by $I_+$ and $I_-$, are generated from the characteristic curves $C_-^G$ and $C_+^E$, respectively. The characteristic curves of $I_+$ ($I_-$, respectively) are straight $C_+$ ($C_-$, respectively) characteristics.

Lemma~\ref{3.2_l2} yields
\[
\bar\partial^\pm c|_{C_+^E}<0.
\]
Exploiting the above inequality together with \eqref{ch1_6} and the characteristic decomposition \eqref{ch2_2}, it follows that the simple wave $I_-$ obeys
\[
\bar\partial^+\beta>0, \quad
\bar\partial^+c<0.
\]
Consequently, the straight $C_-$ characteristic originating from $C_+^E$ remains non-intersecting.

Let $F$ ($H$, respectively) denote the intersection point of the straight $C_-$ ($C_+$, respectively) characteristic emanating from $E$ ($G$, respectively) with the jet boundary $J_-$ ($J_+$, respectively). The simple waves $I_+$ and $I_-$ are then reflected from the jet boundary at the points $F$ and $H$, respectively.

Finally, we construct a $C_+$ ($C_-$, respectively) cross characteristic, denoted by $C_+^F$ ($C_-^H$, respectively), inside the simple wave $I_+$ ($I_-$, respectively). Since $\bar\partial^+c<0$ on $C_+^E$, another application of \eqref{ch2_2} yields
\[
\bar\partial^+c|_{C_+^F}<0.
\]
To investigate the reflected simple waves $I_+$ and $I_-$ from the jet boundaries $J_-$ and $J_+$, respectively, we analyze \eqref{Euler} subject to the boundary conditions
\begin{equation}\label{4.1_4}
    \begin{cases}
        (u,v,c)=(u_{C_+^F},v_{C_+^F},c_{C_+^F})(x,y),
        & (x,y)\in C_+^F,\\[1mm]
        v(x_-(s),y_-(s))x_-'(s)
        =
        u(x_-(s),y_-(s))y_-'(s),\qquad
        x_-'^2+y_-'^2=1,
        & s>0,\\[1mm]
        p(x_-,y_-)(s)=p_a,
        & s>0,\\[1mm]
        (x_-(0),y_-(0))=(x_F,y_F),
    \end{cases}
\end{equation}
where $(u_{C_+^F},v_{C_+^F},c_{C_+^F})$ denotes the flow state prescribed along the characteristic curve $C_+^F$.

The free boundary is the unknown jet boundary
\[
J_-:\;(x,y)=(x_-,y_-)(s),\qquad s>0.
\]
For later convenience, we parameterize the characteristic curve $C_+^F$ by
\[
x=\tilde{x}_F(s),\qquad
y=\tilde{y}_F(s),
\]
where $s\ge0$ denotes the arc-length parameter. We now establish a local existence result for the free boundary problem \eqref{Euler}--\eqref{4.1_4}.
\begin{lemma}\label{4.1_l1}
There exists a sufficiently small constant $\epsilon_0 >0$ such that the free boundary value problem \eqref{Euler}--\eqref{4.1_4} admits a unique classical solution in the domain $\Phi_{\epsilon_0 }$ bounded by the characteristic curve $C_+^F$, the jet boundary $J_-$ and the $C_-$ characteristic originating from the point $\bigl(\tilde{x}_F(\epsilon_0 ),\tilde{y}_F(\epsilon_0 )\bigr)$. Here, the constant $\epsilon_0 $ depends only on the $C^1$-norm of the prescribed boundary condition on $C_+^F$. Furthermore, the solution obeys
\begin{equation}\label{4.1_5}
    \bar\partial^-c>0, \quad \bar\partial_0\theta\big|_{J_-}>0,
    \quad
    \bar\partial^+c<0.
\end{equation}
\end{lemma}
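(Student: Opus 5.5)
The plan has two parts: local existence of the free boundary problem by standard theory, and then propagation of the sign conditions \eqref{4.1_5} by a maximum-principle argument based on the characteristic decompositions, as in Lemma \ref{3.2_l2}.

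\emph{Local existence.} On the jet boundary the condition $p=p_a$ fixes $\tau=\tau_a$ and hence $c=c_a$. Bernoulli's law \eqref{Bernaoulli} then fixes the speed $q_a=\sqrt{u^2+v^2}$, and so the Mach angle $\omega_a$ is also fixed. The slip condition says $J_-$ is a streamline, so $J_-$ is determined by $\theta$ through $x_-'=\cos\theta$, $y_-'=\sin\theta$. I would pose the problem in the characteristic variables $(\alpha,\beta,c)$ using \eqref{ch1_6}--\eqref{ch1_7}. The boundary condition on $J_-$ becomes $\alpha-\beta=2\omega_a$, a relation between the Riemann invariants. It lets us solve for the outgoing $C_-$ invariant in terms of the incoming $C_+$ one, and $J_-$ is noncharacteristic because $\omega_a\in(0,\pi/2)$. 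This is a typical boundary value problem for a quasilinear hyperbolic system with one characteristic and one free boundary, as in Li--Yu \cite{liboundaryvalue}. The boundary is recovered by integrating the streamline ODE from $F$. The classical local theory gives a unique $C^1$ solution in a small curvilinear triangle $\Phi_{\epsilon_0}$, with $\epsilon_0$ controlled by the $C^1$ norm of the data on $C_+^F$.

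\emph{Sign of $\bar\partial^+c$.} On $C_+^F$ we already know $\bar\partial^+c<0$. I would transport this along $C_-$ characteristics into $\Phi_{\epsilon_0}$ using the first equation of \eqref{ch2_2}. That equation is a Riccati-type ODE for $\bar\partial^+c$ along $C_-$ whose right-hand side is a multiple of $\bar\partial^+c$. Writing it as $c\bar\partial^-(\bar\partial^+c)=\bar\partial^+c\,[\cdots]$, integration gives $\bar\partial^+c=\bar\partial^+c|_{C_+^F}\exp(\int\cdots)$, so the sign is preserved until a $C_-$ characteristic reaches $J_-$. Every point of $\Phi_{\epsilon_0}$ is reached by a backward $C_-$ characteristic from $C_+^F$, so $\bar\partial^+c<0$ throughout.

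\emph{Sign of $\bar\partial^-c$ and boundary behaviour.} Along $J_-$ we have $c\equiv c_a$, so $\bar\partial^0c=0$. By \eqref{conditions1} this gives $\bar\partial^-c=-\bar\partial^+c>0$ on $J_-$. Using the second equation of \eqref{ch2_2} along $C_+$ characteristics issuing from $J_-$, the same exponential-transport argument gives $\bar\partial^-c>0$ in $\Phi_{\epsilon_0}$. At $F$ itself we need compatibility with the simple wave data. I would shrink $\epsilon_0$ if necessary so that the signs are strict by continuity.

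\emph{Monotonicity of $\theta$ on $J_-$.} On $J_-$, $\omega=\omega_a$ is constant, so $\bar\partial^0\theta=\bar\partial^0\alpha=\bar\partial^0\beta$. From \eqref{ch1_7}, $\bar\partial^+\alpha$ is a positive multiple of $\bar\partial^+c$ times $(1-\kappa\cos2\omega)$. Likewise $\bar\partial^-\beta$ is a multiple of $\bar\partial^-c$ times $(\kappa\cos2\omega-1)$. Combining these with $\bar\partial^-c=-\bar\partial^+c>0$, I expect $2\cos\omega\,\bar\partial^0\theta$ to reduce to a positive multiple of $\bar\partial^-c$, for example via $\bar\partial^-\beta=-c\mu^2\cot\omega\,\bar\partial^-c$ inverted from \eqref{ch1_7} and the symmetric relation for $\alpha$. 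This yields $\bar\partial^0\theta>0$.

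The main obstacle is the corner point $F$. There the data on $C_+^F$ and the boundary condition must be compatible to $C^1$ order, and the sign of $\bar\partial^-c$ at $F$ has to be checked directly from the simple-wave data rather than by propagation.
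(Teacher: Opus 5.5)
Your proposal is correct and follows essentially the same route as the paper. The common steps are: local solvability from standard free-boundary theory (the paper simply invokes \cite{chen2010}); $\bar\partial^+c<0$ carried from $C_+^F$ along $C_-$ by the first equation of \eqref{ch2_2}; $\bar\partial^-c=-\bar\partial^+c>0$ on $J_-$ from $c\equiv c_a$, then carried along $C_+$ by the second equation; and $\bar\partial^0\theta$ rewritten in terms of $\bar\partial^\pm c$.

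Some minor remarks:
\begin{itemize}
\item The inversion you quote in the last step is misstated. It should be $\bar\partial^-\alpha=\frac{\tan\omega}{c\,\mu^2(\tau)}\bar\partial^-c$, since the second relation of \eqref{ch1_7} should involve $\bar\partial^-\alpha$.
\item Computing $\bar\partial^\pm\theta$ directly gives the paper's \eqref{4.1_9}, $\bar\partial^0\theta=\frac{\kappa(\tau)}{2\sqrt{u^2+v^2}}(\bar\partial^--\bar\partial^+)c$, without needing $\omega$ to be constant on $J_-$.
\item The corner $F$ is not a real obstacle. The value $\bar\partial^-c(F)=-\bar\partial^+c(F)>0$ comes from the boundary relation itself, not from the simple-wave data. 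On the simple-wave side $\bar\partial^-c=0$, so $\bar\partial^-c$ just jumps across $C_+^F$.
\end{itemize}
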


\begin{proof}
The local solvability of the free boundary problem \eqref{Euler}--\eqref{4.1_4} follows directly from the local existence theorem established in \cite{chen2010}. It therefore remains only to verify the estimates in \eqref{4.1_5}.

From the boundary condition on $C_+^F$, we have
\[
\bar\partial^+c|_{C_+^F}<0.
\]
An application of the first characteristic decomposition in Corollary \ref{corollary1} shows that
\begin{equation}\label{4.1_6}
    \bar\partial^+c<0
    \qquad\text{in }\Phi_{\epsilon_0 }.
\end{equation}
The boundary condition in \eqref{4.1_4} further implies that
\begin{equation}\label{4.1_7}
    \bar\partial_0c=\frac{(\bar\partial_++\bar\partial_-)c}{2}=0
    \qquad\text{on }J_-.
\end{equation}
Consequently,
\[
\bar\partial^-c|_{J_-}>0,
\]
and the second characteristic decomposition in Corollary \ref{corollary1} yields
\begin{equation}\label{4.1_8}
    \bar\partial^-c>0
    \qquad\text{in }\Phi_{\epsilon_0 }.
\end{equation}
Using \eqref{ch1_6} together with \eqref{ch1_7}, a straightforward calculation gives
\begin{equation}\label{4.1_9}
   \begin{split}
    \bar\partial_0\theta&=\left(\frac{\bar\partial^++\bar\partial^-}{4\cos\omega}\right)(\alpha+\beta)\\&=\left(\frac{\kappa(\tau)}{2\sqrt{u^2+v^2}}\right)(\bar\partial^--\bar\partial^+)c\\
       \implies  \bar\partial_0\theta&>0,
    \qquad \text{on } J_-.
   \end{split}
\end{equation}
The proof is therefore completed.
\end{proof}


\subsection{Local sonic-supersonic jet flow solution in an atmosphere}
We next turn our attention to the construction of a local sonic--supersonic jet solution in the presence of an atmosphere. To this end, we introduce the following quantities:
\begin{equation}\label{4.2_1}
\begin{gathered}
\alpha_a^{\,n_j}
=\{\alpha:\,\hat c_{\,n_j}(\alpha)=c_a\},\\[1mm]
\alpha_a^\infty
=\{\alpha:\,\hat c(\alpha)=c_a\},\\[1mm]
\omega_a^\infty
=
\arcsin\!\left(
\frac{c_a}
{\sqrt{\hat u^{\,2}(\alpha_a^\infty)+\hat v^{\,2}(\alpha_a^\infty)}}
\right),\\[1mm]
\theta_a^\infty
=
\alpha_a^\infty-\omega_a^\infty,
\qquad
\beta_a^\infty
=
\alpha_a^\infty-2\omega_a^\infty.
\end{gathered}
\end{equation}
For each $n_j$, let $C_+^{\,n_j}$ denote the $C_+$ characteristic curve issuing from the point $B$ with initial slope $\tan\alpha_a^{\,n_j}$ in the corresponding supersonic jet solution for the vacuum case. By the results of Section~\ref{vacuum_subsection2}, for $u_0=u_0^{nk}$ the flow is described by
\[
(u,v,c)=(u_{n_j},v_{n_j},c_{n_j})(x,y).
\]
Parameterizing the characteristic curve $C_+^{\,n_j}$ by its arc length,
\[
x=x_{n_j}(s),\qquad
y=y_{n_j}(s),\qquad s>0,
\]
we have
\begin{equation}\label{4.2_2}
\begin{cases}
\sqrt{(x_{n_j}'(s))^2+(y_{n_j}'(s))^2}=1,\\[2mm]
\displaystyle
\frac{y_{n_j}'(s)}{x_{n_j}'(s)}
=
\Lambda_+\!\left(
(u_{n_j},v_{n_j},c_{n_j})
(x_{n_j}(s),y_{n_j}(s))
\right),
\end{cases}
\qquad s>0.
\end{equation}
We then consider the free boundary value problem for \eqref{Euler} subject to 
\begin{equation}\label{4.2_3}
    \begin{cases}
        (u,v,c)=(u_{n_j},v_{n_j},c_{n_j})(x,y), & (x,y)\in C_+^{\,n_j};\\
        u(x,y)=v(x,y)\psi_{n_j}'(y), p(x,y)=p_a, & (x,y)\in \{(x,y)| x=\psi_{n_j}(y),\qquad y<-l\};\\
         \psi_{n_j}(-l)=0.
    \end{cases}
\end{equation}
The unknown jet-free boundary is given by
\begin{equation}
    x=\psi_{n_j}(y),\qquad y<-l,
\end{equation}
By Lemma \ref{3.3_l1}, the solution satisfies
\begin{equation}\label{4.2_4}
    -\frac{1}{\mathcal{K}l}
    <
    \bar\partial^+c_{n_j}
    \leq
    0,
    \qquad
    \text{on }
    C_+^{\,n_j}\cap\{y<0\}.
\end{equation}
Throughout this subsection, we assume that
\begin{equation}\label{4.2_5}
    \beta_a^\infty>-\pi.
\end{equation}
Accordingly, there is a sufficiently small constant $\varepsilon_0>0$ such that
\begin{equation}\label{4.2_6}
-\pi
<
\beta_a^\infty-\varepsilon_0
<
\beta_a^\infty+\varepsilon_0
<
\theta_a^\infty-\varepsilon_0
<
\theta_a^\infty+\varepsilon_0
<
0.
\end{equation}
By Lemma \ref{4.1_l1}, together with \eqref{4.1_7} and \eqref{4.2_4}, there exists a uniform constant $\epsilon_0>0$, such that for all sufficiently large $j$, the problem \eqref{Euler}, \eqref{4.2_3} admits a unique piecewise smooth solution
\[
({u}^*_{n_j},{v}^*_{n_j},{c}^*_{n_j})(x,y)
\]
in the domain $\Psi_-^{\,n_j}$ bounded by the characteristic curve $C_+^{\,n_j}$, the free boundary $x=\psi_{n_j}(y)$ and the $C_-$ characteristic issuing from the point $\bigl(x_{n_j}(\epsilon_0 ),y_{n_j}(\epsilon_0 )\bigr)$. Furthermore, the solution admits
\begin{equation}\label{4.2_7}
\left|(\theta,\alpha,\beta)
-(\theta_a^\infty,\alpha_a^\infty,\beta_a^\infty)\right|
\leq \varepsilon_0,
\qquad
\frac{c_a}{2}\leq  c^*_{n_j}\leq c_a,
\end{equation}
and
\begin{equation}\label{4.2_8}
-\frac{2}{\mathcal{K}l}
<
\bar\partial^+c^*_{n_j}
\leq
0,
\qquad
0
\leq
\bar\partial^- c^*_{n_j}
<
\frac{2}{\mathcal{K}l}.
\end{equation}
Let $x=x_{n_j}^-(y)$ denote the $C_-$ characteristic curve issuing from the point
\[
\bigl(x_{n_j}(\epsilon_0 ),\,y_{n_j}(\epsilon_0 )\bigr).
\]
This characteristic intersects the free boundary $x=\psi_{n_j}(y)$ at the point $\bigl(\psi_{n_j}(y_j),\,y_j\bigr)$. Along the characteristic curve, we have
\begin{equation}\label{4.2_9}
    \frac{dx_{n_j}^-(y)}{dy}
    =
    \cot\tilde{\beta}_{n_j}
    \bigl(x_{n_j}^-(y),y\bigr),
    \qquad
    y_j<y<y_{n_j}(\epsilon_0 ).
\end{equation}
On the other hand, the free boundary \eqref{4.2_3} satisfies
\begin{equation}\label{4.2_10}
    \frac{d\psi_{n_j}(y)}{dy}
    =
    \cot{\theta^*}_{n_j}
    \bigl(\psi_{n_j}(y),y\bigr),
    \qquad
    y_j<y<-l.
\end{equation}
Thus, applying Arzelà--Ascoli theorem together with a standard diagonal argument, there exists a constant $\xi>0$, a subsequence $\{n_{j_k}\}_{k=1}^{\infty}\subset\{n_j\}_{j=1}^{\infty}$, functions
\[
x^\infty(s),\,y^\infty(s)\in C[0,\epsilon_0 ],\qquad
x^\infty_{-}(y)\in C[-l-\xi,\,y^\infty(\epsilon_0 )],
\]
\[
\psi^\infty(y)\in C[-l-\xi,\,-l],
\qquad
(u^*, v^*, c^*)\in C^{0,1}(\Psi_-),
\]
such that
\begin{equation}\label{4.2_11}
    y_{j_k}\longrightarrow -l-\xi,
    \qquad\text{as }k\to\infty,
\end{equation}
and
\begin{equation}\label{4.2_12}
\begin{cases}
(x_{n_{j_k}},y_{n_{j_k}})
\longrightarrow
(x^\infty,y^\infty)
&\text{in }C[0,\epsilon_0 ],\\[1mm]
x_{n_{j_k}}^{-}
\longrightarrow
x^\infty_{-}
&\text{in }C[-l-\xi,\,y^\infty(\epsilon_0 )],\\[1mm]
\psi_{n_{j_k}}
\longrightarrow
\psi^\infty
&\text{in }C[-l-\xi,\,-l],\\[1mm]
(u^*_{n_{j_k}},v^*_{n_{j_k}}, c^*_{n_{j_k}})
\longrightarrow
(u^*, v^*, c^*)
&\text{in }C^{0,1}(\Psi_-),
\end{cases}
\qquad\text{as }k\to\infty.
\end{equation}
Here, $\Psi_-$ denotes the domain enclosed by
\[
x= x^\infty(s),\quad y= y^\infty(s),
\qquad  s\in(0, \epsilon_0),
\]
the characteristic curve
\[
x=x^\infty_{-}(y),
\qquad
y\in (-l-\xi, y^\infty(\epsilon_0 )),
\]
and the free boundary
\[
x=\psi^\infty(y),
\qquad
y\in(-l-\xi,-l).
\]
Using symmetry, an analogous solution is obtained in the domain $\Psi_+$, which is enclosed by
\[
x=x^\infty(s),\quad y=-y^\infty(s),
\qquad s\in(0, \epsilon_0),
\]
the characteristic curve
\[
x=x^\infty_{-}(-y),
\qquad
y\in(-y^\infty(\epsilon_0 ), l+\xi),
\]
and the free boundary
\[
x=\psi(-y),
\qquad
y\in (l, l+\xi).
\]
Let
\[
L=\bigl(x^\infty(\epsilon_0 ),\,y^\infty(\epsilon_0 )\bigr),
\qquad
J=\bigl(x^\infty(\epsilon_0 ),\,-y^\infty(\epsilon_0 )\bigr).
\]
Denote by $\widehat{DL}$ and $\widehat{BJ}$ the $C_-$ and $C_+$ characteristic curves connecting $D$ to $L$ and $B$ to $J$, respectively, in the sonic--supersonic jet flow expanding into a vacuum. These two characteristic curves intersect at a point, denoted by $G$.

Finally, let $\Psi_0$ denote the region enclosed by the segment $\overline{BD}$ together with the characteristic arcs $\widehat{GJ}$, $\widehat{GL}$, $\widehat{DJ}$ and $\widehat{BL}$; see Figure~\ref{Pressure_flow}.

Further define
\begin{equation}\label{4.2_13}
    \begin{split}
        (u,v,c)=& (u^*,v^*,c^*)\quad\text{for }(x,y)\in\Psi_-,\\
        (u,v,c)=&(\breve u,\breve v,\breve c)\quad\text{for }(x,y)\in\Psi_0.
    \end{split}
\end{equation}
The solution in $\Psi_+$ is then obtained by symmetry, namely,
\begin{equation}\label{4.2_14}
    (u,v,c)=\bigl(u^*(x,-y),-v^*(x,-y),c^*(x,-y) \bigr),\quad (x,y)\in\Psi_+.
\end{equation}
Here, $(\breve u,\breve v,\breve c)$ denotes the sonic--supersonic jet solution corresponding to the vacuum case constructed in Section~\ref{vacuum_subsection2}.

By construction, the function $(u,v,c)$ defined by \eqref{4.2_13}-\eqref{4.2_14} provides a local sonic--supersonic solution to the free boundary value problem \eqref{Euler}--\eqref{Boundary_p} with $u_0=c_0$. Consequently, there exists a sufficiently small constant $\delta>0$ such that the solution is defined in the domain $\Gamma(\delta)$. This establishes Theorem~\ref{main2}.
\begin{figure}
    \centering
    \includegraphics[width=0.5\linewidth]{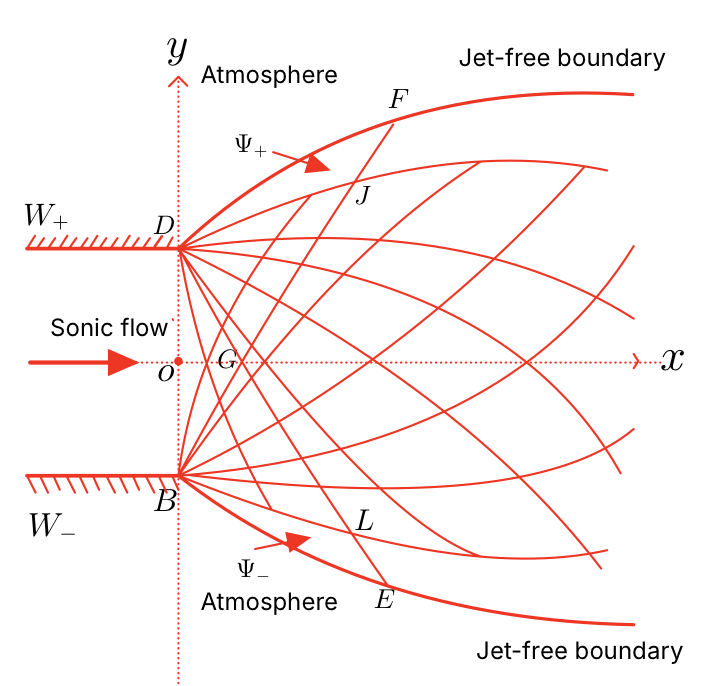}
    \caption {Configuration of the local sonic--supersonic jet flow into an atmosphere from a straight 2-D nozzle and the wave structure together with the free jet boundaries separating the jet from the surrounding atmosphere.}
    \label{Pressure_flow}
\end{figure}
\section{Conclusions}

We investigated sonic-supersonic jet flows issuing from a straight two-dimensional nozzle for compressible fluids governed by the van der Waals equation of state. Owing to the presence of sonic states at the nozzle exit the governing Euler system becomes degenerate hyperbolic which led to a very interesting free boundary value problem. By employing the method of characteristic decomposition together with suitable a priori estimates, we established the global existence of locally Lipschitz continuous sonic-supersonic jet flow solutions expanding into a vacuum. Furthermore, by constructing a family of local supersonic jet flow solutions and passing to the sonic limit through a compactness argument, we proved the local existence of sonic-supersonic jet flow solutions exhausting into a static atmosphere with pressure $0<p_a<p_0$. 

In this work, we extended the existing theory of sonic-supersonic jet flows from the classical polytropic framework to the physically more realistic van der Waals gas model. Although the analysis is carried out under a suitable convexity assumption on the pressure law, many of the analytical techniques developed here rely only on general structural properties of the equation of state and are therefore expected to be applicable to a broader class of convex pressure laws. The results obtained in this article provide a useful foundation for the mathematical analysis of transonic free boundary problems arising in more general nozzle geometries and under different ambient flow conditions.

\section*{Acknowledgements}
\textit{The first author is grateful to the Indian Institute of Technology Kharagpur for the financial support. The second author (TRS) acknowledges the financial support received from SERB, DST, Government of India, under the Core Research Grant (Ref. No. CRG/2022/006297).}


\end{document}